\newtheorem{theorem}{Theorem}[section]
\newtheorem{lemma}[theorem]{Lemma}
\newtheorem{claim}[theorem]{Claim}
\newtheorem*{claim1}{Claim}
\theoremstyle{definition}
\newtheorem{definition}[theorem]{Definition}
\newtheorem{example}[theorem]{Example}
\theoremstyle{remark}
\newtheorem{remark}[theorem]{Remark}
\numberwithin{equation}{section}
\newcommand{\cS}{\mathcal{S}}
\newcommand{\cE}{\mathcal{E}}
\newcommand{\cC}{\mathcal{C}}
\newcommand{\Hh}{\mathbb{H}}
\newcommand{\D}{\mathbb{D}}
\newcommand{\B}{\mathbb{B}}
\newcommand{\C}{\mathbb{C}}
\newcommand{\R}{\mathbb{R}}
\newcommand{\rea}{\operatorname{Re}}
\newcommand{\ima}{\operatorname{Im}}
\newcommand{\Arg}{\operatorname{Arg}}
\renewcommand{\epsilon}{\varepsilon}
\title{Backward iteration in the unit ball}
\author{Olena Ostapyuk}
\address{Kansas State University\\
138 Cardwell Hall Manhattan, KS 66505}
\email{ostapyuk@math.ksu.edu}
\thanks{2010 {\it Mathematics Subject Classification:} 30D05 (primary), 32A40, 32H50 (secondary)}
\thanks{The author thanks her advisor Pietro Poggi-Corradini for his help and advice, and David Drasin and Adrian Jenkins for comments and suggestions.}
\begin{document}
\bibliographystyle{plain}
\begin{abstract}
We will consider iteration of an analytic self-map $f$ of the unit ball in $\C^N$. Many facts
were established about such dynamics in the 1-dimensional case (i.e. for
self-maps of the unit disk), and we will generalize some of them in higher
dimensions. In particular, in the case when $f$ is hyperbolic or elliptic, it will be
shown that backward-iteration sequences with bounded hyperbolic step 
converge to a point on the boundary. These points will be called boundary
repelling fixed points and will possess several nice properties. At each isolated boundary repelling fixed point we will also
construct a (semi) conjugation of $f$ to an automorphism via an analytic
intertwining map. We will finish with some new examples.

\end{abstract}
\maketitle
\baselineskip=18pt
\begin{section}{\bf Introduction}
\begin{subsection}{One-dimensional case}
\begin{subsubsection}{Forward iteration}
Let $f$ be an analytic self-map of the unit disk $\D$. Denote $f_n=f^{\circ n}$ and consider the sequence of forward iterates $z_n=f_n(z_0)$. By Schwarz's lemma, $f$ is a contraction of the  pseudo-hyperbolic metric, so the sequence $d(z_n,z_{n+1})$ is decreasing, where 
\[
d(z,w):=\left|\frac{z-w}{1-\overline{w}z}\right|,\hspace{0.1 in}\forall z,w\in\D.
\]

\begin{theorem}[Denjoy-Wolff]
If $f$ is not an elliptic automorphism, then there exists a unique point $p\in\overline{\D}$ (called the Denjoy-Wolff point of $f$) such that the sequence of iterates $\{f_n\}$ converges to $p$ uniformly on compact subsets of $\D$.
\end{theorem}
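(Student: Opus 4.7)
The plan is to split the argument into two cases according to whether $f$ has a fixed point in $\D$. In each case I first identify the candidate Denjoy-Wolff point $p$ and then upgrade this to locally uniform convergence via a normal family argument anchored on Schwarz-Pick.

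In the interior-fixed-point case, suppose $f(q)=q$ for some $q\in\D$. Conjugating by a disk automorphism I may take $q=0$, so Schwarz's lemma gives $|f(z)|\le|z|$, with equality at some $z\ne 0$ only if $f$ is a rotation. A rotation of $\D$ is an elliptic automorphism fixing $0$, which the hypothesis rules out, so $|f(z)|<|z|$ for all $z\ne 0$. Consequently $|f_n(z)|$ decreases monotonically to some $\rho(z)\ge 0$. If $g$ is the locally uniform limit of a subsequence $\{f_{n_k}\}$, then $g$ is analytic with $g(0)=0$, $|g(z)|=\rho(z)$, and since $f_{n_k+1}\to f\circ g$ as well, $|f(g(z))|=\rho(z)=|g(z)|$. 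Combined with $|f(w)|<|w|$ for $w\ne 0$, this forces $g\equiv 0$, so the full sequence $\{f_n\}$ converges to $0$ uniformly on compacta.

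In the case when $f$ has no fixed point in $\D$, I would locate the candidate boundary point via the classical dilation trick. For each $r\in(0,1)$ the map $f_r(z):=rf(z)$ sends $\D$ into the compact set $r\overline{\D}\subset\D$, hence is a strict contraction of the pseudo-hyperbolic metric and has a unique fixed point $z_r\in\D$. Any interior accumulation point of $\{z_r\}$ as $r\to 1$ would be a fixed point of $f$, so every accumulation point lies on $\partial\D$; pick one and call it $p$. The decisive step is Wolff's lemma: applying Schwarz-Pick to $f_r$ centered at $z_r$,
\[
\left|\frac{f_r(z)-z_r}{1-\bar z_r f_r(z)}\right|\le\left|\frac{z-z_r}{1-\bar z_r z}\right|,
\]
and passing to the limit $r\to 1$ (using $z_r\to p$, $|z_r|\to 1$) yields the horocycle invariance
\[
\frac{|1-\bar p\, f(z)|^2}{1-|f(z)|^2}\le\frac{|1-\bar p z|^2}{1-|z|^2},\qquad z\in\D,
\]
so every horocycle $E(p,R)=\{z:|1-\bar p z|^2<R(1-|z|^2)\}$ is $f$-invariant and therefore $f_n$-invariant for every $n$. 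A normal family argument then pins down the limit: any subsequential limit $g$ of $\{f_n\}$ sends each compactum $K\subset\D$ into the closed horocycle $\overline{E(p,R_K)}$; the assumption that $f$ has no interior fixed point rules out $g$ being a non-constant self-map of $\D$ (otherwise an iterate of $f$ would fix a point of $g(\D)$, and a Schwarz-Pick argument on the cycle produces a fixed point of $f$), so $g$ must be the constant $p$.

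The main obstacle is Wolff's lemma together with the exclusion of non-constant normal limits in the second case. Extracting the horocycle inequality from Schwarz-Pick for $f_r$ requires care in taking $r\to 1$, and ruling out non-constant limits uses the subtle fact that the hyperbolic steps $d(f_n(z),f_{n+1}(z))$ decrease monotonically, so periodic behaviour of a limit would promote to a genuine interior fixed point of $f$. Once horocycle invariance is in place, uniqueness of $p$ follows automatically: two distinct boundary points cannot simultaneously have all their horocycles preserved by the same non-automorphism $f$, since their horocycle families would foliate $\D$ incompatibly.
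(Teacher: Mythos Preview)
The paper does not prove the Denjoy--Wolff theorem: it is quoted in the introduction as classical background, with no argument given. So there is no ``paper's own proof'' to compare against; your sketch is being measured only against the standard literature.

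As a sketch of the classical proof, your outline is essentially correct and follows the usual route (Schwarz's lemma in the interior case; the dilation $f_r=rf$, Wolff's horocycle lemma, and a normal-families argument in the boundary case). Two places would need tightening in a full write-up. First, in Case~1 you should note explicitly that the interior fixed point is unique: if $f$ fixed two distinct points, equality in Schwarz--Pick would force $f$ to be an automorphism, hence elliptic, contrary to hypothesis. Second, in Case~2 your mechanism for excluding a non-constant normal limit $g$ is stated a bit loosely (``an iterate of $f$ would fix a point of $g(\D)$'' is not quite how the contradiction arises). The clean version is: if $g=\lim f_{n_k}$ is non-constant, then along a further subsequence $f_{n_{k+1}-n_k}\to h$ with $h\circ g=g$, so $h=\mathrm{id}$ on the open set $g(\D)$, hence $h=\mathrm{id}$; but then the monotone sequence $d(f_n(z_0),f_{n+1}(z_0))$ is forced to be constant, equality in Schwarz--Pick makes $f$ an automorphism, and a fixed-point-free disk automorphism cannot have iterates converging to the identity. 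You already flag the decreasing hyperbolic step as the key ingredient, so this is a matter of phrasing rather than a genuine gap.
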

Consider first the case $p\in\partial\D$. It can be shown that $f(p)=p$ and $f^{\prime}(p)=c\leq 1$ in the sense of non-tangential limits, and the point $p$ can thus be called "attracting". More geometrically, Julia's lemma holds for the point $p$, i.e.
\begin{align}
\forall R>0\hspace{0.5cm}f\left(H(p,R)\right)\subseteq H(p,cR),\label{Julia1dim}
\end{align}
where $H(p,R)$ is a horocycle at $p\in\partial\D$ of radius $R$ (see Figure \ref{fig:Julia}),
\[
H(p,R):=\left\{z\in\D:\frac{|p-z|^2}{1-|z|^2}<R\right\}.
\]
\begin{figure}[ht]
	\centering
		\includegraphics[width=0.50\textwidth]{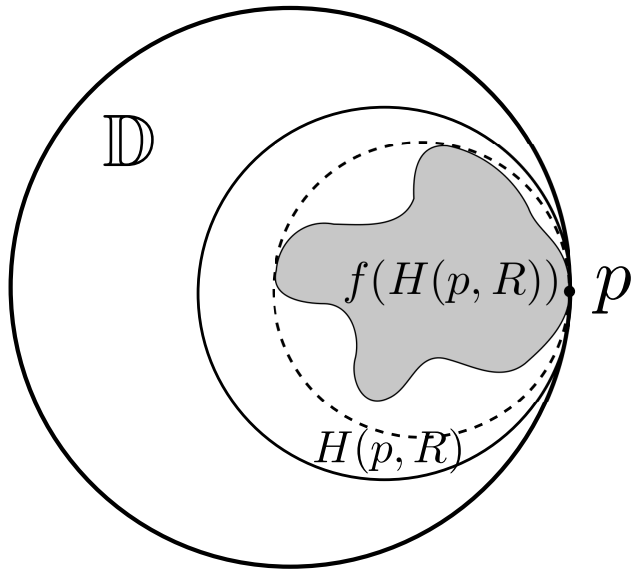}
	\caption{Julia's lemma at the Denjoy-Wolff point $p\in\partial\D$.}
	\label{fig:Julia}
\end{figure}

Here $c=f^{\prime}(p)$ is the smallest $c$ such that (\ref{Julia1dim}) holds. We will call it the {\sf multiplier} or the {\sf dilatation coefficient} and we will distinguish the {\sf hyperbolic} ($c<1$) and {\sf parabolic} ($c=1$) cases.

In the hyperbolic case, Valiron \cite{Va} showed that there is an analytic map $\psi:\D\to\Hh$ (where $\Hh$ is the right half-plane) with some regularity properties, which solves the Schr\"{o}der equation: 
\begin{align}
\psi\circ f=\frac{1}{c}\psi,\label{schforward}
\end{align}
and so $\psi$ conjugates $f$ to multiplication in $\Hh$.

In the parabolic case, $f$ can be conjugated to a shift in a half-plane or in the whole plane, as proved by Pommerenke \cite{Po}, and Baker and Pommerenke \cite{BPo}.

If the Denjoy-Wolff point $p$ is in $\D$, the function $f$ is said to be {\sf elliptic} and the multiplier $c=f^{\prime}(p)$ satisfies $|c|<1$, unless $f$ is an elliptic automorphism. Conjugations for such maps were found by Koenigs \cite{Koe} and B{\"o}ttcher \cite{Bo}.

\end{subsubsection}
\begin{subsubsection}{Backward iteration}
\begin{definition}
We will call a sequence of points $\{z_n\}_{n=0}^{\infty}$ a {\sf backward-iteration sequence} for $f$ if $f(z_{n+1})=z_n$ for $n=0,1,2,\ldots$ . 
\end{definition}
In general, such sequences may not exist. Note that in the backward iteration case the sequence $d(z_n,z_{n+1})$ is increasing, so we will impose an upper bound on the pseudo-hyperbolic step:
\begin{align}
d(z_n,z_{n+1})\leq a, \hspace{0.1in} \forall n,\label{boundedstep}
\end{align}
for some fixed $a<1$.

This condition is nontrivial, for an example of a map that admits a backward-iteration sequence with unbounded steps, see section 2 of \cite{PPC4}.

A backward-iteration sequence satisfying (\ref{boundedstep}) must converge to a point on the boundary of $\D$:
\begin{theorem}[Poggi-Corradini, \cite{PPC2}]\label{thm:main1dim}
Suppose $f$ is an analytic map with $f(\D)\subseteq\D$ (and not an elliptic automorphism). Let $\{z_n\}_{n=0}^{\infty}$ be a backward-iteration sequence for $f$ with bounded pseudo-hyperbolic steps $d_n=d(z_n,z_{n+1})\uparrow a<1$. Then the following hold:  
\begin{enumerate}
\item{There is a point $q\in\partial\D$ such that $z_n\to q$ as $n$ tends to infinity, and $q$ is a fixed point for $f$ with a well-defined multiplier $f^{\prime}(q)=\alpha<\infty$.}
\item{When $q\neq p$, where $p$ is the Denjoy-Wolff point, then $\alpha>1$, so we can call $q$ a {\sf boundary repelling fixed point}. If $q=p$, then $f$ is necessarily of parabolic type.}
\item{When $q\neq p$, then the sequence $z_n$ tends to $q$ along a non-tangential direction.}
\item{When, in the parabolic case, $q=p$, then $z_n$ tends to $q$ tangentially.}
\end{enumerate}
\end{theorem}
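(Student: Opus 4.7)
My plan is to prove the four conclusions in order, using the contraction properties of the Schwarz--Pick metric together with the Julia--Carath\'eodory theorem.

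First I would establish that the sequence must escape every compact subset of $\D$, i.e.\ $|z_n|\to 1$. Suppose for contradiction that $\{z_n\}$ has a subsequence $z_{n_k}\to w\in\D$. Because $f$ is not an automorphism, on any compact set $K\subset\D$ containing the orbit $f|_K$ is a \emph{strict} contraction of the pseudo-hyperbolic metric: there is $\lambda_K<1$ with $d(f(u),f(v))\le \lambda_K\,d(u,v)$ for $u,v\in K$. Applied with $u=z_{n+1}$, $v=z_{n+2}$, this forces $d_{n+1}\ge d_n/\lambda_K$, so the steps grow without bound, contradicting $d_n\le a<1$. (If $f$ is an automorphism one handles that case separately, since by hypothesis it is not elliptic.) Therefore $|z_n|\to 1$.

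Next I would show the full sequence converges to a single boundary point $q$. The hyperbolic step bound together with $|z_n|\to 1$ forces the Euclidean step to satisfy $|z_n-z_{n+1}|\le C(1-|z_n|)\to 0$, and more importantly, if $\rho$ denotes the hyperbolic metric, then $\rho(z_n,z_{n+1})\le\rho_0:=\tfrac12\log\tfrac{1+a}{1-a}<\infty$. If $z_{n_k}\to q$ and $z_{m_j}\to q'$ with $q\neq q'$, then for each pair of indices the hyperbolic geodesic from $z_{n_k}$ to $z_{m_j}$ (with $n_k<m_j$) has length bounded by $(m_j-n_k)\rho_0$, while any curve from a neighborhood of $q$ to a neighborhood of $q'$ must cross a definite hyperbolic ``corridor'' that sees each $z_n$ only finitely often; a standard horocyclic--Stolz argument then produces a contradiction. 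This gives $z_n\to q\in\bd\D$.

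Third, and this is the step I expect to be the main obstacle, I would prove that $q$ is a boundary fixed point of $f$ with a finite multiplier $\alpha=f'(q)$. Because $f(z_{n+1})=z_n$ with both sides tending to $q$, any well-defined radial or non-tangential limit of $f$ at $q$ must equal $q$. The existence of such a limit (and of $f'(q)$) is precisely Julia--Carath\'eodory; to invoke it one needs
\[
\liminf_{z\to q}\frac{1-|f(z)|}{1-|z|}<\infty.
\]
Evaluating along $z=z_{n+1}$, this ratio equals $(1-|z_n|)/(1-|z_{n+1}|)$. The bound $d(z_n,z_{n+1})\le a$ controls this ratio: writing out $|M_{z_n}(z_{n+1})|\le a$ and expanding yields two-sided bounds on $(1-|z_{n+1}|)/(1-|z_n|)$. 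Hence the Julia quotient is bounded, $q$ is a fixed point with a finite multiplier $\alpha\in(0,\infty)$, and by Julia's inequality $\alpha\ge (1-|z_n|)/(1-|z_{n+1}|)$ for all $n$, so $\alpha\ge 1$.

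Finally I would separate the cases $q\neq p$ and $q=p$. If $q$ coincided with the Denjoy--Wolff point $p\in\bd\D$ and $f$ were hyperbolic (so $\alpha_p<1$ at $p$), then Julia's lemma applied at $p$ would force the backward iterates' horocyclic heights to \emph{decrease} geometrically, contradicting the bounded step; so either $f$ is parabolic or $q\neq p$. When $q\neq p$, applying Julia's lemma at $q$ to the forward map while knowing $p$ attracts forward orbits forces $\alpha>1$ (otherwise $q$ would compete with $p$ as a Denjoy--Wolff point); so $q$ is a boundary repelling fixed point. Items (3) and (4) then follow by inserting the sequence into the horocycle geometry at $q$: when $\alpha>1$ the horocycle inclusions force the approach to be non-tangential (the $z_n$ lie in a fixed Stolz angle), whereas in the parabolic $q=p$ case the horocycles are preserved and standard parabolic estimates (Pommerenke) force tangential approach.
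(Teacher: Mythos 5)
Note first that the paper does not actually prove this theorem---it is cited from Poggi-Corradini \cite{PPC2}---so the only internal point of comparison is the paper's $N$-dimensional generalization, Theorem \ref{thm:main}, proved in Section 2, which mirrors the one-dimensional argument. Your general framework (Schwarz--Pick strict contraction on compacta, escape to the boundary, Julia--Carath\'eodory at $q$, horocycle geometry) is the correct one, but two of your four steps are not proofs.

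The convergence of the full sequence to a \emph{single} boundary point (your second paragraph) is not established. You assert that a ``standard horocyclic--Stolz corridor argument'' rules out two distinct subsequential boundary limits, but you supply no such argument, and it is far from clear what it would be: the bound $\rho(z_n,z_{n+1})\le\rho_0$ on the hyperbolic step by itself does not prevent the orbit from oscillating between neighborhoods of two boundary points in longer and longer excursions. What actually closes this gap---both in the paper's proof of Theorem \ref{thm:main} and in \cite{PPC2}---is a quantitative decay estimate: Julia's lemma at the Denjoy--Wolff point (hyperbolic case) or a growth lemma of the type of Lemma \ref{lemma:elliplem} (elliptic case) gives $1-|z_n|\le C c^n$ for some $c<1$, and then the bounded pseudo-hyperbolic step forces $|z_n-z_{n+1}|\le C'(1-|z_n|)$, so the Euclidean displacements are summable and $\{z_n\}$ is Cauchy. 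Without an estimate of this kind your step 2 has a genuine hole.

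Your final case analysis also contains an error. You claim $q=p$ with $f$ hyperbolic is impossible because Julia's lemma at $p$ would force horocyclic heights to decrease geometrically, ``contradicting the bounded step.'' There is no contradiction. Julia's lemma at $p$ gives $R_{n+1}\ge R_n/c$ for the horocycle levels $R_n=\frac{|p-z_n|^2}{1-|z_n|^2}$ along the backward orbit (equivalently $1-|z_{n+1}|\lesssim c\,(1-|z_n|)$ in the half-plane picture), while the bounded step gives $R_{n+1}\le\frac{1+a}{1-a}R_n$; these are simultaneously satisfiable whenever $c\ge\frac{1-a}{1+a}$, which is automatic for any backward orbit with step bounded by $a$. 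The correct route to $q\ne p$ in the hyperbolic case is the one in the paper: the multiplier at $q$ satisfies $\alpha\ge 1/c>1$ (the lower bound (\ref{calpha})), whereas the multiplier at $p$ is $c<1$; alternatively, the summability estimate above already shows the orbit converges to a finite boundary point in the half-plane model while $p$ is at infinity. Finally, you give no proof of parts (3) or (4): ``standard parabolic estimates (Pommerenke)'' is a citation rather than an argument, and the non-tangential (Stolz-angle) approach in (3) is itself a quantitative statement, $|1-\bar q z_n|\le C(1-|z_n|)$, that must be extracted from the same summable telescoping estimate you have not yet established.
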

In this case Julia's lemma holds for the point $q$ with multiplier $\alpha>1$:
\begin{align}
\forall R>0\hspace{0.5cm}f\left(H(q,R)\right)\subseteq H(q,\alpha R),\label{Juliaalpha}
\end{align}
where $\alpha$ is the smallest number such that this holds.

For backward iteration, the following conjugation result was obtained in \cite{PPC1}:
\begin{theorem}[Poggi-Corradini]\label{thm:conj1dim}
Suppose $f$ is an analytic self-map of the unit disc $\D$ and 1 is a boundary repelling fixed point for $f$ with multiplier $1<\alpha<\infty$. Let $a=(\alpha-1)/(\alpha+1)$ and $\eta(z)=(z-a)/(1-az)$. Then there is an analytic map $\psi$ of $\D$ with $\psi(\D)\subseteq\D$, which has non-tangential limit 1 at 1, such that
\begin{align}\label{conjugation1dim}
\psi\circ\eta(z)=f\circ\psi(z),
\end{align}
for all $z\in\D$.
\end{theorem}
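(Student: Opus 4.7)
The plan is to realize $\psi$ as a subsequential limit of normalized forward iterates of $f$, where the normalizing automorphisms track a backward-iteration orbit that accumulates at the BRFP $q = 1$. First, I would produce a backward-iteration sequence $\{z_n\}_{n=0}^\infty$ for $f$ with $z_n \to 1$: since $1$ is a BRFP with multiplier $\alpha > 1$, a local univalent inverse branch of $f$ is available in an approach region at $1$, and iterating it yields such a sequence with bounded pseudo-hyperbolic steps. By Theorem~\ref{thm:main1dim}, $d_n := d(z_n, z_{n+1}) \uparrow a^*$ for some $a^* < 1$, and a direct calculation with Julia's lemma~(\ref{Juliaalpha}) identifies $a^* = (\alpha-1)/(\alpha+1)$, which is also the pseudo-hyperbolic translation step of $\eta$ along its axis in $\D$. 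Next, for each $n$ I would let $T_n$ be the unique element of $\Aut(\D)$ satisfying $T_n(0) = z_n$ and $T_n(1) = 1$, and set $\psi_n := f_n \circ T_n$. Each $\psi_n$ maps $\D$ into $\D$, so $\{\psi_n\}$ is a normal family; by Montel I extract a locally uniform subsequential limit $\psi = \lim_k \psi_{n_k}$. Note $\psi_n(0) = f_n(z_n) = z_0$, so in particular $\psi(0) = z_0$.

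To verify the functional equation I would compare
\[
f \circ \psi_n = f_{n+1} \circ T_n, \qquad \psi_{n+1} \circ \eta = f_{n+1} \circ (T_{n+1} \circ \eta),
\]
and, using that $f_{n+1}$ contracts the pseudo-hyperbolic metric, reduce the problem to showing $d(T_n(w), T_{n+1}(\eta(w))) \to 0$ uniformly on compact subsets of $\D$. Both $T_n$ and $T_{n+1}\circ\eta$ are automorphisms of $\D$ fixing $1$, so the claim is that the Möbius self-maps $T_{n+1}^{-1}\circ T_n \circ \eta^{-1}$ converge to the identity. At $w = 0$ the discrepancy $d(z_n, T_{n+1}(-a))$ vanishes in the limit because $d(z_n, z_{n+1}) \to a = d(0,-a)$ together with the non-tangential approach of $\{z_n\}$ to $1$ from Theorem~\ref{thm:main1dim}(3) forces $T_{n+1}^{-1}(z_n) \to -a$; the condition at the fixed point $1$ is built in by construction. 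Passing to the subsequential limit then yields $\psi \circ \eta = f \circ \psi$. The non-tangential limit $\psi(1) = 1$ would be inherited from $\psi_n(1) = f_n(T_n(1)) = f_n(1) = 1$ by a Lindel\"of-type argument combined with normality.

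The hard part will be ruling out the trivial limit $\psi \equiv p$, where $p$ is the Denjoy-Wolff point of $f$. Here the normalization $T_n(1) = 1$ is essential: a short computation gives the Julia-quotient identity $T_n'(1) = |1-z_n|^2/(1-|z_n|^2)$, and by iterating Julia's lemma~(\ref{Juliaalpha}) along the backward orbit, this quotient should decay at the sharp rate $\alpha^{-n}$, which matches exactly the contraction rate $|f_n'(z_n)| \asymp \alpha^n$ of $f_n$ at $z_n$ (the reverse Schwarz-Pick estimate available because $\{z_n\}$ is a bounded-step backward orbit approaching $1$ non-tangentially). These matching rates produce a uniform two-sided bound on $|\psi_n'(0)|$, bounded away from both $0$ and $\infty$, which forces $\psi$ to be non-constant and takes values in $\D$ with $\psi(0) = z_0$. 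Establishing this two-sided derivative bound — and hence non-constancy — is the crux of the argument.
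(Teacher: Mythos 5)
Your overall strategy---build a backward-iteration sequence $\{z_n\}$ tending to the BRFP, renormalize the forward iterates $f_n$ by disk automorphisms tied to that sequence, and take a normal-family limit---is the same one the paper uses in Section~4 for the $N$-dimensional version (Theorem~\ref{thm:mainconjN}), and the same strategy as \cite{PPC1} for this one-dimensional statement. Your normalization $T_n(0)=z_n$, $T_n(1)=1$ is a legitimate variant of the paper's: in the half-plane model the paper's $\tau_n$ is the dilation-translation sending $1\mapsto z_n$ and fixing $\infty$ (i.e.\ fixing the boundary point $-1\in\partial\D$), while yours fixes the BRFP. Both choices are determined by one interior and one boundary condition and both make $T_{n+1}^{-1}\circ T_n\circ\eta^{-1}\to\mathrm{Id}$, since an automorphism of $\D$ fixing $1$ and sending $0$ to a point tending to $0$ tends to the identity; so this difference is cosmetic. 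The two places you gloss over, however, are exactly the two that carry the hard content.

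First, the existence of the backward orbit. Producing a sequence with $f(z_{n+1})=z_n$, $z_n\to 1$, and $d(z_n,z_{n+1})\le a=(\alpha-1)/(\alpha+1)$ is not a consequence of ``a local univalent inverse branch of $f$.'' Nothing in the hypotheses forces $f$ to be locally injective near $1$: the Julia--Wolff--Carath\'eodory theorem gives $f'(z)\to\alpha$ only non-tangentially, which does not by itself give univalence in any approach region. The paper's Lemma~\ref{lemma:mainlemma} (and Lemma~1.4 of \cite{PPC1}, to which it defers for the final steps) constructs the orbit by a quite different mechanism: show $d(r_k,f(r_k))\to a$ along the radius, push the segments $[r_k,f(r_k)]$ forward under the iterates until they exit a fixed neighborhood $D$ of $1$, and extract a diagonal limit. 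This is a genuine lemma and must be proved; as written your argument is missing it.

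Second, non-constancy of $\psi$. The two-sided derivative bound you propose is the weak link. The upper bound $|\psi_n'(0)|\le 1-|z_0|^2$ is just Schwarz--Pick, but a lower bound $|\psi_n'(0)|\ge\epsilon>0$ amounts to $\prod_{j=1}^{n}\bigl(|f'(z_j)|/\alpha\bigr)$ being bounded away from $0$, i.e.\ a summable rate for $|f'(z_j)|\to\alpha$. This does not follow from Julia's lemma; the analogue of (\ref{tlimit}), namely $t_n/t_{n+1}\to\alpha$, does not imply $t_n\alpha^n$ converges, and there is no elementary ``reverse Schwarz--Pick'' giving the asserted bound. The paper avoids derivatives entirely: from $T_n^{-1}\circ T_{n+k}\to\eta^{-k}$ (the analogue of Lemma~\ref{taulemma}~(1)) one gets $T_n^{-1}(z_{n+k})\to\eta^{-k}(0)$, and then by Schwarz--Pick equicontinuity
\[
\psi\bigl(\eta^{-k}(0)\bigr)=\lim_{j\to\infty}\psi_{n_j}\bigl(T_{n_j}^{-1}(z_{n_j+k})\bigr)=\lim_{j\to\infty}f_{n_j}(z_{n_j+k})=z_k.
\]
Hence $\psi$ takes the distinct interior values $z_0,z_1,\dots$ and is automatically a non-constant self-map of $\D$; moreover $\eta^{-k}(0)\to 1$ along the radius while $\psi(\eta^{-k}(0))=z_k\to 1$, which feeds directly into the Lindel\"of-type argument you allude to for $\psi(1)=1$. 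You already prove the key convergence in your step establishing the functional equation; using it pointwise at the orbit $\eta^{-k}(0)$ is both shorter than the derivative estimate and actually closes the gap.
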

\end{subsubsection}
\end{subsection}
\begin{subsection}{Unit ball in $\C^N$.}
\begin{subsubsection}{Preliminaries.}
Consider the N-dimensional unit ball $\B^N=\left\{Z\in\C^N: \|Z\|<1\right\}$,\linebreak where the inner product and the norm are defined as 
\[
(Z,W)={\displaystyle\sum^{N}_{j=1}}Z_j \overline{W_j}\hspace{0.5cm}\mbox{and}\hspace{0.5cm}\|Z\|^2=(Z,Z).
\]
Schwarz's lemma still holds for a self-map $f$ of the unit ball, i.e. $f$ must be a contraction in the Bergmann metric $k_{\B^N}$ (Corollary (2.2.18) from \cite{Abate}). For simplicity of computations, we will use the pseudo-hyperbolic metric $d_{\B^N}$ in $\B^N$, which is related to the Bergmann metric by
\[
d_{\B^N}(Z,W)=\tanh(k_{\B^N}(Z,W))\hspace{0.5 cm}\forall Z,W\in\B^N.
\]
The pseudo-hyperbolic metric satisfies $d_{\B^N}(Z,0)=\left\|Z\right\|$ and is preserved by every automorphism of $\B^N$, thus one can derive that
\begin{align}
d^2_{\B^N}(Z,W)=1-\frac{(1-\left\|Z\right\|^2)(1-\left\|W\right\|^2)}{\left|1-(Z,W)\right|^2}\hspace{0.5cm}\forall Z,W\in\B^N.\label{dBN}
\end{align}
We also have the following generalization of Julia's lemma:
\begin{theorem}[Theorem (2.2.21) from \cite{Abate}]\label{thm:JuliaN}
Let $f:\B^N\to\B^N$ be a holomorphic map and take $X\in\partial\B^N$ such that
\begin{align}
\liminf_{Z\to X}\frac{1-\|f(Z)\|}{1-\|Z\|}=\alpha<\infty.\label{multalpha}
\end{align}
Then there exists a unique $Y\in\partial\B^N$ such that 
\[
\forall R>0\hspace{0.5cm}f\left(H(X,R)\right)\subseteq H(Y,\alpha R),
\]
where $H(X,R)$ is a horosphere (the N-dimensional generalization of a horocycle), defined as
\[
H(X,R):=\left\{Z\in\B^N:\frac{|1-(Z,X)|^2}{1-\|Z\|^2}<R\right\}.
\]
\end{theorem}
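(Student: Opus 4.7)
The approach is to imitate the one-dimensional Julia lemma, combining Schwarz's lemma for the pseudo-hyperbolic metric $d_{\B^N}$ with the explicit formula (\ref{dBN}). The boundary point $Y$ will be produced as a limit of $f$ along a sequence realizing the $\liminf$, and the horosphere inclusion will emerge by rearranging the contraction inequality and letting the index tend to infinity.

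First I would choose a sequence $\{Z_k\}\subset\B^N$ with $Z_k\to X$ along which $(1-\|f(Z_k)\|)/(1-\|Z_k\|)\to\alpha$. Since $\alpha<\infty$ and $\|Z_k\|\to 1$, this forces $\|f(Z_k)\|\to 1$, and by compactness of $\overline{\B^N}$ one can pass to a subsequence along which $f(Z_k)\to Y$ for some $Y\in\partial\B^N$. This $Y$ will be my candidate.

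Then, for any fixed $Z\in\B^N$, Schwarz's lemma applied to the pair $(Z,Z_k)$, rewritten via (\ref{dBN}) and with the $k$-dependent data isolated on the right, reads
\[
\frac{|1-(f(Z),f(Z_k))|^2}{1-\|f(Z)\|^2}\;\le\;\frac{1-\|f(Z_k)\|^2}{1-\|Z_k\|^2}\cdot\frac{|1-(Z,Z_k)|^2}{1-\|Z\|^2}.
\]
Letting $k\to\infty$, the prefactor tends to $\alpha$ (split $1-\|\cdot\|^2=(1-\|\cdot\|)(1+\|\cdot\|)$ and use that both norms tend to $1$), while continuity of the inner product converts the remaining factors into $|1-(f(Z),Y)|^2/(1-\|f(Z)\|^2)$ and $|1-(Z,X)|^2/(1-\|Z\|^2)$. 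This is exactly $Z\in H(X,R)\Rightarrow f(Z)\in H(Y,\alpha R)$.

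For uniqueness of $Y$, I would specialize the resulting inequality to $Z=rX$ with $r\uparrow 1$: the right-hand side equals $\alpha(1-r)/(1+r)\to 0$, which confines $f(rX)$ to horospheres at $Y$ of shrinking radius, forcing $f(rX)\to Y$. Thus $Y$ equals the radial limit of $f$ at $X$, independent of any subsequence choices. The step I expect to require the most care is precisely this limit passage in the displayed inequality: one must choose $\{Z_k\}$ so the prefactor tends exactly to $\alpha$ rather than to a smaller accumulation value, and one must ensure the cluster point $Y$ lies on $\partial\B^N$ rather than strictly inside---both are secured by the specific choice of $\{Z_k\}$ realizing the $\liminf$ in the hypothesis.
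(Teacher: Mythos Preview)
Your argument is correct and is in fact the standard proof of the multidimensional Julia lemma (as given, e.g., in Rudin's \emph{Function Theory in the Unit Ball of $\C^n$} and in Abate's book). Note, however, that the paper does not supply its own proof of Theorem~\ref{thm:JuliaN}: the result is quoted from \cite{Abate} as background material, so there is no in-paper argument to compare against. The route you outline---realize the $\liminf$ along a sequence, extract a boundary cluster point $Y$ of $f(Z_k)$, rewrite the Schwarz--Pick contraction via (\ref{dBN}) and pass to the limit, then recover uniqueness from the radial limit $f(rX)\to Y$---is exactly the classical one and would reproduce the proof in the cited reference.
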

And a version of the Denjoy-Wolff theorem also holds:
\begin{theorem}[MacCluer, \cite{McCl}]\label{thm:DWN}
Let $f:\B^N\to\B^N$ be a holomorphic map without fixed points in $\B^N$. Then the sequence of iterates $\{f_n\}$ converges uniformly on compact subsets of $\B^N$ to the constant map $Z\mapsto p$ for a (unique) point $p\in\partial\B^N$ (called the Denjoy-Wolff point of $f$); and the number
\begin{align}
c:=\liminf_{Z\to p}\frac{1-\|f(Z)\|}{1-\|Z\|}\in(0,1]\label{multiplier}
\end{align}
is called the multiplier or the boundary dilatation coefficient of $f$ at $p$.
\end{theorem}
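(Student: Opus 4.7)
The plan is to run the classical Denjoy-Wolff scheme, adapted to $\B^N$: first show the forward orbit escapes to $\partial\B^N$, then identify a common limit point via Julia's lemma (Theorem \ref{thm:JuliaN}), and finally upgrade pointwise convergence to locally uniform convergence via normality.

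Since $\|f_n(Z)\|<1$ for all $n$ and $Z$, the family $\{f_n\}$ is locally uniformly bounded, hence normal by Montel. My first step is to show that for any $Z_0\in\B^N$ the forward orbit $z_n=f_n(Z_0)$ cannot accumulate in the interior. Suppose some subsequence $f_{n_k}(Z_0)\to q\in\B^N$. Passing to a further subsequence, $f_{n_k}\to h$ locally uniformly, with $h:\B^N\to\overline{\B^N}$ and $h(Z_0)=q$; the maximum principle then forces $h(\B^N)\subseteq\B^N$. Because $f$ contracts $k_{\B^N}$, the sequence $k_{\B^N}(f_n(Z),f_n(W))$ is non-increasing in $n$ and converges. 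Choosing $m_k=n_{k+1}-n_k\to\infty$ and extracting a convergent subsequence $f_{m_k}\to g$, one checks $g\circ h=h$, which via a Cartan-type argument forces $f$ to fix the interior point $h(Z_0)$, contradicting the hypothesis. Hence $\|z_n\|\to 1$.

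Next, let $p\in\partial\B^N$ be any subsequential limit of $\{z_n\}$. Along the orbit, $(1-\|f(Z)\|)/(1-\|Z\|)$ is bounded by Schwarz-Pick, and taking the $\liminf$ as $Z\to p$ yields a number $c\in(0,1]$ (the lower bound from the Schwarz-Pick inequality for the ball). Theorem \ref{thm:JuliaN} then produces $Y\in\partial\B^N$ such that $f(H(p,R))\subseteq H(Y,cR)$ for every $R>0$. Since $z_n\to p$ along a subsequence, the orbit eventually enters every horosphere $H(p,R)$, and $f$-invariance of the horospheres together with $\overline{H(p,R)}\cap\partial\B^N=\{p\}$ forces $Y=p$. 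Thus horospheres at $p$ are $f$-invariant with multiplier $c$, and the full orbit $z_n$ is driven into arbitrarily small horospheres based at $p$, so $z_n\to p$. This argument does not depend on the starting point $Z_0$: the limit point is determined by $f$ alone, because the horosphere inclusion is intrinsic to $f$.

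Finally, normality upgrades this to locally uniform convergence. Any subsequential limit $g$ of $\{f_n\}$ satisfies $g(Z_0)=p\in\partial\B^N$ for every $Z_0$, so by the maximum principle $g\equiv p$. Hence $f_n\to p$ locally uniformly on $\B^N$, and the multiplier $c$ is extracted from (\ref{multiplier}) at that particular boundary point.

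The main obstacle is the first step, namely excluding interior accumulation of a forward orbit. In one complex variable this is a two-line Schwarz-Pick argument: a nested sequence of pseudo-hyperbolic disks around an interior accumulation point collapses to a fixed point of $f$. In $\B^N$ the stabilizer of a point under $\Aut(\B^N)$ is the full unitary group $U(N)$, so there is no scalar contraction ratio to exploit directly; the Cartan-Abate-style argument outlined above is the essential higher-dimensional substitute, and it is where the proof genuinely departs from the disk case.
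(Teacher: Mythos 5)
The paper does not prove this theorem; it is quoted from MacCluer \cite{McCl} and used as a black box, so there is no in-paper proof to compare your attempt against.

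Evaluating your sketch on its own terms: the architecture (escape to the boundary, Julia's lemma at a boundary accumulation point, upgrade by normality and the maximum principle) is the right shape, but step one has a genuine gap and step two a smaller one. From $g\circ h=h$ you can conclude only that $g$ (a subsequential limit of $\{f_{n_{k+1}-n_k}\}$) fixes the interior point $h(Z_0)$, not that $f$ does; passing from ``some limit of iterates has an interior fixed point'' to ``$f$ has an interior fixed point'' is precisely the crux of the higher-dimensional problem, and the phrase ``Cartan-type argument'' is concealing all of it. The standard way to close this gap is the limit-manifold theory for taut manifolds (see Abate \cite{Abate}): if $\{f_n\}$ is not compactly divergent there is a holomorphic retraction $\rho:\B^N\to M$ onto a sub-ball with $f(M)=M$ and $f|_M\in\Aut(M)$, and one then invokes the classification and Denjoy--Wolff behaviour of fixed-point-free automorphisms of a ball. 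That is substantially more than a Schwarz--Pick collapse and needs to be stated as such. In step two, the Schwarz--Pick inequality gives only the lower bound $c>0$; the upper bound $c\le1$ must be obtained by choosing $p$ as the limit of a subsequence along which $(1-\|z_{n+1}\|)/(1-\|z_n\|)$ realizes its $\liminf$, which is $\le1$ because $1-\|z_n\|\to0$ --- an arbitrary subsequence converging to a boundary point need not witness this.

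Finally, your route differs from MacCluer's own. She produces the Wolff point first, by a Brouwer fixed-point argument and without proving compact divergence up front: for $r_k\uparrow1$ the map $Z\mapsto r_kf(r_kZ)$ sends $\overline{\B^N}$ compactly into $\B^N$ and so has a fixed point $p_k$; since $f$ is fixed-point free in $\B^N$, $\|p_k\|\to1$, and a subsequential limit $p\in\partial\B^N$ satisfies $\liminf_{Z\to p}(1-\|f(Z)\|)/(1-\|Z\|)\le1$ because $\|f(r_kp_k)\|\ge\|p_k\|\ge r_k\|p_k\|$. Julia's lemma then provides $f$-invariant horospheres at $p$, the orbit is trapped and forced to $p$, and locally uniform convergence follows by normality as in your last step. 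This route is more elementary and self-contained, avoiding the limit-retraction machinery your step one implicitly relies on.
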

The map $f$ is called {\sf hyperbolic} if $c<1$ and {\sf parabolic} if $c=1$.

Unlike in the one-dimensional case, there may be many fixed points inside the unit ball $\B^N$. Even if the fixed point is unique, forward iterates need not converge to it (consider rotations). We will call a function $f$ {\sf unitary on a slice} if there exist $\zeta$ and $\eta$ in $\partial\B^N$ with $f(\lambda\zeta)=\lambda\eta$ for all $\lambda\in\D$. Functions that are not unitary on any slice are precisely those for which strict inequality occurs in the multidimensional Schwarz lemma and for them forward iterates converge to $0$ (see \cite{CowMacC}).

\begin{definition}
We will call a self-map of the unit ball $f$ {\sf elliptic}, if it has a unique fixed point inside $\B^N$ and it is conjugate via an automorphism to a self-map fixing zero, which is not unitary on any slice.
\end{definition}

In the rest of the paper we will consider only self-maps of the ball that are elliptic, hyperbolic or parabolic.

Sometimes it will be more convenient to use the Siegel domain:
\[
\Hh^N:=\left\{(z,w)\in \C\times\C^{N-1} : \rea z>\|w\|^2\right\},
\]
which is biholomorphic to $\B^N$ via the Cayley transform $\cC: \B^N\to\mathbb{H}^N$:
\[
\cC(z,w)=\left(\frac{1+z}{1-z},\frac{w}{1-z}\right)\ \ \ \mbox{and}\ \ \  \cC^{-1}(z,w)=\left(\frac{z-1}{z+1},\frac{2w}{z+1}\right).
\]

We will use the same notations for the points in $\B^N$ and their images in $\Hh^N$, when this is not likely to cause confusion. We will also denote by $(z,w)$ an $N$-dimensional vector either in $\B^N$ or $\Hh^N$ with $z\in\C$ being the first component and $w\in\C^{N-1}$ being the last $N-1$ components. The pseudo-hyperbolic distance in $\Hh^N$ is defined as
\begin{align}
d^2_{\Hh^N}((z,w),(\tilde{z},\tilde{w})):&=d^2_{\B^N}(\cC^{-1}(z,w),\cC^{-1}(\tilde{z},\tilde{w}))\notag\\&=1-\frac{4(\rea  z-\|w\|^2)(\rea  \tilde{z}-\|\tilde{w}\|^2)}{|z+\bar{\tilde{z}}-2\left\langle w,\tilde{w}\right\rangle|^2}\hspace{0.5cm}\forall (z,w),(\tilde{z},\tilde{w})\in\Hh^N.\label{dHN}
\end{align}
Forward iteration in the unit ball of $\C^N$ in the hyperbolic case was studied in \cite{BG} and \cite{PPC3}. In \cite{PPC3} the Schr\"{o}der equation (\ref{schforward}) was solved with $\psi$ being holomorphic map $\psi:\B^N\to\Hh$ given some additional conditions. In \cite{BG}, $f$ was conjugated to its linear part, assuming some regularity at the Denjoy-Wolff point. Conjugations for elliptic maps were given in \cite{CowMacC}. There are no known results for conjugations of parabolic maps in higher dimensions.
\end{subsubsection}

\begin{subsubsection}{Main results.}
The main goal of this paper is to study backward iterates in the unit ball $\B^N$. The following results are generalizations of Theorem \ref{thm:main1dim} and Theorem \ref{thm:conj1dim} to higher dimensions.

\begin{theorem}\label{thm:main}
Let $f$  be a holomorphic self-map of $\B^N$ of hyperbolic or elliptic type with Denjoy-Wolff point $p$. Let $\{Z_n\}$ be a backward-iteration sequence for $f$ with bounded pseudo-hyperbolic step $d_{\B^N}(Z_n,Z_{n+1})\leq a<1$. Then:
\begin{enumerate}
	\item{There exists a point $q\in\partial\B^N$, $q\neq p$, such that $Z_n\to q$ as $n$ tends to infinity,}
	\item{$\{Z_n\}$ stays in a Koranyi region with vertex $q$,}
	\item{Julia's lemma (\ref{Juliaalpha}) holds for $q$ with a finite multiplier $\alpha\geq \frac{1}{c}$, where $c<1$ is a constant that depends on $f$.}
\end{enumerate}
\end{theorem}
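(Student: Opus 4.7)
My plan is to adapt the one-dimensional argument of Poggi-Corradini recorded in Theorem~\ref{thm:main1dim} to the ball, replacing horocycles by the horospheres $H(X,R)$ of Theorem~\ref{thm:JuliaN} and using the pseudo-hyperbolic formula (\ref{dBN}). First, Schwarz contraction of $d_{\B^N}$ gives $d_n := d_{\B^N}(Z_n,Z_{n+1}) \le d_{n+1}$, so $d_n \uparrow a_0 \le a$, with $a_0>0$ unless $Z_0=p$. Next I would show $\|Z_n\| \to 1$. In the elliptic case, conjugate by an automorphism so that $p=0$; then Schwarz gives $\|Z_n\| \le \|Z_{n+1}\|$, and if the monotone limit $R := \lim\|Z_n\| <1$, any subsequential limit $Z^{\ast}$ would satisfy $\|f(Z^{\ast})\| = \|Z^{\ast}\| = R$, violating the hypothesis that $f$ is not unitary on any slice. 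In the hyperbolic case, Theorem~\ref{thm:JuliaN} at $p$ gives that $u_p(Z) := |1-(Z,p)|^2/(1-\|Z\|^2)$ satisfies $u_p(f(Z)) \le c\,u_p(Z)$, so $u_p(Z_{n+1}) \ge c^{-1} u_p(Z_n)$; since $c<1$, this grows geometrically to $\infty$, which forces $\|Z_n\| \to 1$.

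Once $\|Z_n\| \to 1$, the step bound translated via (\ref{dBN}) reads
\[
|1-(Z_n,Z_{n+1})|^2 \le \frac{(1-\|Z_n\|^2)(1-\|Z_{n+1}\|^2)}{1-a^2} \longrightarrow 0,
\]
so $(Z_n,Z_{n+1}) \to 1$ and hence $\|Z_n-Z_{n+1}\| \to 0$. If $\{Z_n\}$ had two distinct boundary cluster points $q_1 \ne q_2$, one could extract indices $n_k$ for which $Z_{n_k}$ stays near $q_1$ while $Z_{n_k+1}$ leaves a fixed neighborhood of $q_1$; passing to a further subsequence yields limits $q',q'' \in \partial\B^N$ with $q' \ne q''$ and $(q',q'') = 1$, which is impossible for unit vectors in $\C^N$. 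Hence $Z_n \to q$ for a unique $q \in \partial\B^N$, giving (1). In the elliptic case $q \ne p$ is automatic since $p \in \B^N$; in the hyperbolic case the growth $u_p(Z_n) \to \infty$ combined with the step bound rules out $q = p$.

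For (2) and (3), I would revisit the step bound to extract a Koranyi containment $|1-(Z_n,q)|/(1-\|Z_n\|) \le M$ by combining $(Z_n,Z_{n+1}) \to 1$ with the localization $Z_n,Z_{n+1} \to q$. The same manipulations produce a uniform bound on the ratio $(1-\|Z_n\|)/(1-\|Z_{n+1}\|)$, yielding $\liminf_{Z \to q}(1-\|f(Z)\|)/(1-\|Z\|) < \infty$; Theorem~\ref{thm:JuliaN} then applies at $q$ with a finite multiplier $\alpha$. To obtain the sharp lower bound $\alpha \ge 1/c$, I would compare growth rates: Julia at $p$ forces $u_p(Z_n) \sim c^{-n}$, while Julia at $q$ (run backwards) gives $u_q(Z_{n+1}) \ge \alpha^{-1} u_q(Z_n)$, and the pseudo-hyperbolic step constraint forces these two rates to be compatible, yielding $\alpha c \ge 1$.

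The main obstacle I anticipate is the combined estimate in (3): obtaining the sharp lower bound $\alpha \ge 1/c$ requires carefully synchronising the two horosphere inequalities at $p$ and $q$ along the backward orbit, and in $\C^N$ the horospheres are foliated by complex slices and are not rotationally symmetric, making the interplay subtler than in $\D$. A secondary subtlety is excluding $q=p$ in the hyperbolic case: the proof must use the strict inequality $c<1$ decisively, reflecting the fact that in the parabolic case backward iterates \emph{can} converge tangentially to $p$, as Theorem~\ref{thm:main1dim}(4) shows.
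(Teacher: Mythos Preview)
Your argument for part (1) has a genuine gap. From $|1-(Z_n,Z_{n+1})|\to 0$ you correctly deduce $\|Z_n-Z_{n+1}\|\to 0$, but this alone does \emph{not} force convergence: a sequence with vanishing consecutive differences can still have many cluster points (think of $\sin\sqrt{n}$). Your extraction argument fails for exactly this reason: if $U$ is a neighbourhood of $q_1$ and you pick indices with $Z_{n_k}\in U$, $Z_{n_k+1}\notin U$, then $\|Z_{n_k}-Z_{n_k+1}\|\to 0$ forces any subsequential limits $q',q''$ to coincide on $\partial U$, so you cannot reach the desired contradiction $(q',q'')=1$ with $q'\ne q''$.

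What is missing is a quantitative rate. The paper first establishes \emph{geometric decay} of the horospheric height: in the hyperbolic case, with the Denjoy--Wolff point sent to $\infty$ in the Siegel domain and $t_n:=\rea z_n-\|w_n\|^2$, Julia's lemma gives $t_{n+k}\le c^k t_n$; in the elliptic case an analogous bound $t_{n+1}\le c\,t_n$ with $t_n=1-\|Z_n\|$ comes from a preliminary lemma showing $(1-r)/(1-\max_{\|Z\|=r}\|f(Z)\|)\le c<1$ uniformly. The bounded-step hypothesis then yields $\|pr(Z_n)-pr(Z_{n+1})\|\le \tilde C\sqrt{t_n}$ for a suitable boundary projection, and summing the geometric series $\sum\sqrt{t_n}$ makes $\{pr(Z_n)\}$ genuinely Cauchy. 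This same decay is what drives the Koranyi estimate in (2) (one needs $|1-(Z_n,q)|\lesssim t_n$, not merely $o(1)$) and, in the elliptic case, supplies the constant $c$ in the bound $\alpha\ge 1/c$. Your sketch for (2) and for the lower bound in (3) is too vague to assess, but without the geometric decay of $t_n$ neither can be made to work; incidentally, the paper obtains $\alpha c\ge 1$ in the hyperbolic case not by comparing growth rates along the orbit but by a clean static argument, trapping $f(0)$ between the two horospheres $\overline{H(p,c)}$ and $\overline{H(q,\alpha)}$.
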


\begin{remark}
In the hyperbolic case, $c$ is the multiplier at $p$,  see (\ref{multiplier}).
\end{remark}

Because of the last statement of the Theorem (\ref{thm:main}), the multiplier $\alpha>1$, and thus we can introduce the following

\begin{definition}\label{def:brfp}
The point $q\in\partial\B^N$ is called a boundary repelling fixed point (BRFP) for $f$, if (\ref{Juliaalpha}) holds for some $\alpha>1$.
\end{definition}

\begin{remark}
It follows from Julia's lemma (Theorem \ref{thm:JuliaN}) that the above definition of multiplier is equivalent to (\ref{multalpha}).
\end{remark}

\begin{remark}
It follows from (\ref{Juliaalpha}) that $q$ also is a boundary fixed point with respect to $K$-limits and, consequently, non-tangential limits (see the proof of Theorem (2.2.29) in \cite{Abate}).
\end{remark}

\begin{definition}\label{def:Koranyi}
The Koranyi region $K(q,M)$ of vertex $q\in\partial\B^N$ and amplitude $M>1$ is the set
\begin{align}
K(q,M)=\left\{Z\in\B^N:\frac{|1-(Z,q)|}{1-\|Z\|}<M\right\}.
\end{align}
\end{definition}

Koranyi regions are natural generalizations of the Stolz regions in $\D$ and can be used to define $K$-limits:

\begin{definition}
We will say that function $f$ has K-limit $\lambda$ at $q\in\partial\B^N$ if for any $M>1$ $f(Z)\to\lambda$ as $Z\to q$ within $K(q,M)$.
\end{definition}

In one dimension this is exactly the non-tangential limit, while when $N>1$ the approach is restricted to be non-tangential only in the the radial dimension, see \cite{Abate}.

\begin{theorem}\label{thm:mainconjN}
Suppose $f$ is an analytic function of $\Hh^N$ with $f(\Hh^N)\subseteq\Hh^N$ and $0$ is a boundary repelling fixed point for $f$ with multiplier $1<\alpha<\infty$, isolated from other boundary repelling fixed points with multipliers less or equal to $\alpha$. Consider the automorphism of $\Hh^N$: $\eta(z,w)=(\alpha z,\sqrt{\alpha}w)$. Then there is an analytic map $\psi$ of $\Hh^N$ with $\psi(\Hh^N)\subseteq\Hh^N$ and $\psi(z,w)=\psi(z,0)$, which has restricted $K$-limit $0$ at $0$ (see Definition \ref{def:restrKlimit}), such that
\begin{align}
\psi\circ\eta(Z)=f\circ\psi(Z), \label{conjugation}
\end{align}
for every $Z\in\Hh^N$. 
\end{theorem}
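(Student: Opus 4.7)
The plan is to adapt the one-dimensional construction of Poggi-Corradini (Theorem~\ref{thm:conj1dim}) to $\Hh^N$. Define the approximating sequence
\begin{equation*}
\psi_n(z) \defeq f^n\!\bigl(z/\alpha^n,\,0\bigr), \qquad z\in\Hh\defeq\{z\in\C:\rea z>0\},
\end{equation*}
and extend to $\Hh^N$ by $\psi(z,w)\defeq\tilde\psi(z)$, where $\tilde\psi$ is the limit; this extension automatically satisfies the structural condition $\psi(z,w)=\psi(z,0)$, and the direct identity $\psi_n(\alpha z)=f(\psi_{n-1}(z))$ passes to $\psi\circ\eta=f\circ\psi$ in the limit. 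One first verifies that $\eta(z,w)=(\alpha z,\sqrt{\alpha}\,w)$ is indeed an automorphism of $\Hh^N$ (the defining inequality $\rea z>\|w\|^2$ is preserved), with $0\in\partial\Hh^N$ a boundary fixed point of multiplier exactly $\alpha$.

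Normality of $\{\psi_n\}$ is the main technical input and follows from Julia's lemma. Pulling the horospheres of Theorem~\ref{thm:JuliaN} through the Cayley transform, the horospheres at $0\in\partial\Hh^N$ become
\[
E(0,R)=\bigl\{(z,w)\in\Hh^N : |z|^2/(\rea z-\|w\|^2)<R\bigr\},
\]
and a direct computation gives $\eta^{-n}(E(0,R))=E(0,R/\alpha^n)$, while Julia yields $f(E(0,R))\subseteq E(0,\alpha R)$. Since $(z/\alpha^n,0)\in \overline{E(0,R_0/\alpha^n)}$ for $R_0\defeq|z|^2/\rea z$, iterating $f$ a total of $n$ times gives $\psi_n(z)\in\overline{E(0,R_0)}$ for every $n$. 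As each horosphere is compactly contained in $\Hh^N\cup\{0\}$ away from the rest of $\partial\Hh^N$, the family $\{\psi_n\}$ is normal on $\Hh$ and admits a locally uniformly convergent subsequence $\psi_{n_k}\to\tilde\psi$.

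Promoting this subsequential limit to a genuine solution of the functional equation requires ensuring that $\psi_{n_k-1}$ also converges to $\tilde\psi$. I intend to handle this by a diagonal extraction, combined with the observation that the operator $\Phi:\phi\mapsto f\circ\phi\circ\eta^{-1}$ is non-expansive in the Kobayashi metric (since $\eta$ is an isometry and $f$ a contraction), so that the cluster set of $\{\psi_n\}$ is $\Phi$-invariant and collapses to a single $\Phi$-fixed point. The main obstacle I anticipate is ruling out the degenerate case $\tilde\psi\equiv\mathrm{const}$: a constant limit would, via Theorem~\ref{thm:main} applied to a suitable backward-iteration sequence extracted from $\{\psi_n(z_0)\}$, produce a second boundary repelling fixed point of $f$ with multiplier at most $\alpha$ arbitrarily close to $0$, contradicting the isolation hypothesis.

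Boundary regularity is a consequence of the horosphere bound $\tilde\psi(z)\in\overline{E(0,|z|^2/\rea z)}$: as $z\to 0$ inside a sector $\{|z|^2<M\,\rea z\}\subset\Hh$ (the Cayley image of a Stolz region in $\D$), the horosphere parameter shrinks to zero, forcing $\tilde\psi(z)\to 0$; translating via Cayley into the framework of Definition~\ref{def:restrKlimit} then gives the restricted $K$-limit $0$ at $0$. The boundary-regularity step is routine bookkeeping between admissible approach regions and horosphere sub-level sets once the horosphere estimate is in hand; the genuine difficulty throughout is the non-degeneracy argument in the third step, where the isolated-BRFP hypothesis does the essential work.
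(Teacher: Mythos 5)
Your outline follows the general shape of the paper's argument (renormalize backward along the repelling direction, extract a normal limit, pass the intertwining relation to the limit, read off boundary regularity from horosphere invariance), but there is a genuine gap at exactly the place you flag as the ``main obstacle,'' and the hole is not one that can be patched by the route you sketch.

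The critical issue is non-degeneracy of the limit $\tilde\psi$, and your proposed remedy does not go through. The sequence $\psi_n(z_0)=f^n(z_0/\alpha^n,0)$ is \emph{not} a backward-iteration sequence for $f$ (one would need $f(\psi_{n+1}(z_0))=\psi_n(z_0)$, i.e.\ $f^{n+2}(z_0/\alpha^{n+1},0)=f^n(z_0/\alpha^n,0)$, which has no reason to hold), so Theorem~\ref{thm:main} cannot be ``applied to a suitable backward-iteration sequence extracted from $\{\psi_n(z_0)\}$'' --- there is no such extraction. Moreover, even if one could somehow produce a BRFP from a constant limit, a phrase like ``multiplier arbitrarily close to $0$'' is not meaningful (multipliers of BRFPs are $>1$) and is not what the isolation hypothesis excludes. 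The isolation hypothesis is needed much earlier and for a different reason: the paper's proof invokes it only inside Lemma~\ref{lemma:mainlemma}, to \emph{construct} an actual backward-iteration sequence $\{Z_n\}$ with bounded step $a=(\alpha-1)/(\alpha+1)$ converging to $0$. The renormalizing automorphisms $\tau_n=h_n^{-1}\circ\delta_n^{-1}$ are then built from $\{Z_n\}$ itself (a translation moving $Z_n$ to $(t_n,0)$ followed by a dilation by $t_n$), not from the naive points $(\alpha^{-n},0)$. This choice is what makes the limit non-constant essentially for free: one gets $\psi(\alpha^{-k},0)=Z_k$ for all $k$ directly from $f_n\circ\tau_n(\alpha^{-k},0)\approx f_n(Z_{n+k})=Z_k$ and Lemma~\ref{taulemma}. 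Your $\psi_n(z)=f^n(z/\alpha^n,0)$ coincides with the paper's $f_n\circ\tau_n\circ p_1$ only in the special case where $Z_n=(\alpha^{-n},0)$ happens to be a backward orbit of $f$, which one cannot assume.

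A secondary gap is the passage of the functional equation to the limit. It is not enough to observe that $\psi_n\circ\eta=f\circ\psi_{n-1}$ and cite a diagonal extraction: one must show that $\psi_{n_k-1}$ and $\psi_{n_k}$ have the same normal limit, which requires a quantitative estimate $d(\psi_n,\psi_{n+1})\to 0$ locally uniformly. The paper obtains this from Lemma~\ref{lemma:angular} (the Julia--Wolff--Carath\'eodory-type asymptotics of $f$ at the BRFP along special/restricted approach), combined with Lemmas~\ref{taulemma} and~\ref{dlemma}, to show $d\bigl(\tau_n\circ p_1,\ (f\circ\eta^{-1})\circ\tau_n\circ p_1\bigr)\to 0$. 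Your ``$\Phi$ is non-expansive'' heuristic is aimed in this direction but by itself only gives that $d(\psi_n,\psi_{n+1})$ is non-increasing, not that it tends to $0$; the asymptotics of $f$ near $0$ along the special sequence are genuinely needed. In short, the construction of the backward-iteration orbit (Lemma~\ref{lemma:mainlemma}) and the boundary asymptotics (Lemma~\ref{lemma:angular}) are not optional technicalities but the load-bearing inputs that your proposal omits.
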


It follows from the proof of Theorem \ref{thm:mainconjN} (see Lemma \ref{lemma:mainlemma}), that every isolated boundary repelling fixed point is a limit of some backward-iteration sequence with bounded hyperbolic step. Thus in the hyperbolic and elliptic cases we have the following characterization of BRFP in terms of backward-iteration sequences: Every backward-iteration sequence with bounded hyperbolic step converges to a BRFP; and if a BRFP is isolated, then we can construct a backward-iteration sequence with bounded hyperbolic step that converges to it.

The intertwining map $\psi$ in Theorem \ref{thm:mainconjN} satisfies $\psi(z,w)=\psi(z,0)$ and essentially is a map from one dimensional subspace of $\Hh^N$ to $\Hh^N$, therefore that conjugation does not provide information about behavior of $f$ outside of one dimensional image of $\psi$. It then is natural to identify situations in which we can find a conjugation such that the image of the intertwining map $\psi$ has larger dimension.

\begin{theorem}\label{thm:conjexpand} Let $f$ be expandable at $0$ (see Definition \ref{def:expandable}) and $0$ be a boundary repelling fixed point $0$ with multiplier $1<\alpha<\infty$. Assume further that the matrix $A$ in the definition of $f$ is diagonal, and without loss of generality let its eigenvalues be $a_{j,j}=\sqrt{\alpha}e^{i\theta_j}$ for $j=1\ldots L$ ($L$ is an integer, $0\leq L\leq N-1$) and $|a_{j,j}|^2<\alpha$ for $j=L+1\ldots N-1$. Define $\Omega$ as a diagonal matrix with $\Omega_{j,j}=e^{i\theta_j}$ for $j=1\ldots L$ and $\Omega_{j,j}=1$ for $j=L+1\ldots N-1$. Then the conjugation (\ref{conjugation}) holds for $\eta(z,w)=\left({\alpha} z,\Omega{\alpha}^{1/2}w\right)$ and intertwining map $\psi$ such that $\psi(z,w)=\psi(p_L(z,w))$, where $p_L$ is a projection on the first $L+1$ dimensions.
\end {theorem}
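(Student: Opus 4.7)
The plan is to generalize the backward-iteration construction of Theorem \ref{thm:mainconjN} (see Lemma \ref{lemma:mainlemma}) by enlarging the slice on which the intertwining map is built from one complex dimension to $L+1$. The extra $L$ directions are exactly those in which the linear part $A$ of $f$ at $0$ has an eigenvalue of modulus $\sqrt{\alpha}$, so they are synchronized with the corresponding directions of the automorphism $\eta(z,w) = (\alpha z, \Omega\alpha^{1/2}w)$; the last $N-1-L$ directions have $|a_{j,j}|<\sqrt{\alpha}$ and are thus strictly contracted relative to $\eta$, so they drop out of the image of $\psi$.

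Set $\psi_n := f^n \circ \eta^{-n}$ and restrict it to the $(L+1)$-dimensional slice $S_L := \{(z, w_L, 0) \in \Hh^N : w_L \in \C^L\}$, which is $\eta$-invariant because $\Omega$ acts only on the first $L$ coordinates of $w$. A direct computation gives $\psi_n \circ \eta = f \circ \psi_{n-1}$, so any locally uniform limit $\psi$ on $S_L$ will satisfy (\ref{conjugation}). For a compact $K \subset S_L$, $\eta^{-n}(K)$ shrinks toward $0$ at rate $\alpha^{-n}$ in $z$ and $\alpha^{-n/2}$ in $w_L$, with the rotations $\Omega^{-n}$ cancelled exactly by the linear action of $A$ on the first $L$ coordinates, since $A|_{w_L} = \sqrt{\alpha}\,\Omega$. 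The map $f^n$ undoes these scalings to leading order, and the expandability hypothesis (Definition \ref{def:expandable}) controls the higher-order contributions: the nonlinear part of $f$ contributes an error to $\psi_{n+1}-\psi_n$ on $K$ bounded by a fixed negative power of $\alpha^n$, giving a Cauchy-type convergence to a holomorphic limit $\psi : S_L \to \overline{\Hh^N}$. Julia's lemma (Theorem \ref{thm:JuliaN}) applied to the horospheres $H(0,R)$, combined with the identities $\eta^{-n}(H(0,R)) = H(0,R/\alpha^n)$ and $f^n(H(0,R/\alpha^n)) \subset H(0,R)$, traps the image of $\psi$ inside $\Hh^N$.

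Extend $\psi$ to all of $\Hh^N$ by declaring $\psi(z,w) := \psi(p_L(z,w))$. Because $p_L \circ \eta = \eta \circ p_L$ (the $\Omega$-factor acts only on the first $L$ entries of $w$), the intertwining identity transfers immediately from $S_L$ to $\Hh^N$:
\begin{equation*}
\psi(\eta(z,w)) = \psi(\eta(p_L(z,w))) = f(\psi(p_L(z,w))) = f(\psi(z,w)).
\end{equation*}
The boundary regularity at $0$ (restricted $K$-limit $0$, per Definition \ref{def:restrKlimit}) is inherited from the one-dimensional case: any restricted $K$-approach region to $0$ in $S_L$ is contained in a horosphere $H(0,R)$, and Julia's lemma on the $f^n$-side combined with the uniform convergence of $\psi_n$ pins down $\psi(0)=0$.

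The main obstacle is proving the locally uniform convergence of $\psi_n$ on $S_L$. In Theorem \ref{thm:mainconjN} only the single $z$-direction had to be controlled, so the error estimates reduced to one geometric ratio; here the $L$ extra directions are of neutral type ($|a_{j,j}|=\sqrt{\alpha}$) and rotate by $\Omega^n$ at every step, so without care the nonlinear corrections of $f$ could resonate with these rotations. The precise matching $a_{j,j}=\sqrt{\alpha}\,e^{i\theta_j}$ with $\Omega_{j,j}=e^{i\theta_j}$ is exactly what prevents such resonance and lets the geometric decay estimates survive in the multi-dimensional setting.
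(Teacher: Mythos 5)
Your proposal takes a genuinely different route from the paper: you define $\psi_n := f^n\circ\eta^{-n}$ restricted to the slice $S_L$ and claim locally uniform convergence directly, whereas the paper modifies the automorphisms $\tau_n := \Omega^{-n}\circ h_n^{-1}\circ\delta_n^{-1}$ from Section~4 (built from a backward-iteration sequence produced by Lemma~\ref{lemma:mainlemma}) and considers the normal family $\{f_n\circ\tau_n\circ p_L\}$. The distinction is not cosmetic. Two serious gaps appear.

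First, you claim that ``the expandability hypothesis controls the higher-order contributions: the nonlinear part of $f$ contributes an error to $\psi_{n+1}-\psi_n$ on $K$ bounded by a fixed negative power of $\alpha^n$, giving a Cauchy-type convergence.'' Definition~\ref{def:expandable} only supplies little-$o$ estimates, $f(z,w)=(\alpha z+o(|z|),Aw+o(|z|^{1/2}))$; there is no rate attached, so after composing with $f^n$ (which magnifies by $\alpha^n$) you cannot conclude geometric decay of $\psi_{n+1}-\psi_n$, and a Cauchy argument fails. What the $o$-estimates actually give is the weaker statement $d\bigl(\phi(\eta^{-n}(Z)),\eta^{-n}(Z)\bigr)\to 0$ with $\phi=f\circ\eta^{-1}$, i.e.\ the pseudo-hyperbolic gap between consecutive terms tends to zero without any rate; that, combined with extraction from a normal family, is what the paper's analogue of Lemma~\ref{dlemma} and the subsequent argument actually use. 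Your proposal should be recast in that normal-families form, not as a Cauchy estimate.

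Second, even in the normal-families formulation, $\psi_n=f^n\circ\eta^{-n}$ has no interior anchor: Julia's lemma gives $\psi_n(K)\subset H(0,R)$, but the closure of that horosphere touches $\partial\Hh^N$ at $0$, and nothing in your argument rules out a subsequential limit degenerating to the boundary constant $0$. The paper avoids this precisely by constructing $\tau_n$ from a backward-iteration sequence $Z_n$ so that $f_n\circ\tau_n(1,0)=Z_0$ is a fixed interior point for all $n$; the normal limit is then pinned at $Z_0\in\Hh^N$ and cannot be constant. This is the content of Lemma~\ref{lemma:mainlemma}, which you have dispensed with entirely; without it (or some replacement) the non-degeneracy of the limit is unproved. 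Your observation about the role of $\Omega$ in killing rotational resonance and the identification of the neutral vs.\ contracting $w$-directions is sound and matches the paper's computation, but those observations live in the estimate for $d(\phi(\tau_n\circ p_L(Z)),\tau_n\circ p_L(Z))$, not in a Cauchy-convergence argument.
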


In the last section we will provide some new examples, in particular, functions in the two-dimensional Siegel domain that have non-isolated BRFPs, a phenomenon that never occurs in one dimension. In Example \ref{ex:quadpol}, we will show that the quadratic function $f(z,w):=(2z+w^2,w)$ is of hyperbolic type with the Denjoy-Wolff point infinity and has a curve $\{(r^2,ir)|r\in\R\}$ of boundary repelling fixed points, all of them having the same multiplier $\alpha=2$.

In Example \ref{ex:1to2dim} we will describe a non-trivial way to construct a function $f$ of the two-dimensional Siegel domain based on a function $\phi$ of a one-dimensional half-plane.  $f$ will behave very similarly to $\phi$ and will inherit many properties, however, it may have non-isolated BRFPs.

We will finish with a discussion of open questions.
\end{subsubsection}
\end{subsection}
\end{section}

\begin{section}{\bf Convergence of backward-iteration sequences}
 
\begin{proof}[Proof of Theorem \ref{thm:main} (hyperbolic case)]
We will move to the Siegel domain $\Hh^N$. Without loss of generality we can assume that the Denjoy-Wolff is infinity. Also denote backward-iteration sequence as $Z_n=(z_n,w_n)\in\C\times\C^{N-1}$ and define $t_n=\rea z_n-\|w_n\|^2$. The image of the horosphere centered at $(1,0)$ of radius $R$ under the Cayley transform will be 
\[
\left\{(z,w)\in\Hh^N :\frac{\left|1-\left(\cC^{-1}(z,w),(1,0)\right)\right|^2}{1-\left\|\cC^{-1}(z,w)\right\|^2}<R\right\},
\]
\[
\left\{(z,w)\in\Hh^N :\displaystyle\frac{\left|1-\frac{z-1}{z+1}\right|^2}{1-\left|\frac{z-1}{z+1}\right|^2-\frac{\|2w\|^2}{|z+1|^2}}<R\right\},
\]
and after some computations,
\[
\left\{(z,w)\in\Hh^N :\rea z-\|w\|^2>\frac{1}{R}\right\},
\]
i.e. any horosphere centered at the Denjoy-Wolff point $\infty$ will have form 
\[
H(t)=\left\{(z,w)\in\Hh^N \left|\right.\rea z-\|w\|^2>t\right\},
\]
for some $t>0$, and the Siegel domain version of the multi-dimensional Julia's lemma (Theorem \ref{thm:JuliaN}) at infinity will be
\[
\forall R>0\hspace{1cm}f\left(H\left(\frac{1}{R}\right)\right)\subset H\left(\frac{1}{cR}\right)
\]
or
\begin{align}
\forall t>0\hspace{1cm}f\left(H(ct)\right)\subset H(t).\label{JuliaH}
\end{align}

Since $f(Z_{n+1})=Z_n\notin H(t_n)$, by (\ref{JuliaH}) $Z_{n+1}\notin H(ct_n)$, and, by induction, $Z_{n+k}\notin H(c^k t_n)$, $k=1,2, \ldots$. Thus we have
\begin{align}
\rea z_{n+k}-\|w_{n+k}\|^2=t_{n+k}\leq c^k t_n,\hspace{0.8cm}k=1,2, \ldots\label{tngrowth}
\end{align}

Since the dilatation coefficient at the Denjoy-Wolff point $c<1$, the sequence $Z_n$ must tend to the boundary of the Siegel domain as n tends to infinity. All we need to show now is that the limiting set on the boundary is just one point.

Define a Euclidean projection on the boundary of the Siegel domain as
\[
pr(z,w):=(i\ima z+\|w\|^2,w).
\]
It will be enough to show that $pr(Z_n)$ has a limit.

\begin{lemma}\label{lemma:growth} The Euclidean distance between projections of consecutive points of the backward-iteration sequence is bounded by
\[
\left\|pr(Z_{n})-pr(Z_{n+1})\right\|\leq\tilde{C}\sqrt{t_n},
\]
for some positive constant $\tilde{C}$ independent of $n$.
\end{lemma}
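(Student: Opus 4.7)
The plan is to unpack the pseudo-hyperbolic step bound $d_{\Hh^N}(Z_n,Z_{n+1})\leq a$ directly from formula (\ref{dHN}). Rearranging gives
\[
|z_n+\bar{z}_{n+1}-2\langle w_n,w_{n+1}\rangle|^2 \leq \frac{4\,t_n\,t_{n+1}}{1-a^2}\leq \frac{4c}{1-a^2}\,t_n^2,
\]
where in the last step I use $t_{n+1}\leq c\,t_n$ from (\ref{tngrowth}). Writing $z_n=x_n+iy_n$ and using the identity $2\rea\langle w_n,w_{n+1}\rangle=\|w_n\|^2+\|w_{n+1}\|^2-\|w_n-w_{n+1}\|^2$, the real part of the complex number on the left collapses to $t_n+t_{n+1}+\|w_n-w_{n+1}\|^2\geq 0$. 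Since this is dominated by the modulus, I obtain
\[
\|w_n-w_{n+1}\|^2\leq \frac{2\sqrt{c}}{\sqrt{1-a^2}}\,t_n,
\]
and, comparing the imaginary parts,
\[
\bigl|(y_n-y_{n+1})-2\ima\langle w_n,w_{n+1}\rangle\bigr|\leq \frac{2\sqrt{c}}{\sqrt{1-a^2}}\,t_n.
\]

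Next I translate these into bounds on the three coordinates of $pr(Z_n)-pr(Z_{n+1})$, namely $y_n-y_{n+1}$, $\|w_n\|^2-\|w_{n+1}\|^2$ and $w_n-w_{n+1}$. The estimate just obtained on $\|w_n-w_{n+1}\|$, combined with the geometric decay $t_n\leq c^n t_0$, makes $\sum_n\|w_n-w_{n+1}\|$ convergent, so $\{w_n\}$ is Cauchy and in particular uniformly bounded, say $\|w_n\|\leq M$. From this and the identity $\ima\langle w_n,w_{n+1}\rangle=\ima\langle w_n-w_{n+1},w_{n+1}\rangle$ (the diagonal term $\|w_{n+1}\|^2$ being real) I get $|\ima\langle w_n,w_{n+1}\rangle|\leq M\|w_n-w_{n+1}\|$, and likewise $\bigl|\|w_n\|^2-\|w_{n+1}\|^2\bigr|\leq 2M\|w_n-w_{n+1}\|$. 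Feeding these into the imaginary-part bound (and using $t_n\leq \sqrt{t_0}\sqrt{t_n}$ to downgrade the linear-in-$t_n$ estimate to a $\sqrt{t_n}$ one) produces $|y_n-y_{n+1}|\leq \tilde C_1\sqrt{t_n}$. All three coordinate estimates are $O(\sqrt{t_n})$, and $\|pr(Z_n)-pr(Z_{n+1})\|\leq \tilde C\sqrt{t_n}$ follows by squaring and adding.

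The main subtlety is the bootstrap in the second paragraph: the bounds on $y_n-y_{n+1}$ and on $\|w_n\|^2-\|w_{n+1}\|^2$ both require an \emph{a priori} uniform bound on $\|w_n\|$, but this is a free consequence of the $O(\sqrt{t_n})$ control on $\|w_n-w_{n+1}\|$ already produced in the first paragraph, since $\sum\sqrt{t_n}<\infty$ by the geometric decay of $t_n$. No circularity arises, and the argument uses nothing beyond the Julia estimate (\ref{JuliaH}) and the explicit pseudo-hyperbolic formula (\ref{dHN}).
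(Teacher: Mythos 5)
Your proof is correct and is essentially the same computation as the paper's: the paper first centers $Z_n$ at $(t_n,0)$ via the automorphism $h_n$ and bounds $|\tilde z_n + t_n|$, which is exactly $|z_n + \bar z_{n+1} - 2\langle w_n,w_{n+1}\rangle|$, so the subsequent real/imaginary-part estimates and the summability bootstrap for $\|w_n\|$ coincide with the paper's. The only stylistic difference is that your identity $\rea\bigl(z_n + \bar z_{n+1} - 2\langle w_n,w_{n+1}\rangle\bigr) = t_n + t_{n+1} + \|w_n-w_{n+1}\|^2$ yields the bound on $\|w_n-w_{n+1}\|$ directly, where the paper instead invokes the Siegel-domain condition $\|\tilde w_n\|^2 < \tilde x_n$.
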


Assuming lemma and using (\ref{tngrowth}), we have 
\begin{align}
\left\|pr(Z_{n})-pr(Z_{n+k})\right\|&\leq\sum_{j=0}^{k-1}\left\|pr(Z_{n+j})-pr(Z_{n+j+1})\right\|\leq
\tilde{C}\sum_{j=0}^{k-1}\sqrt{t_{n+j}}\leq\tilde{C}\sum_{j=0}^{k-1}\sqrt{c^j t_{n}}\notag\\
&\leq\tilde{C}\sqrt{t_n}\sum_{j=0}^{\infty}\sqrt{c^j}=\frac{\tilde{C}\sqrt{t_n}}{1-\sqrt{c}}   \xrightarrow[n\to\infty]{}0.\label{projgrowth}
\end{align}
Thus $\left\{pr(Z_{n})\right\}$ is a Cauchy sequence and must have a limit $q\in\partial\Hh^N$, which is also the limit for $\left\{Z_{n}\right\}$. Clearly, $q$ is finite and cannot coincide with the Denjoy-Wolff point.

\begin{proof}[Proof of Lemma \ref{lemma:growth}] Consider the images of $Z_n$ and $Z_{n+1}$ under the automorphism in $\Hh^N$ defined by
\[
h_n(z,w):=(z-i\ima z_n+\|w_n\|^2-2\left\langle w,w_n\right\rangle,w-w_n),
\]
which maps $Z_n$ to $(t_n,0)$. Denote $h_n(Z_{n+1})=\tilde{Z}_n=(\tilde{z}_n,\tilde{w}_n)=(\tilde{x}_n+i\tilde{y}_n,\tilde{w}_n)$. Note that $h_n$ is an isometry with respect to the pseudo-hyperbolic distance $d_{\Hh^N}$ (\cite{Abate}) and does not change the horoshperes centered at infinity $H(t)$, because
\[
\rea(z-i\ima z_n+\|w_n\|^2-2\left\langle w,w_n\right\rangle)-\|w-w_n\|^2=\rea z+\|w_n\|^2-2\rea \left\langle w,w_n\right\rangle-\|w-w_n\|^2
\]
\[
=\rea z+\|w_n\|^2-2\rea \left\langle w,w_n\right\rangle-\|w\|^2+2\rea \left\langle w,w_n\right\rangle-\|w_n\|^2=\rea  z-\|w\|^2.
\]
Thus $h_n$ will be called translations.

The point $(\tilde{z}_n,\tilde{w}_n)$ must satisfy two conditions (see Figure \ref{fig:Siegelrestrict}). First, $d_{\Hh^N}\left(Z_n,Z_{n+1}\right)\leq a$, which will take form
\begin{align}
\left|\frac{\tilde{z}_n-t_n}{\tilde{z}_n+t_n}\right|^2+\frac{4t_n\|\tilde{w}_n\|^2}{|\tilde{z}_n+t_n|^2}\leq a^2.\label{distH}
\end{align}
Second, by Julia's lemma (\ref{JuliaH})
\begin{align}
t_{n+1}=\rea \tilde{z}_n-\|\tilde{w}_n\|^2\leq ct_n.\label{tcond}
\end{align}
\begin{figure}[h]
	\centering
		\includegraphics[width=0.50\textwidth]{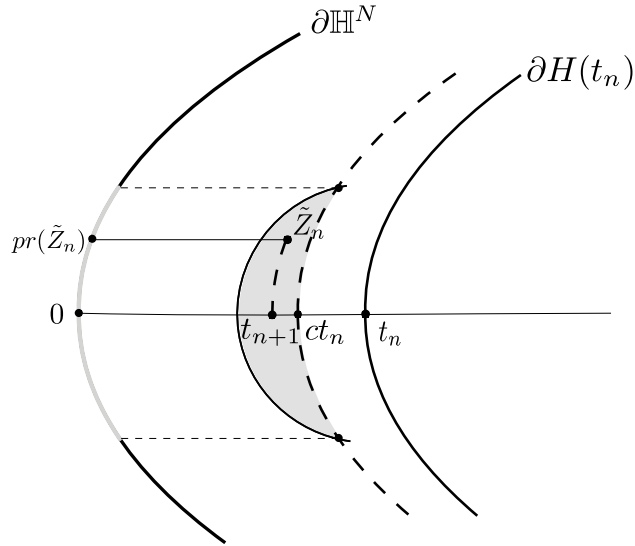}
	\caption{The restriction on the point $\tilde{Z}_n=h_n(Z_{n+1})$ and its projection on the boundary of the Siegel domain. The shaded area represents the intersection of the solutions of (\ref{distH}) and (\ref{tcond}).}
	\label{fig:Siegelrestrict}
\end{figure}

Using (\ref{distH}) and (\ref{tcond}) we obtain
\begin{align*}
|\tilde{z}_n-t_n|^2+4t_n \rea \tilde{z}_n &\leq a^2|\tilde{z}_n+t_n|^2-4t_n\|\tilde{w}_n\|^2+4t_n(ct_n+\|\tilde{w}_n\|^2),\\
|\tilde{z}_n-t_n|^2+4t_n \rea \tilde{z}_n &\leq a^2|\tilde{z}_n+t_n|^2+4ct_n^2,\\
|\tilde{z}_n+t_n|^2 &\leq a^2|\tilde{z}_n+t_n|^2+4ct_n^2,\\
|\tilde{z}_n+t_n|^2 &\leq\frac{4ct_n^2}{1-a^2},\\
|\tilde{x}_n+t_n|^2+|\tilde{y}_n|^2 &\leq\frac{4ct_n^2}{1-a^2}.
\end{align*}
Thus 
\begin{align}
&\tilde{x}_n\leq\frac{2t_n\sqrt{c}}{\sqrt{1-a^2}}-t_n=C_1 t_n,\label{xshiftgrowth}\\
&|\tilde{y}_n|\leq\frac{2t_n\sqrt{c}}{\sqrt{1-a^2}}=C_2 t_n,\label{yshiftgrowth}\\
&\|\tilde{w}_n\|^2<\tilde{x}_n\leq C_1 t_n,\label{wshiftgrowth}
\end{align}
with $C_1$ and $C_2$ independent of $n$. Note that we must have $d_{\Hh^N}(ct_n,t_n)\leq d_{\Hh^N}(\tilde{Z_n},(t_n,0))\leq a$, otherwise the backward-iteration sequence will not exist. It follows that $4c>1-a^2$ and $C_1>0$.

Now
\[
pr(Z_n)=(i \ima  z_n+\|w_n\|^2,w_n)
\]
and
\[
pr(Z_{n+1})=pr(h_n^{-1}(\tilde{z}_n,\tilde{w}_n))=\left(i \ima (\tilde{z}_n+z_n)+2 \ima \left\langle \tilde{w}_n, w_n\right\rangle+\|\tilde{w}_n+w_n\|^2, \tilde{w}_n+w_n\right).
\]
\begin{align}
pr(Z_{n+1})-pr(Z_n)=\left(i \ima \tilde{z}_n+2 \ima \left\langle \tilde{w}_n, w_n\right\rangle+\|\tilde{w}_n+w_n\|^2-\|w_n\|^2,\tilde{w}_n\right)\notag\\
=\left(i \ima \tilde{z}_n+2\left\langle \tilde{w}_n, w_n\right\rangle+\|\tilde{w}_n\|^2,\tilde{w}_n\right).\label{pr1}
\end{align}
\[
\|pr(Z_{n+1})-pr(Z_n)\|^2=\left|i \ima \tilde{z}_n+2\left\langle \tilde{w}_n, w_n\right\rangle+\|\tilde{w}_n\|^2\right|^2+\|\tilde{w}_n\|^2
\]
\[
\leq\left(|\tilde{y}_n|+2\|\tilde{w}_n\|\|w_n\|+\|\tilde{w}_n\|^2\right)^2+\|\tilde{w}_n\|^2\leq
\left(C_2 t_n+2C_1 t_n\|w_n\|+C_1 t_n\right)^2+C_1 t_n\leq\tilde{C}^2t_n,
\]
using (\ref{yshiftgrowth}), (\ref{wshiftgrowth}) and the facts that $t_n\to 0$ and assuming that $\|w_n\|$ is bounded.

Thus it is enough to show now is that $\|w_n\|\leq C_3$. Note that $w_{n+1}=w_n+\tilde{w}_n$ $\forall n$ and thus 
\begin{align*}
\|w_n\|&\leq\|\tilde{w}_{n-1}\|+\|\tilde{w}_{n-2}\|+\ldots+\|\tilde{w}_0\|+\|w_0\|\\
&\leq\sqrt{C_1}\left( \sqrt{t_{n-1}}+\sqrt{t_{n-2}}+\ldots+\sqrt{t_0}\right)+\|w_0\|\\
&\leq\sqrt{C_1}\sqrt{t_0}\left( \sqrt{c^{n-1}}+\sqrt{c^{n-2}}+\ldots+1\right)+\|w_0\|\leq\frac{\sqrt{C_1}\sqrt{t_0}}{1-\sqrt{c}}+\|w_0\|=:C_3.
\end{align*}
\end{proof}
Now we want to show that $\left\{Z_{n}\right\}$ stays in the Koranyi region with vertex $q$. Without loss of generality, take $q=0$. A Koranyi region with vertex $0$ in $\Hh^N$ must be the image under the Cayley transform of a Koranyi region with vertex $(-1,0)$ in $\B^N$, i.e. the set
\[
\left\{(z,w)\in\Hh^N:\frac{|1-(\cC^{-1}(z,w),(-1,0))|}{1-\|\cC^{-1}(z,w)\|}<M\right\}.
\]

Since $1<1+\|\cC^{-1}(z,w)\|<2$, it is enough to show that
\[
\frac{|1-(\cC^{-1}(z,w),(-1,0))|}{1-\|\cC^{-1}(z,w)\|^2}<\frac{M}{2}.
\]
The left-hand side is
\[
\frac{\left|1+\frac{z-1}{z+1}\right|}{1-\left|\frac{z-1}{z+1}\right|^2-\frac{4\|w\|^2}{|z+1|^2}}=\frac{|z+1+z-1||z+1|}{|z+1|^2-|z-1|^2-4\|w\|^2}=\frac{2|z||z+1|}{4\rea z-4\|w\|^2},
\]
thus for $Z_n=(z_n,w_n)\in\Hh^N$ we need
\[
\frac{|z_n||z_n+1|}{(\rea z_n-\|w_n\|^2)}<M.
\]
Since $|z_n+1|>1$ and bounded near $0$, and $\rea  z_n-\|w_n\|^2=t_n$, it is sufficient to show that $|z_n|\leq C t_n$ for some constant $C$ independent of $n$. Using Lemma \ref{lemma:growth}, similarly to (\ref{projgrowth}) we have
\begin{align*}
\left\|pr(Z_{n})\right\|(=\left\|pr(Z_{n})-q\right\|)=\lim_{k\to\infty}\left\|pr(Z_{n})-pr(Z_{n+k})\right\|&\leq\sum_{j=0}^{\infty}\left\|pr(Z_{n+j})-pr(Z_{n+j+1})\right\|\\
&\leq\tilde{C}\sum_{j=0}^{\infty}\sqrt{t_{n+j}}\leq\frac{\tilde{C}\sqrt{t_n}}{1-\sqrt{c}},
\end{align*}
so $\left\|pr(Z_{n})\right\|^2=\left|\ima  z_n+\|w_n\|^2\right|^2+\|w_n\|^2\leq (\frac{\tilde{C}}{1-\sqrt{c}})^2 t_n=C_4 t_n$. It follows that $\|w_n\|^2\leq C_4 t_n$. If there is a bound 
\begin{align}
\left|\ima  z_n+\|w_n\|^2\right|=|z_n-t_n|\leq C_5 t_n,\label{pr1growth}
\end{align}
then
\[
|z_n|\leq|z_n-t_n|+t_n\leq(C_5+1)t_n,
\]
and $Z_n$ must stay in the Koranyi region. It is enough to show (\ref{pr1growth}).

Denote $pr_1(Z_n)=\ima  z_n+\|w_n\|^2$, which is the first component of $pr(Z_n)$. As in (\ref{pr1})
\[
pr_1(Z_{n+1})-pr_1(Z_n)=i\tilde{y}_n+\|\tilde{w}_n\|^2+2\left\langle \tilde{w}_n, w_n\right\rangle
\]
and thus
\begin{align*}
\left|pr_1(Z_{n+1})-pr_1(Z_n)\right|&\leq|\tilde{y}_n|+\|\tilde{w}_n\|^2+2\|\tilde{w}_n\|\|w_n\|\\
&\leq C_2 t_n+C_1 t_n+2\sqrt{C_1 t_n}\sqrt{C_4 t_n}=C_6 t_n.
\end{align*}
\begin{align*}
\left|pr_1(Z_n)-0\right|=\lim_{k\to\infty}\left|pr_1(Z_n)-pr_1(Z_{n+k})\right|&\leq\sum_{k=0}^{\infty} \left|pr_1(Z_{n+k})-pr_1(Z_{n+k+1})\right|\\
&\leq C_6\sum_{k=0}^{\infty}t_{n+k}\leq C_6\sum_{k=0}^{\infty}c^k t_{n}\leq C_5 t_n,
\end{align*}
which proves (\ref{pr1growth}).

Now we will show that Julia's lemma (Theorem \ref{thm:JuliaN}) is applicable to the point $q$. Once again, assume that $q=(-1,0)$ in $\B^N$ or $q=0$ in $\Hh^N$. 
\[
\liminf_{Z\to(-1,0)}\frac{1-\left\|f(Z)\right\|}{1-\left\|Z\right\|}\leq\liminf_{n\to{\infty}}\frac{1-\left\|Z_n\right\|^2}{1-\left\|Z_{n+1}\right\|^2}.
\]
The latter liminf in $\Hh^N$ will take form
\[
\liminf_{n\to{\infty}}\frac{\rea  z_n-\|w_n\|^2}{\rea  z_{n+1}-\|w_{n+1}\|^2}\frac{|z_{n+1}+1|^2}{|z_{n}+1|^2}= \liminf_{n\to{\infty}}\frac{t_n}{t_{n+1}}.
\]

It is enough to show that $t_{n+1}\geq K t_n$ for some constant $K$. Since $d(Z_n,Z_{n+1})\leq a$, $H(t_{n+1})$ must intersect the pseudo-hyperbolic sphere (\ref{distH}), and thus
\[
\frac{t_n-t_{n+1}}{t_n+t_{n+1}}\leq a,
\]
and it follows that
\[
t_{n+1}\geq\frac{1-a}{1+a}t_n,
\]
so Julia's lemma (\ref{Juliaalpha}) holds with finite multiplier $\alpha\leq\frac{1+a}{1-a}$.

Now we will show that there is also a lower bound on $\alpha$:
\begin{align}
\alpha\geq\frac{1}{c},\label{calpha}
\end{align}
where $c<1$.

Consider the image of $0$ in $\B^N$ and denote $f(0)=(z_0,w_0)$. Since $0\in\partial H((1,0),1)$ (here $H((1,0),1)$ is a horosphere centered at the Denjoy-Wolff point $(1,0)$ of radius $1$), by Julia's lemma applied to $(1,0)$, $f(0)\in\overline{H}((1,0),c)$, where $c<1$. This horosphere is a Euclidean ellipsoid, centered at $(\frac{1}{1+c},0)$, whose restriction to the $1$-dimensional subspace, generated by $e_1=(1,0)$ is a disk of radius $\frac{c}{1+c}$ (see \cite{Abate}, (2.2.22)). Thus 
\[
\rea  z_0\geq\frac{1-c}{1+c}.
\]
In a similar way, by Julia's lemma applied to $q=(-1,0)$, $f(0)\in\overline{H}((-1,0),\alpha)$ and
\[
\rea  z_0\leq\frac{\alpha-1}{\alpha+1},
\]
so we have
\[
\frac{\alpha-1}{\alpha+1}\geq\frac{1-c}{1+c},
\]
which is equivalent to $c\alpha\geq 1$ and (\ref{calpha}) follows.
\end{proof}

\begin{proof}[Proof of Theorem \ref{thm:main} (elliptic case)]

Without loss of generality assume $0$ is the Denjoy-Wolff point. We will need the following result on the growth of function $f$ near the boundary of the ball:

\begin{lemma}\label{lemma:elliplem}
Let $f$ be a self-map of the unit ball $\B^N$ fixing zero, not unitary on any slice. Fix $r_0>0$, define $M(r):=\max\|f(r\B^N)\|$, $r\in[r_0,1)$. Then there exists $c=c(r_0)<1$ such that 
\begin{align}
\frac{1-r}{1-M(r)}\leq c\hspace{.2in} \forall r\in[r_0,1) \label{ratio}
\end{align}
\end{lemma}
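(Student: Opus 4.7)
My plan is to convert the question into one about the ratio $M(r)/r$ and split on its limit as $r\to 1^-$. The crucial input from the hypotheses is the strict form of Schwarz's lemma for the ball: since $f(0)=0$ and $f$ is not unitary on any slice, one has $\|f(Z)\|<\|Z\|$ for every nonzero $Z$ and, more importantly, the operator-norm bound $\|f'(0)\|<1$. Both follow from Cauchy-Schwarz saturation applied to the auxiliary maps $g_{\xi,\eta}(\lambda):=\langle f(\lambda\xi),\eta\rangle:\D\to\overline\D$ (with $\xi,\eta\in\partial\B^N$), which vanish at $0$: equality $\|f(Z_0)\|=\|Z_0\|$, or $\|f'(0)v_0\|=1$ for a unit vector $v_0$, would force $f$ to be unitary on the slice through $Z_0$ (respectively through $v_0$), contradicting the hypothesis. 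In particular $\rho(r):=(1-r)/(1-M(r))<1$ pointwise on $(0,1)$.

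First I would show that $r\mapsto M(r)/r$ is non-decreasing. The quotients $g_{\xi,\eta}(\lambda)/\lambda$ are analytic on $\D$ and bounded by $1$, so the maximum principle makes $r\mapsto\max_{|\lambda|=r}|g_{\xi,\eta}(\lambda)|/r$ non-decreasing; taking the supremum over $(\xi,\eta)$ recovers $M(r)/r$. Let $L:=\lim_{r\to 1^-}M(r)/r\in(0,1]$. In the generic case $L<1$, the bound $M(r)\le Lr$ yields
\[
\rho(r)\le\frac{1-r}{1-Lr},
\]
a decreasing function of $r$; hence $\rho(r)\le(1-r_0)/(1-Lr_0)=:c<1$ on $[r_0,1)$, proving the lemma.

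In the borderline case $L=1$, I would argue by contradiction. Suppose $\sup_{[r_0,1)}\rho=1$; extract $r_k\to 1^-$ and unit vectors $\xi_k$ with $\|f(r_k\xi_k)\|=M(r_k)$ and $\rho(r_k)\to 1$. With $\eta_k:=f(r_k\xi_k)/\|f(r_k\xi_k)\|$, form the analytic functions
\[
h_k(\lambda):=\frac{\langle f(\lambda\xi_k),\eta_k\rangle}{\lambda}:\D\to\overline\D,
\]
so that $h_k(r_k)=M(r_k)/r_k>0$ is real. Then $\rho(r_k)\to 1$ unwinds to $(1-h_k(r_k))/(1-r_k)\to 0$. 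Schwarz-Pick for $h_k$ in the Poincar\'e metric $k_\D$ gives $k_\D(h_k(0),h_k(r_k))\le k_\D(0,r_k)$; on the other hand, using $k_\D(0,x)=\tfrac12\log\tfrac{1+x}{1-x}$, one computes $k_\D(0,h_k(r_k))-k_\D(0,r_k)\to+\infty$, and combining with the triangle inequality forces $|h_k(0)|\to 1$. But $h_k(0)=\langle f'(0)\xi_k,\eta_k\rangle$ obeys $|h_k(0)|\le\|f'(0)\|<1$ uniformly in $k$, giving the desired contradiction.

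The main obstacle is the borderline case $L=1$, and its pivotal ingredient is the upgrade from the pointwise inequality $\|f(Z)\|<\|Z\|$ to the operator-norm bound $\|f'(0)\|<1$: without a fixed uniform upper bound strictly less than $1$ on the quantities $|h_k(0)|$, the Schwarz-Pick argument would have nothing to clash with.
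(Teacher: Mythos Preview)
Your proof is correct, but it follows a genuinely different route from the paper's.

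The paper argues entirely by contradiction and never introduces $M(r)/r$ or its limit $L$. It takes a sequence $z_n$ with $\|z_n\|\ge r_0$ and $(1-\|z_n\|)/(1-\|f(z_n)\|)\to 1$, passes to a partial limit $z_0$, and splits on whether $z_0\in\B^N$ or $z_0\in\partial\B^N$. The interior case is dispatched by the pointwise strict Schwarz inequality $\|f(z_0)\|<\|z_0\|$. The boundary case is handled by Julia's lemma (Theorem~\ref{thm:JuliaN}): the liminf hypothesis at $z_0$ gives $\alpha\le 1$, so $f$ maps every horosphere $H(z_0,R)$ into a horosphere $H(w_0,R)$ of the \emph{same} radius; choosing the point $\xi$ of $\overline{H(z_0,R)}$ closest to the origin forces $\|f(\xi)\|\ge\|\xi\|$, again contradicting strict Schwarz.

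Your argument instead exploits the infinitesimal consequence of ``not unitary on any slice,'' namely the operator-norm bound $\|f'(0)\|<1$, and runs a one-variable Schwarz--Pick argument on the auxiliary quotients $h_k$. This avoids the multidimensional Julia lemma altogether and is more elementary in that sense; it also makes transparent why the uniform constant exists (it is tied to the fixed gap $1-\|f'(0)\|$). The paper's approach, by contrast, is more geometric and stays within the horosphere machinery that drives the rest of the section, so it fits the surrounding narrative more naturally. Your case split on $L$ is not strictly necessary---the $L=1$ contradiction argument works verbatim for any $L$---but it does yield the explicit bound $c=(1-r_0)/(1-Lr_0)$ when $L<1$, which the paper's proof does not.
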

\begin{proof}
Assume opposite: $\forall c<1$ $\exists z=z(c)$ with $|z|\geq r_0$ such that
\[
\frac{1-\|z\|}{1-\|f(z)\|}>c
\]
Construct the sequence $z_n:=z(\frac{n-1}{n})$. Let $z_0$ be a partial limit of $\{z_n\}$. If $z_0\in\B^N$, then $f(z_0)\in\B^N$ and
\[
\frac{1-\|z_0\|}{1-\|f(z_0)\|}\geq 1\hspace{.2in}\Leftrightarrow\hspace{.2in}1-\|z_0\|\geq 1-\|f(z_0)\|\hspace{.2in}\Leftrightarrow\hspace{.2in}\|f(z_0)\|\geq\|z_0\|,
\]
contradiction, since $z_0\neq 0$. Thus $z_0\in\partial\B^N$ and we pick a subsequence $z_{n_k}\to z_0$. Then
\[
\limsup_{k\to\infty}\frac{1-\|z_{n_k}\|}{1-\|f(z_{n_k})\|}\geq 1\hspace{.2in}\Leftrightarrow\hspace{.2in}\liminf_{k\to\infty}\frac{1-\|f(z_{n_k})\|}{1-\|z_{n_k}\|}\leq 1
\]
Applying Julia's lemma to the point $z_0\in\partial\B^N$, we obtain that $\exists w_0\in\partial\B^N$ such that $\forall R>0$ $f(H(z_0,R))\subseteq H(w_0,R)$, where $H(z,R)$ is a horosphere centered at $z$ of radius $R$.

Pick $R$ small enough such that $0\not\in \overline{H(z_0,R)}$. Let $\xi$ be a point in $\overline{H(z_0,R)}$, closest to the origin. Since $f(\xi)\in\overline{H(w_0,R)}$, we have $\|f(\xi)\|\geq\|\xi\|$ (the horospheres have the same radius). Contradiction.
\end{proof}

Denote the distance to the boundary $t_n:=1-\|Z_n\|$. By Lemma (\ref{lemma:elliplem}) we have 
\begin{align}
t_{n+k}\leq c^k t_n\hspace{.2in} \forall n,k\geq 0 \label{tgrowth},
\end{align}
where $c:=c(\|Z_0\|)$ as in Lemma (\ref{lemma:elliplem}).

Thus $t_n\leq c^n t_0\to 0$ as $n$ tends to infinity and the sequence $\left\{Z_n\right\}_{n=0}^{\infty}$ must tend to the boundary of the ball. Now denote $\phi_n$ the angle between $Z_n$ and $Z_{n+1}$ seen from the origin (which is also the arc-length between radial projections of $Z_n$ and $Z_{n+1}$ on the boundary of the ball - see Figure \ref{fig:ellipticproof}).

\begin{figure}[ht]
	\centering
		\includegraphics[width=0.50\textwidth]{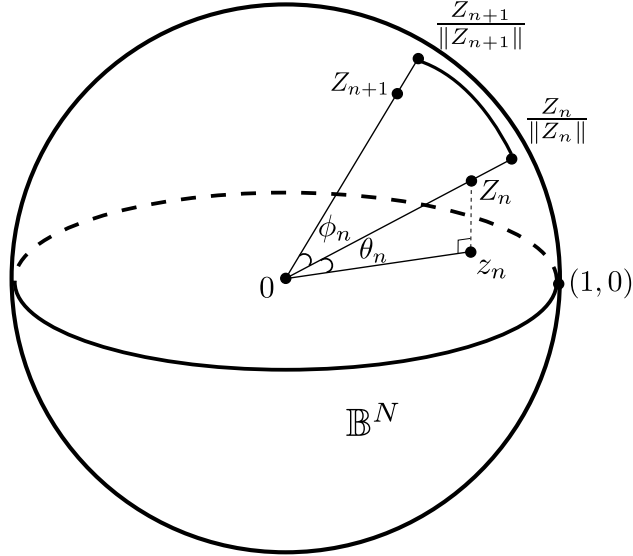}
	\caption{Two consecutive points $Z_n$ and $Z_{n+1}$ and their radial projections on the boundary of the ball.}
	\label{fig:ellipticproof}
\end{figure}

Because $d_{\B^N}(Z_n,Z_{n+1})\leq a$, $Z_{n+1}$ must be inside of the pseudo-hyperbolic ball of radius $a$ centered at $Z_n$, which is the Euclidean ellipsoid centered at $\frac{1-a^2}{1-a^2\|Z_n\|^2}Z_n$ and largest semiaxis $a\sqrt{\frac{1-\|Z_n\|^2}{1-a^2\|Z_n\|^2}}$, so as $Z_n$ tends to the boundary,
\begin{align}
\phi_n\leq C_1(1-\|Z_n\|)^{1/2}=C_1 \sqrt{t_n}. \label{phibound}
\end{align}
Then the arc-length between $\frac{Z_n}{\|Z_n\|}$ and $\frac{Z_{n+k}}{\|Z_{n+k}\|}$ does not exceed
\[
\sum_{j=0}^{k}\phi_{n+j}\leq C_1\sum_{j=0}^{k}\sqrt{t_{n+j}}\leq C_1 \sqrt{t_n}\sum_{j=0}^{k}c^{k/2}\leq C_1\frac{1}{1-\sqrt{c}}\sqrt{t_n},
\]
which tends to $0$ when $n$ tends to infinity, so the sequence of projections must converge to some point on the boundary, denote it $q$. Thus the sequence $Z_n$ must tend to $q$.

The next step is to show that $Z_n$ stays in a Koranyi region centered at $q$. Without loss of generality assume $q=(1,0)$ and denote $Z_n=(z_n,w_n)\in\C\times\C^{N-1}$. We need to show that
\begin{align}
\frac{|1-z_n|}{1-\|Z_n\|}<M \label{Koranyi}
\end{align}
for some $M>1$. By (\ref{tgrowth}) and (\ref{phibound}), The arc-length between $(1,0)$ and the projection of $Z_n$ on the boundary is bounded by
\begin{align}
\sum_{j=n}^{\infty}\phi_j\leq C_1\sum_{j=n}^{\infty}\sqrt{t_j}\leq C_2 \sqrt{t_n}.\label{angles}
\end{align}
Let $\theta_n$ be the angle between $Z_n$ and $z_n$ (i.e. the angle between $Z_n$ and the plane spanned by $(1,0)$). By (\ref{angles}), $\theta_n\leq C_2 \sqrt{t_n}$. Then
\[
1-|z_n|=1-\|Z_n\|\cos\theta_n=1-\cos\theta_n+\cos\theta_n-\|Z_n\|\cos\theta_n\leq 1-\cos\theta_n+1-\|Z_n\|\leq C_3 t_n,
\]
since $1-\cos\theta_n=\frac{\theta_n^2}{2}+o(\theta_n^3$) as $n\to\infty$.

Since $d_{\D}(z_n,z_{n+1})\leq d_{\B^N}(Z_n,Z_{n+1})\leq a$ and the pseudo-hyperbolic disk centered at $z_n$ of radius $a$ is a Euclidean disk with center $w=\frac{1-a^2}{1-a^2|z_n|^2}z_n$ and radius $r=\frac{1-|z_n|^2}{1-a^2|z_n|^2}a$,
\[
|\Arg z_n-\Arg z_{n+1}|\approx\sin|\Arg z_n-\Arg z_{n+1}|\leq\frac{r}{|w|}=\frac{a}{|z_n|}\frac{1-|z_n|^2}{1-a^2}\leq C_4 t_n
\]
Now
\[
|\Arg z_n|=|\Arg z_n-\Arg 1|\leq\sum_{k=n}^{\infty}|\Arg z_k-\Arg z_{k+1}|\leq\sum_{k=n}^{\infty}C_4 t_k\leq C_5 t_n
\] 
and
\[
|1-z_n|^2=(\ima z_n)^2+(1-\rea z_n)^2=|z_n|^2\sin^2\Arg z_n+(1-|z_n|\cos\Arg z_n)^2\leq
\]
\[
\sin^2\Arg z_n+(1-\cos\Arg z_n+1-|z_n|)^2\leq C_6 t_n^2,
\]
and (\ref{Koranyi}) follows.

For Julia's lemma to hold we need to prove that
\[
\liminf_{Z\to(1,0)}\frac{1-f(\|Z\|)}{1-\|Z\|}<\infty.
\] 
Since $\left\{Z_n\right\}_{n=0}^{\infty}$ is a backward-iteration sequence tending to $(1,0)$,
\[
\liminf_{Z\to(1,0)}\frac{1-f(\|Z\|)}{1-\|Z\|}\leq\liminf_{n\to\infty}\frac{1-\|Z_n\|}{1-\|Z_{n+1}\|},
\]
and it is enough to show that the latter liminf is finite. Note that $Z_{n+1}$ must be in the (Euclidean) ellipsoid centered at $\frac{1-a^2}{1-a^2\|Z_n\|^2}Z_n$ with radius $r=\frac{1-|Z_n|^2}{1-a^2|Z_n|^2}a$ in the subspace generated by $Z_n$, and $R=a\sqrt{\frac{1-\|Z_n\|^2}{1-a^2\|Z_n\|^2}}$ in the dimensions orthogonal to $Z_n$. Thus the point $W$, closest to the boundary, must have norm
\begin{align*}
\|W\|=\frac{1-a^2}{1-a^2\|Z_n\|^2}\|Z_n\|+\frac{1-\|Z_n\|^2}{1-a^2\|Z_n\|^2}a=\frac{(\|Z_n\|+a)(1-a\|Z_n\|)}{1-a^2\|Z_n\|^2}=\frac{\|Z_n\|+a}{1+a\|Z_n\|}
\end{align*}
and
\begin{align*}
1-\|Z_{n+1}\|\geq 1-\|W\|=1-\frac{\|Z_n\|+a}{1+a\|Z_n\|}=\frac{(1-a)(1-\|Z_n\|)}{1+a\|Z_n\|}.
\end{align*}
Thus
\begin{align*}
\frac{1-\|Z_n\|}{1-\|Z_{n+1}\|}\leq\frac{1+a\|Z_n\|}{1-a}\leq\frac{1+a}{1-a},
\end{align*}

and Julia's lemma holds with multiplier $\alpha\leq\frac{1+a}{1-a}$. The lower bound on the multiplier $\alpha\geq\frac{1}{c}$ is the direct consequence of the Lemma (\ref{lemma:elliplem}).

Note that the above results will hold for $c=c(\|Z_n\|)$ $\forall n\geq 0$, and since $\|Z_n\|\to 1$, for
\begin{align*}
c:=\lim_{r_0\to 1}c(r_0).
\end{align*}
\end{proof}

\end{section}

\begin{section}{\bf Construction of special backward-iteration sequence}

It was shown in the previous section that any backward-iteration sequence with bounded hyperbolic step tend to a BRFP. Now we will show that any isolated BRFP is a limit of a special backward-iteration sequence. This special backward-iteration sequence will be a cornerstone in the construction of conjugation near BRFP.

We will follow the idea, similar to that in one-dimensional case outlined in \cite{PPC1}. Note that in one dimension BRFPs with multipliers bounded by the same constant have to be isolated, as it follows from theorem of Cowen and Pommerenke \cite{CP}. Here we will have to impose this as a hypothesis, since not all BRFPs are isolated in higher dimensions (see Example \ref{ex:quadpol}).

\begin{lemma}\label{lemma:mainlemma}
Let $f$ be an analytic self-map of $\B^N$ and $(1,0)$ be a BRFP for $f$ with multiplier $1<\alpha<\infty$, isolated from the other BRFP's with multipliers less or equal to $\alpha$. Then there exist a backward-iteration sequence $\{Z_n\}_{n=0}^{\infty}$ tending to $(1,0)$ such that 
\[
d(Z_n,Z_{n+1})\leq a=\displaystyle\frac{\alpha-1}{\alpha+1}.
\]
\end{lemma}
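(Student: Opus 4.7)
The proof follows the one-dimensional template of \cite{PPC1}. After a Cayley-type change of coordinates we may assume the BRFP sits at $\infty$ in the Siegel domain $\Hh^N$; then Julia's lemma at the BRFP reads
\[
f(H(\alpha t))\subseteq H(t)\qquad\text{for every }t>0,
\]
where $H(t)=\{(z,w)\in\Hh^N:\rea z-\|w\|^2>t\}$, exactly as in the computations at the start of Section 2.

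I construct the desired sequence by a diagonal/compactness procedure. For each large integer $m$, pick the deep starting point $Z^{(m)}_m:=(\alpha^m,0)\in H(\alpha^m)$ and form the forward $f$-orbit $Z^{(m)}_k:=f^{m-k}(Z^{(m)}_m)$ for $k=m,m-1,\dots,0$. By construction $f(Z^{(m)}_{k+1})=Z^{(m)}_k$, so each finite sequence $Z^{(m)}_0,\dots,Z^{(m)}_m$ is a backward iteration sequence for $f$, and iterated application of Julia's lemma gives $Z^{(m)}_k\in H(\alpha^k)$ for every $k$; in particular $Z^{(m)}_0\in H(1)$ uniformly in $m$. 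The pseudo-hyperbolic bound $d_{\Hh^N}(Z^{(m)}_k,Z^{(m)}_{k+1})\le a=(\alpha-1)/(\alpha+1)$ at every step comes from the same Schwarz-vs-horosphere interaction used in the hyperbolic case of Theorem \ref{thm:main}: $a$ is precisely the pseudo-hyperbolic distance between the consecutive horospheres $\partial H(\alpha^{k+1})$ and $\partial H(\alpha^k)$ along the positive real axis.

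The heart of the matter is then the uniform-in-$m$ precompactness of the families $\{Z^{(m)}_k\}_{m\ge k}$ for each fixed $k$. Here I apply the projection estimate of Lemma \ref{lemma:growth} at each step and sum geometrically in the depth $t^{(m)}_k$ (which shrinks by at most a factor $\alpha$ per step), obtaining a uniform bound $\|pr(Z^{(m)}_k)\|\le C(\alpha,a)$ independent of $m$; combined with $Z^{(m)}_k\in H(\alpha^k)$ this confines the sequence to a compact subset of $\Hh^N$. A standard diagonal extraction then yields, along a common subsequence $m_j\to\infty$, limits $Z_k:=\lim_j Z^{(m_j)}_k\in\Hh^N$ for every $k\ge 0$ satisfying $f(Z_{k+1})=Z_k$ by continuity, $d_{\Hh^N}(Z_k,Z_{k+1})\le a$ by passing the step bound to the limit, and $t_k\ge\alpha^k\to\infty$, which forces $Z_k\to\infty\in\partial\Hh^N$, i.e.\ to the original BRFP.

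The principal obstacle is the precompactness step: as $m$ grows the starting points $Z^{(m)}_m$ recede deeper into $\Hh^N$, and one must show that the tangential drift accumulated over the first $m-k$ forward $f$-iterations remains uniformly bounded. The isolation hypothesis enters precisely here --- if $\infty$ were not isolated among BRFPs of multiplier at most $\alpha$, the forward $f$-orbits of the deep starting points could drift tangentially toward a competing BRFP and destroy precompactness; ruling this out via a Wolff-type normal-family argument applied to $\{f^{m-k}\}$ is what forces the sequences $Z^{(m)}_k$ to stay in a Koranyi region with vertex at the given BRFP and thereby yields the desired backward iteration sequence in the limit.
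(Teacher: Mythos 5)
Your general scheme --- build finite backward-orbit segments from radially placed starting points near the BRFP and pass to a diagonal limit --- is the right skeleton, and it is indeed what the paper does (following Lemma 1.4 of \cite{PPC1}). But two of the load-bearing steps are asserted rather than proved, and as asserted they are false.

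The step bound is the more serious gap. You claim $d(Z^{(m)}_k,Z^{(m)}_{k+1})\le a$ because ``$a$ is precisely the pseudo-hyperbolic distance between the consecutive horospheres $\partial H(\alpha^{k+1})$ and $\partial H(\alpha^k)$ along the positive real axis.'' That identifies a distance between two particular axial points, but it does not bound the distance between arbitrary points of the two horoballs: Julia's lemma only puts $Z^{(m)}_{k+1}\in \overline{H(\alpha^{k+1})}$, and the pseudo-hyperbolic distance from a point of $\partial H(\alpha^k)$ to a generic point of $\overline{H(\alpha^{k+1})}$ is unbounded (it tends to $1$ near $\partial\Hh^N$). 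By the Schwarz lemma the steps are monotone, so the only control you need is at the top, $d\bigl((\alpha^m,0),f(\alpha^m,0)\bigr)$ --- but even this is not automatically $\le a$; the paper only proves the asymptotic statement $d(r_k,f(r_k))\to a$, and that proof essentially requires the Julia--Wolff--Carath\'eodory estimates of Lemma~\ref{lemma:angular} (the restricted $K$-limits $\tfrac{1-\pi_1(f(Z))}{1-\pi_1(Z)}\to\alpha$ and $\tfrac{f(Z)-\pi_1(f(Z))e_1}{|1-\pi_1(Z)|^{1/2}}\to 0$) to pin down the radial and tangential position of $f(r_k)$. Your proposal never invokes any such angular-derivative input, and without it the step bound simply does not follow from the horosphere containments.

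The precompactness step is also not supported. Your starting points $(\alpha^m,0)$ recede to $\infty$, and after $m-k$ forward iterates all you know is $Z^{(m)}_k\in H(\alpha^k)$, an unbounded region. The appeal to Lemma~\ref{lemma:growth} is out of place: that lemma is proved for a backward-iteration sequence whose step is \emph{already} known to be $\le a$, with the Denjoy--Wolff point at $\infty$ and $t_n\to 0$ geometrically because of the multiplier $c<1$ there; none of those hypotheses hold for your forward segments, and the step bound is exactly what you have not yet established, so the argument is circular. The paper sidesteps this by a first-exit device: fix a small Euclidean ball $D$ about the BRFP (here the isolation hypothesis is used, to make $D$ avoid both the Denjoy--Wolff point and every other BRFP of multiplier $\le\alpha$), let $J=\partial D\cap\B^N$, and for each $k$ follow the forward orbit of the segment $\gamma_k$ from $r_k$ to $f(r_k)$ until it first meets $J$. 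Choosing the point of $\gamma_k$ whose $n_k$-th iterate lies on the compact set $J$ produces a finite backward-orbit segment whose initial term sits on $J$ and whose later terms stay inside the bounded set $D\cap\B^N$; normality then makes the diagonal extraction work. Your proof needs something playing the same role, and a vague appeal to a ``Wolff-type normal-family argument'' does not supply it.
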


In this and the following sections we will need a geometric notion slightly different from Koranyi regions:

\begin{definition}\label{def:restrictedcurve}
For $X\in\partial\B^N$, a curve $\sigma:[0,1)\to\B^N$ such that $\sigma(t)\to X$ as $t\to 1$ is called special if 
\begin{align}
\lim_{t\to 1}\frac{\|\sigma(t)-\sigma_X(t)\|^2}{1-\|\sigma_X(t)\|^2}=0,\label{specialcond}
\end{align}
and restricted if it is special and its orthogonal projection $\sigma_X:=(\sigma,X)X$ is non-tangential.
\end{definition}
\begin{definition}\label{def:restrKlimit}
We will say that $f:\B^N\to\B^N$ has restricted $K$-limit $Y$ at $X\in\partial\B^N$ if $f(\sigma(t))\to Y$ as $t\to 1$ for any restricted curve $\sigma$.
\end{definition}
\begin{remark}
Restricted $K$-limit is a weaker notion than $K$-limit: a function having $K$-limit has restricted $K$-limit, and a function having restricted $K$-limit has non-tangential limit, see \cite{Abate}.
\end{remark}
We will need the following result on the behavior of the radial and tangential components of $f$ near the BRFP $(1,0)$:
\begin{lemma}\label{lemma:angular}
Let $f:\B^N\to\B^N$ be analytic and $(1,0)$ be a fixed point for $f$ with multiplier $\alpha$ (in the sense of Julia's lemma). Then the following functions are bounded in every Koranyi region:
\begin{enumerate}
\item{$\displaystyle\frac{1-\pi_1(f(Z))}{1-\pi_1(Z)}$,}
\item{$\displaystyle\frac{f(Z)-\pi_1(f(Z))(1,0)}{\left|1-\pi_1(Z)\right|^{1/2}}$,}
\end{enumerate}
where $\pi_1(Z)=(Z,(1,0))$. Moreover, the function (1) has restricted $K$-limit $\alpha$ at $(1,0)$, and the function (2) has restricted $K$-limit $0$ at $(1,0)$.
\end{lemma}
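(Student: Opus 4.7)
The plan is to derive both assertions from Julia's lemma (Theorem \ref{thm:JuliaN}) combined with elementary manipulations, and to identify the restricted $K$-limits by a slicing argument; this is essentially the Julia--Wolff--Carath\'eodory theorem adapted to the ball (compare Abate, Theorem 2.2.29).

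First I would apply Julia's lemma at $(1,0)$ with multiplier $\alpha$ to obtain the quotient inequality
\begin{equation*}
\frac{|1-\pi_1(f(Z))|^2}{1-\|f(Z)\|^2}\le \alpha\,\frac{|1-\pi_1(Z)|^2}{1-\|Z\|^2},\qquad Z\in\B^N.
\end{equation*}
Combining this with $1-\|f(Z)\|^2\le 2|1-\pi_1(f(Z))|$ (which follows from $|\pi_1(W)|\le\|W\|$ together with $1-s^2\le 2|1-w|$ when $|w|=s\le 1$) yields $|1-\pi_1(f(Z))|\le 2\alpha|1-\pi_1(Z)|^2/(1-\|Z\|^2)$. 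For $Z\in K((1,0),M)$ the defining inequality $|1-\pi_1(Z)|<M(1-\|Z\|)$ gives $|1-\pi_1(Z)|^2/(1-\|Z\|^2)\le M|1-\pi_1(Z)|/(1+\|Z\|)$, and the first ratio is therefore bounded by $2\alpha M$. For the tangential part, the identity $\|f(Z)-\pi_1(f(Z))(1,0)\|^2=\|f(Z)\|^2-|\pi_1(f(Z))|^2\le 1-|\pi_1(f(Z))|^2\le 2|1-\pi_1(f(Z))|$ chained with the estimate above produces
\begin{equation*}
\frac{\|f(Z)-\pi_1(f(Z))(1,0)\|^2}{|1-\pi_1(Z)|}\le 4\alpha\,\frac{|1-\pi_1(Z)|}{1-\|Z\|^2}\le 4\alpha M
\end{equation*}
in $K((1,0),M)$, establishing the boundedness in (2).

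To identify the restricted $K$-limits I would pass to slices. Along a restricted curve $\sigma(t)\to(1,0)$ the special condition (\ref{specialcond}) forces $1-\|\sigma(t)\|^2\sim 1-|\pi_1(\sigma(t))|^2$ as $t\to 1$, so that the approach is asymptotically one-dimensional along the radial slice through $(1,0)$. Restricting $f$ to that slice and invoking the one-dimensional Julia--Carath\'eodory theorem identifies the limit of (1) as the angular derivative $\alpha$. For (2), the same slicing combined with the sharpness of Julia's quotient along the restricted curve forces the tangential part to decay strictly faster than $|1-\pi_1(Z)|^{1/2}$, giving restricted $K$-limit $0$.

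The main difficulty lies in this last step: the horosphere estimates are robust and yield only boundedness, whereas the identification of the exact limit values rests on the special condition in Definition \ref{def:restrictedcurve} being strong enough to reduce the analysis to a one-dimensional setting in which the angular-derivative content of Julia--Carath\'eodory can be applied.
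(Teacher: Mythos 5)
The paper dispatches this lemma in one line by citing Abate, Theorem 2.2.29 (the Julia--Wolff--Carath\'eodory theorem for the ball), so your proposal is an attempt to reprove that result from scratch. For the \emph{boundedness} claims in (1) and (2), your argument is correct and self-contained: the quotient form of Julia's lemma at the fixed point $(1,0)$, the elementary estimates $1-\|W\|^2\le 2|1-\pi_1(W)|$ and $\|W-\pi_1(W)(1,0)\|^2\le 1-|\pi_1(W)|^2\le 2|1-\pi_1(W)|$, and the Koranyi inequality $|1-\pi_1(Z)|<M(1-\|Z\|)$ combine exactly as you say to give the explicit bounds $2\alpha M$ and $4\alpha M$ (for the squared norm). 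This is a genuinely more elementary route to the boundedness than invoking the full JWC machinery.

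The identification of the restricted $K$-limits, however, is not established by the slicing heuristic as you have written it, and you correctly flag this as the gap. Two ingredients are missing. First, you need the slice function $g(\zeta):=\pi_1\bigl(f(\zeta,0)\bigr)$ to have angular derivative \emph{exactly} $\alpha$ at $1$: your boundedness estimate gives $g'(1)\le\alpha$, and the reverse inequality comes from combining $1-\|f(Z)\|\le 1-|\pi_1(f(Z))|$ with the characterization of $\alpha$ as $\liminf_{Z\to(1,0)}\frac{1-\|f(Z)\|}{1-\|Z\|}$, restricted to the radial slice. That argument also yields $\lim_{r\to1}\frac{1-\|f(r,0)\|^2}{1-r}=2\alpha=\lim_{r\to1}\frac{1-|g(r)|^2}{1-r}$, so the tangential quotient in (2) has radial limit $0$. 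Second -- and this is the substantive missing step -- a radial limit along the slice does not automatically give a restricted $K$-limit; you need a multi-dimensional Lindel\"of principle (Abate, Theorem 2.2.25) which says that a function bounded in Koranyi regions and possessing a limit along one special restricted curve has the same restricted $K$-limit. Your claim that the special condition ``forces'' the approach to be ``asymptotically one-dimensional'' captures the idea behind Lindel\"of's principle but does not substitute for it: a restricted curve does not lie on the slice, so one cannot simply restrict $f$ to the slice and quote the one-dimensional theorem. With Lindel\"of supplied, your sketch becomes essentially a reconstruction of Abate's proof, with the bonus that the boundedness is obtained by a clean direct computation.
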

\begin{proof}
Apply theorem 2.2.29 (i) and (ii) from \cite{Abate} to the boundary fixed point $(1,0)$.
\end{proof}
\begin{proof}[Proof of Lemma \ref{lemma:mainlemma}] Let $D$ be a small enough (Euclidean) closed ball centered at $(1,0)$ that does not contain the Denjoy-Wolff point of $f$ or any other BRFP of $f$. Let $a_k=(\alpha^k-1)/(\alpha^k+1)$ and 
\[
H(a_k)=\left\{Z\in\B^N:\frac{|1-(Z,e_1)|^2}{1-\|Z\|^2}\leq\frac{(1-a_k)^2}{1-a_k^2}=\alpha^{-k}\right\},
\]

i.e. a horosphere whose intersection with the 1-dimensional subspace generated by $e_1=(1,0)$ is a disk with diameter $[(a_k,0),(1,0)]$. Let $n_0$ be the smallest integer such that $H(a_{n_0})\subseteq D$ and $r_k=a_{{n_0}+k}$. (We will identify $r_k$ with $(r_k,0)\in\B^N$, that will cause no confusion). Also let $H_k=H(r_k)$, $J=\partial D\cap\B^N$ and $\gamma_n$ be the line segment connecting $r_k$ and $f(r_k)$.

For each $k$, the sequence $\{f_n(r_k)\}_n$ converges to the Denjoy-Wolff point of $f$, hence eventually leaves $D$. So there exists a smallest integer $n_k$ such that $f_{n_k}(\gamma_k)$ intersects $J$. By Julia's lemma (Theorem \ref{thm:JuliaN}), $f(H_{k+1})\subseteq H_k$, so $f_j(\gamma_k)$ cannot intersect $J$ for $j=1, 2, \ldots k-1$ and thus $n_k\geq k$.

\begin{claim1}
$d(r_k,f(r_k))\xrightarrow[k\to\infty]{}a$.
\end{claim1}
By Lemma \ref{lemma:angular}, 
\[
\lim_{k\to\infty}\frac{1-\pi_1(f(r_k))}{1-r_k}=\alpha,
\]
and by the definition of multiplier
\begin{align}
\liminf_{k\to\infty}\frac{1-\|f(r_k)\|}{1-r_k}\geq \alpha.\label{multineq}
\end{align}
By (\ref{dBN}), the pseudo-hyperbolic distance $d$ in $\B^N$ must satisfy the relation:
\[
1-d^2(r_k,f(r_k))=\frac{(1-r_k^2)(1-\|f(r_k)\|^2)}{|1-r_k\pi_1(f(r_k))|^2}=\frac{(1+r_k)(1+\|f(r_k)\|)\displaystyle\frac{1-\|f(r_k)\|}{1-r_k}}{\left|\displaystyle\frac{1-r_k\pi_1(f(r_k))}{1-r_k}\right|^2}.
\]
Now
\[
\frac{1-r_k\pi_1(f(r_k))}{1-r_k}=\frac{1-r_k+r_k-r_k\pi_1(f(r_k))}{1-r_k}=1+r_k\frac{1-\pi_1(f(r_k))}{1-r_k}\longrightarrow 1+\alpha,
\]
and so
\[
\liminf_{k\to\infty}\left(1-d^2(r_k,f(r_k))\right)\geq\frac{4A}{(1+\alpha)^2}
\]
or
\[
\limsup_{k\to\infty}d(r_k,f(r_k))\leq\frac{\alpha-1}{\alpha+1}=a.
\]
We will need the following inequality for $d_k:=d(r_k,f(r_k))$:
\begin{align}
\frac{1-\|f(r_k)\|}{1-r_k}\leq\frac{1+d_k}{1-r_kd_k}.\label{imageineq}
\end{align}

In fact, this is a partial case of more general inequality:
\begin{claim}\label{distanceineq}
For all $Z,W\in\B^N$ and $d:=d_{\B^N}(Z,W)$
\[
\frac{1-||W\|}{1-\|Z\|}\leq\frac{1+d}{1-d\|Z\|}
\]
\end{claim}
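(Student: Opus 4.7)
The plan is to reduce the claimed inequality to a cleaner algebraic statement and then derive it from Cauchy--Schwarz combined with the explicit formula (\ref{dBN}) for $d_{\B^N}$. First I would clear denominators: both $1-\|Z\|$ and $1-d\|Z\|$ are strictly positive since $\|Z\|,d<1$, so the target inequality is equivalent (after cross-multiplying and cancellation) to
\[
\|Z\|-\|W\| \;\le\; d\bigl(1-\|Z\|\|W\|\bigr),
\]
i.e.\ to the reduced bound
\[
\frac{\|Z\|-\|W\|}{1-\|Z\|\|W\|} \;\le\; d.
\]
If $\|W\|\ge\|Z\|$ the left side is nonpositive and there is nothing to show, so we may assume $\|Z\|>\|W\|$.

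Next, I would invoke Cauchy--Schwarz: $|(Z,W)|\le\|Z\|\|W\|<1$, and in particular
\[
|1-(Z,W)| \;\ge\; \rea\bigl(1-(Z,W)\bigr) \;\ge\; 1-\|Z\|\|W\| \;>\; 0.
\]
Substituting this lower bound into the formula (\ref{dBN}) gives
\[
1-d^2 \;=\; \frac{(1-\|Z\|^2)(1-\|W\|^2)}{|1-(Z,W)|^2} \;\le\; \frac{(1-\|Z\|^2)(1-\|W\|^2)}{(1-\|Z\|\|W\|)^2}.
\]

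The last step is the elementary identity
\[
(1-\|Z\|\|W\|)^2 - (\|Z\|-\|W\|)^2 \;=\; (1-\|Z\|^2)(1-\|W\|^2),
\]
verified by direct expansion, which rewrites the previous bound as
\[
1-d^2 \;\le\; 1 - \left(\frac{\|Z\|-\|W\|}{1-\|Z\|\|W\|}\right)^2.
\]
Cancelling the $1$'s and extracting the (nonnegative) square root yields exactly the reduced inequality, finishing the argument. There is no real obstacle here: once one recognizes that the claim is just the one-dimensional statement about pseudo-hyperbolic balls transplanted to $\B^N$ through Cauchy--Schwarz, everything is bookkeeping. The only point demanding a moment's care is checking the positivity of $1-d\|Z\|$ to justify cross-multiplying without reversing the inequality.
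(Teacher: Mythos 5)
Your proof is correct, and it takes a genuinely different route from the paper's. The paper argues geometrically: it uses the fact that the pseudo-hyperbolic ball $\Delta$ of radius $d$ about $Z$ is a Euclidean ellipsoid with center $\frac{1-d^2}{1-d^2\|Z\|^2}Z$ and radius $\frac{1-\|Z\|^2}{1-d^2\|Z\|^2}d$ in the $Z$-direction, finds the point of $\Delta$ closest to the origin (which has modulus $\frac{\|Z\|-d}{1-d\|Z\|}$), and concludes $\|W\|\geq\frac{\|Z\|-d}{1-d\|Z\|}$ since $W\in\Delta$. Your argument instead clears denominators to reduce the claim to $d_{\D}(\|Z\|,\|W\|)=\frac{\|Z\|-\|W\|}{1-\|Z\|\|W\|}\leq d$ and then derives this from Cauchy--Schwarz applied inside the explicit formula (\ref{dBN}) for $d_{\B^N}$, together with the elementary identity $(1-\|Z\|\|W\|)^2-(\|Z\|-\|W\|)^2=(1-\|Z\|^2)(1-\|W\|^2)$. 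Your route is more self-contained and algebraic, requiring no knowledge of the Euclidean geometry of pseudo-hyperbolic balls, and it isolates the Schwarz--Pick--type contraction $d_{\D}(\|Z\|,\|W\|)\leq d_{\B^N}(Z,W)$ as the real content; the paper's route is shorter if one already has the ellipsoid description at hand (as it is used repeatedly elsewhere in the paper) and yields slightly more geometric information, namely which point of the ball is extremal. Both are complete and correct.
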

\begin{proof}
Let $\Delta$ be a closed hyperbolic ball centered at $Z$ of (pseudo-hyperbolic) radius $d=d_{\B^N}(Z,W)$. This is a Euclidean ellipsoid, centered at $\displaystyle\frac{1-d^2}{1-d^2\|Z\|^2}Z$ and a disk of radius $\displaystyle\frac{1-\|Z\|^2}{1-d^2\|Z\|^2}d$, when restricted to the subspace generated by $Z$. Thus the point, which is closest to the origin must be in the subspace generated by $Z$, and has modulus 
\[
\displaystyle\frac{1-d^2}{1-d^2\|Z\|^2}\|Z\|-\frac{1-\|Z\|^2}{1-d^2\|Z\|^2}d=\frac{(\|Z\|-d)(1+d\|Z\|)}{1-d^2\|Z\|^2}=\frac{\|Z\|-d}{1-d\|Z\|}.
\]
Since $W\in\Delta$,
\[
1-\|W\|\leq 1-\frac{\|Z\|-d}{1-d\|Z\|}=\frac{1+d}{1-d\|Z\|}(1-\|Z\|),
\]
\[
\frac{1-\|W\|}{1-\|Z\|}\leq\frac{1+d}{1-d\|Z\|}.
\]

\end{proof}
By taking $limsup$ of both sides of (\ref{imageineq}),
\[
\limsup_{k\to\infty}\frac{1-\|f(r_k)\|}{1-r_k}\leq\frac{1+a}{1-a}=\alpha,
\]

so this with (\ref{multineq}) shows that $\displaystyle\lim_{k\to\infty}\frac{1-\|f(r_k)\|}{1-r_k}=\alpha$ and $\displaystyle\lim_{k\to\infty} d(r_k,f(r_k))=a$.

The final steps in the construction are exactly the same as in proof of lemma 1.4 in \cite{PPC1}.
\end{proof}

\begin{lemma}\label{mainlemma}
If $\{Z_n\}_{n=1}^{\infty}$ is backward-iteration sequence, which tends to $e_1=(1,0)$ (BRFP with multiplier $\alpha>1$) and $d(Z_n,Z_{n+1})\leq a=\frac{\alpha-1}{\alpha+1}$, then its image in the Siegel domain must satisfy the following properties:
\begin{align}
\lim_{n\to\infty}\frac{\rea  z_n}{t_n}=1, \label{xlimit}
\end{align}
\begin{align}
\lim_{n\to\infty}\frac{\ima  z_n}{t_n}=0, \label{ylimit}
\end{align}
\begin{align}
\lim_{n\to\infty}\frac{\|w_n\|^2}{t_n}=0, \label{wlimit}
\end{align}
\begin{align}
\lim_{n\to\infty}\frac{t_n}{t_{n+1}}=\alpha, \label{tlimit}
\end{align}
where $t_n:=\rea  z_n-\|w_n\|^2$.
In particular, the sequence $\{Z_n\}$ is special, i.e.
\[
\lim_{n\to\infty}\frac{\|Z_n-(Z_n,e_1)e_1\|^2}{1-\|(Z_n,e_1)\|^2}=0.
\]
\end{lemma}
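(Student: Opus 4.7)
The plan is to leverage the fact that the bound $d_n\le a$ is forced to be tight in the limit and combine this with Julia's lemma at the BRFP $e_1$ to extract all four Siegel asymptotics; the ``special'' statement then follows as a direct Cayley translation. To see that $d_n\to a$, note that $d_n$ is monotone by the Schwarz--Pick contraction, so $d_n\uparrow d_\infty\le a$; the multiplier bound from Theorem~\ref{thm:main} gives $\alpha\le(1+d_\infty)/(1-d_\infty)$, which forces $d_\infty\ge(\alpha-1)/(\alpha+1)=a$. Hence $1-d_n^2\to 4\alpha/(\alpha+1)^2$ and the distance formula~(\ref{dHN}) sharpens to
\[
\bigl|z_n+\bar z_{n+1}-2\langle w_n,w_{n+1}\rangle\bigr|^2=(1+o(1))\frac{(\alpha+1)^2}{\alpha}\,t_n t_{n+1}.
\]

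Taking real parts and using the identity $\rea(z_n+\bar z_{n+1}-2\langle w_n,w_{n+1}\rangle)=t_n+t_{n+1}+\|w_n-w_{n+1}\|^2$ together with $|A|\ge\rea A$, then dividing through by $\sqrt{t_n t_{n+1}}$, I obtain
\[
\sqrt{t_n/t_{n+1}}+\sqrt{t_{n+1}/t_n}+\frac{\|w_n-w_{n+1}\|^2}{\sqrt{t_n t_{n+1}}}\le\sqrt\alpha+\frac{1}{\sqrt\alpha}+o(1).
\]
Since $u+1/u$ attains the right-hand side only at $u\in\{1/\sqrt\alpha,\sqrt\alpha\}$, every limit point of $r_n:=\sqrt{t_n/t_{n+1}}$ lies in $[1/\sqrt\alpha,\sqrt\alpha]$. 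To upgrade this to $r_n\to\sqrt\alpha$, I combine the above with Julia's lemma at $0\in\partial\Hh^N$, which gives $|z_n|^2/t_n\le\alpha\,|z_{n+1}|^2/t_{n+1}$, and with the hypothesis $t_n\to 0$: if $r_n<\sqrt\alpha$ along a subsequence, the slack in the displayed inequality forces $\|w_n-w_{n+1}\|^2\gtrsim\sqrt{t_n t_{n+1}}$ there, and a telescoping argument modeled on~(\ref{projgrowth}) combined with the geometric decay of $t_n$ then obstructs $w_n\to 0$. This is the main technical obstacle.

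Once~(\ref{tlimit}) is in hand, the displayed inequality becomes asymptotically tight, yielding $\|w_n-w_{n+1}\|^2=o(\sqrt{t_n t_{n+1}})=o(t_n)$; summing telescopically against $t_n\asymp\alpha^{-n}$ gives $\|w_n\|^2=o(t_n)$, which is~(\ref{wlimit}), and then $\rea z_n=t_n+\|w_n\|^2=t_n(1+o(1))$ immediately gives~(\ref{xlimit}). For~(\ref{ylimit}), the imaginary part of $z_n+\bar z_{n+1}-2\langle w_n,w_{n+1}\rangle$ must be $o(\sqrt{t_n t_{n+1}})$ by the sharp modulus estimate and the already-identified real part, and a second telescoping gives $\ima z_n=o(t_n)$. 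Finally, the special conclusion is the Cayley translation of~(\ref{wlimit})--(\ref{xlimit}): $\|Z_n-(Z_n,e_1)e_1\|^2/(1-|(Z_n,e_1)|^2)$ is comparable up to bounded factors to $\|w_n\|^2/\rea z_n$, which tends to $0$.
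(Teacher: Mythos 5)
Your overall plan is sound and takes a genuinely different route from the paper. Instead of conjugating $Z_n$ to $(t_n,0)$ by the automorphism $h_n$ and estimating the shifted coordinates $(\tilde z_n,\tilde w_n)$ as the paper does, you work directly with the cross term $z_n+\bar z_{n+1}-2\langle w_n,w_{n+1}\rangle$, using the identity $\rea(z_n+\bar z_{n+1}-2\langle w_n,w_{n+1}\rangle)=t_n+t_{n+1}+\|w_n-w_{n+1}\|^2$ together with the tightness $d_n\to a$. Once (\ref{tlimit}) is in place, your derivations of (\ref{wlimit}), (\ref{xlimit}), (\ref{ylimit}) and the special conclusion are correct and essentially parallel to the paper's telescoping estimates.

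However, the step you yourself flag as the ``main technical obstacle'' --- upgrading $r_n:=\sqrt{t_n/t_{n+1}}$ from lying in $[1/\sqrt\alpha,\sqrt\alpha]$ to converging to $\sqrt\alpha$ --- is not resolved by the mechanism you suggest. Your inequality
\begin{align*}
r_n+\frac{1}{r_n}+\frac{\|w_n-w_{n+1}\|^2}{\sqrt{t_n t_{n+1}}}\le\sqrt\alpha+\frac{1}{\sqrt\alpha}+o(1)
\end{align*}
is an upper bound on a sum of nonnegative quantities: if $r_n$ drifted toward the interior of $[1/\sqrt\alpha,\sqrt\alpha]$, this bound would \emph{permit} the third term to be large, but it can never \emph{force} it to be bounded below, so the contradiction you hope to derive from telescoping $w_n$ never materializes. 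The auxiliary Julia inequality $|z_n|^2/t_n\le\alpha|z_{n+1}|^2/t_{n+1}$ is also not yet usable quantitatively, since before (\ref{xlimit})--(\ref{ylimit}) are available you only know $|z_{n+1}|\ge t_{n+1}$, not $|z_{n+1}|\asymp t_{n+1}$.

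The fix is much shorter than the detour. You already invoked the multiplier at $e_1$ once (to show $\alpha\le(1+d_\infty)/(1-d_\infty)$); invoke it once more, exactly as the paper does for (\ref{tlimit}). Since $f(Z_{n+1})=Z_n$ and $Z_{n+1}\to e_1$,
\begin{align*}
\alpha=\liminf_{Z\to e_1}\frac{1-\|f(Z)\|}{1-\|Z\|}\le\liminf_{n\to\infty}\frac{1-\|Z_n\|}{1-\|Z_{n+1}\|}=\liminf_{n\to\infty}\frac{t_n}{t_{n+1}},
\end{align*}
so $\liminf r_n\ge\sqrt\alpha$. Combined with your $\limsup_n(r_n+1/r_n)\le\sqrt\alpha+1/\sqrt\alpha$ and the monotonicity of $u\mapsto u+1/u$ on $[1,\infty)$, this pins $r_n\to\sqrt\alpha$, i.e.\ (\ref{tlimit}); the remainder of your argument then closes. (The paper obtains the complementary upper bound $\limsup t_n/t_{n+1}\le\alpha$ via Claim~\ref{distanceineq} rather than via your distance-formula computation, but the lower bound is the same multiplier-definition step you are missing.)
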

\begin{proof} By definition of multiplier 
\[
\liminf_{n\to\infty}\frac{1-\|Z_n\|}{1-\|Z_{n+1}\|}\geq \alpha=\frac{1+a}{1-a}.
\]
Applying Claim \ref{distanceineq} to $Z_n$, $Z_{n+1}$ and $r_n=d(Z_n,Z_{n+1})$, we have 
\[
\frac{1-\|Z_n\|}{1-\|Z_{n+1}\|}\leq\frac{1+r_n}{1-r_n\|Z_{n+1}\|}\leq\frac{1+a}{1-a\|Z_{n+1}\|}.
\]

Taking $\limsup$ of both sides, 
\[
\frac{1-\|Z_n\|}{1-\|Z_{n+1}\|}\to \alpha
\]
or, in Siegel domain,
\[
\frac{t_n}{t_{n+1}}\to \alpha, 
\]
so (\ref{tlimit}) is proved. Here we are going to use slightly different version of Cayley transform:
\[
\cC^{-1}(z,w):=\left(\frac{1-z}{1+z},\frac{2w}{1+z}\right),
\]
so that BRFP $(1,0)$ will be mapped to $\cC(1,0)=(0,0)$.

Consider the images of two consecutive points $Z_n$ and $Z_{n+1}$ under the automorphism 
$h_n:(z,w):=(z-i \ima  z_n+\|w_n\|^2-2(w,w_n),w-w_n)$, s.t. $h_n(Z_n)=(t_n,0)$ and denote $(\tilde{z}_n,\tilde{w}_n):=h_n(Z_{n+1})$. $h_n$ does not change the pseudo-hyperbolic distance in $\Hh^N$, so $d\left((t_n,0),(\tilde{z}_n,\tilde{w}_n)\right)=d(Z_n,Z_{n+1})\leq a$, which is 
\[
\|\tilde{z}_n-t_n\|^2+4t_n\|\tilde{w}_n\|^2\leq a^2\|\tilde{z}_n+t_n\|^2,
\]
\[
\|\tilde{z}_n-t_n\|^2+4t_n(\rea  \tilde{z}_n-t_{n+1})\leq a^2\|\tilde{z}_n+t_n\|^2,
\]
\[
(1-a^2)\|\tilde{z}_n+t_n\|^2\leq 4t_nt_{n+1},
\]
\[
\left|\frac{\tilde{z}_n}{t_n}+1\right|^2\leq\frac{4t_{n+1}}{t_n(1-a^2)}.
\]
Taking limsup of both sides and using (\ref{tlimit}),
\[
\limsup_{n\to\infty}\left|\frac{\tilde{z}_n}{t_n}+1\right|^2=\limsup_{n\to\infty}\left(\left|\frac{\rea \tilde{z}_n}{t_n}+1\right|^2+\left|\frac{\ima \tilde{z}_n}{t_n}\right|^2\right)\leq\left(1+\frac{1}{\alpha}\right)^2.
\]

Since $\rea \tilde{z}_n=t_{n+1}+\|\tilde{w}_n\|^2\geq t_{n+1}$,
\[
\limsup_{n\to\infty}\left(\left|\frac{t_{n+1}}{t_n}+1\right|^2+\left|\frac{\ima \tilde{z}_n}{t_n}\right|^2\right)\leq\left(1+\frac{1}{\alpha}\right)^2,
\]
\[
\left(\frac{1}{\alpha}+1\right)^2+\limsup_{n\to\infty}\left|\frac{\ima \tilde{z}_n}{t_n}\right|^2\leq\left(1+\frac{1}{\alpha}\right)^2.
\]

So,
\begin{align}
\frac{\ima \tilde{z}_n}{t_n}\to 0, \label{yshiftlimit}
\end{align}
which implies
\begin{align}
\frac{\rea \tilde{z}_n}{t_n}\to \frac{1}{\alpha} \label{xshiftlimit}
\end{align}
and
\begin{align}
\frac{\|\tilde{w}_n\|^2}{t_n}=\frac{\rea \tilde{z}_n}{t_n}-\frac{t_{n+1}}{t_n}\to 0. \label{wshiftlimit}
\end{align}
Now $w_{n+1}=w_n+\tilde{w}_n$, $w_{n+k}=w_n+\displaystyle\sum_{j=0}^{k-1}\tilde{w}_{n+j}$ $\forall k\geq 1$. 
\[
\|w_{n+k}\|\geq\|w_n\|-\sum_{j=0}^{k-1}\|\tilde{w}_{n+j}\|,
\]
\[
0\geq\|w_n\|-\sum_{j=0}^{\infty}\|\tilde{w}_{n+j}\|,
\]
\[
\|w_n\|\leq\sum_{j=0}^{\infty}\|\tilde{w}_{n+j}\|.
\]

Since $\frac{t_n}{t_{n+1}}\to \alpha>1$, pick $\epsilon$ such that $\alpha-\epsilon>1$, then for large enough $n$ $t_{n+1}\leq\frac{t_n}{\alpha-\epsilon}$ and $t_{n+j}\leq\frac{t_n}{(\alpha-\epsilon)^j}$.

Now by (\ref{wshiftlimit}), $\forall\delta>0\ \exists N=N(\delta)$ s.t. $\|\tilde{w}_n\|\leq\delta\sqrt{t_{n}}$ for $n\geq N$

\[
\|w_n\|\leq\sum_{j=0}^{\infty}\delta\sqrt{t_{n+j}}\leq\delta\sum_{j=0}^{\infty}\frac{\sqrt{t_n}}{(\alpha-\epsilon)^{j/2}}=\delta S\sqrt{t_n},
\]
where $S$ is finite sum. So 
\[
\frac{\|w_n\|^2}{t_n}\to 0 
\]
and
\[
\frac{\rea  z_n}{t_n}=\frac{t_n+\|w_n\|^2}{t_n}\to 1.
\]

Similarly, because $\ima  z_{n+1}=\ima  z_n+\ima  \tilde{z}_n+2 \ima \left\langle\tilde{w}_n,w_n \right\rangle$, $\left|2 \ima \left\langle\tilde{w}_n,w_n \right\rangle\right|\leq 2\|\tilde{w}_n\|\|w_n\|$ and using (\ref{yshiftlimit}), (\ref{wshiftlimit}) and (\ref{wlimit}), 
\[
\frac{\ima  z_n}{t_n}\to 0.
\]

The condition (\ref{specialcond}) for $(z_n,w_n)\to(1,0)$ being special in $\B^N$ is
\[
\lim_{n\to\infty}\frac{\|w_n\|^2}{1-|z_n|^2}=0
\]
or, in $\Hh^N$
\[
\lim_{n\to\infty}\frac{\frac{4\|w_n\|^2}{|1+z_n|^2}}{1-\left|\frac{1-z_n}{1+z_n}\right|^2}=\lim_{n\to\infty}\frac{\|w_n\|^2}{\rea  z_n}=0.
\]
But
\[
\lim_{n\to\infty}\frac{\|w_n\|^2}{\rea  z_n}=\lim_{n\to\infty}\frac{\frac{\|w_n\|^2}{t_n}}{\frac{\rea  z_n}{t_n}}=0.
\]
\end{proof}

\end{section}\
\begin{section}{\bf Conjugation at boundary repelling fixed point}
The aim of this section is to solve equation (\ref{conjugation})
in $\B^N$, where $\eta$ is an automorphism of $\B^N$ with the same dilatation coefficient at BRFP as $f$ and $\psi:\B^N\to\B^N$ is an analytic map with some regularity at BRFP. As in \cite{PPC1}, the conjugating map will be obtained via the sequence of iterates $f_n$ composed with appropriate automorphisms of $\B^N$. It will be convenient to build almost the entire construction in $\Hh^N$ with BRFP $0$.

We will start with several technical statements.

Using the backward-iteration sequence $(z_n,w_n)\to 0$ as in Lemma \ref{mainlemma} with $t_n=\rea z_n-\|w_n\|^2$, define a sequence of automorphisms $\tau_n$ of $\Hh^N$ as $\tau_n:=h_n^{-1}\circ\delta_n^{-1}$, where
\[
h_n(z,w)=(z+\|w_n\|^2-iy_n-2\left\langle w,w_n\right\rangle, w-w_n),
\]
\[
h_n^{-1}(z,w)=(z+\|w_n\|^2+iy_n+2\left\langle w,w_n\right\rangle, w+w_n),
\]
\[
\delta_n(z,w)=(\frac{z}{t_n},\frac{w}{\sqrt{t_n}}),
\]
\[
\delta_n^{-1}(z,w)=(t_n z,\sqrt{t_n}w).
\]

Then $\tau_n(1,0)=(z_n,w_n)$.

\begin{lemma}\label{taulemma}
Let $\eta_k(z,w):=(\alpha^k z,\alpha^{k/2} w)$ and $\tau_n$ be defined as above. Then
\begin{enumerate}
	\item 
	 $\tau_{n+k}^{-1}\circ\tau_n\to\eta_k$, uniformly on compact subsets of $\Hh^N$, as $n$ tends to infinity,
	\item
   $\tau_{n+1}^{-1}\circ\eta^{-1}\circ\tau_n(z,w)\to(z,w)$, uniformly on compact sets of $\Hh^N$, as $n$ tends to infinity.
\end{enumerate}
\end{lemma}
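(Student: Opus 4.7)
The plan is to prove both parts by direct substitution into the explicit formulas for $\tau_n$ and then to read off each limit from the four estimates (\ref{xlimit})--(\ref{tlimit}) of Lemma \ref{mainlemma}. Writing out the compositions,
\[
\tau_n(z,w) = \bigl(t_n z + \|w_n\|^2 + iy_n + 2\sqrt{t_n}\,\langle w, w_n\rangle,\ \sqrt{t_n}\,w + w_n\bigr),
\]
\[
\tau_m^{-1}(Z,W) = \Bigl(\frac{Z + \|w_m\|^2 - iy_m - 2\langle W, w_m\rangle}{t_m},\ \frac{W - w_m}{\sqrt{t_m}}\Bigr),
\]
so both $\tau_{n+k}^{-1}\circ\tau_n$ and $\tau_{n+1}^{-1}\circ\eta^{-1}\circ\tau_n$ are affine in $(z,w)$, and each assertion reduces to convergence of a few explicit coefficients.

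For part (1) I would substitute $(Z,W)=\tau_n(z,w)$ into $\tau_{n+k}^{-1}$ and collect. The second component becomes $\sqrt{t_n/t_{n+k}}\,w + (w_n-w_{n+k})/\sqrt{t_{n+k}}$; the leading factor converges to $\alpha^{k/2}$ by (\ref{tlimit}), while the additive remainder is dominated by $\sqrt{t_n/t_{n+k}}\cdot\|w_n\|/\sqrt{t_n} + \|w_{n+k}\|/\sqrt{t_{n+k}}$, which vanishes by (\ref{wlimit}). The first component has $z$-coefficient $t_n/t_{n+k}\to\alpha^k$, and every remaining term is either constant or linear in $w$ with coefficient of one of the forms $\|w_n\|^2/t_{n+k}$, $\|w_{n+k}\|^2/t_{n+k}$, $|y_n|/t_{n+k}$, $|y_{n+k}|/t_{n+k}$, $\sqrt{t_n}\,\|w_n\|/t_{n+k}$, $\sqrt{t_n}\,\|w_{n+k}\|/t_{n+k}$, or $|\langle w_n,w_{n+k}\rangle|/t_{n+k}$. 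Each such factor I would split as $(t_n/t_{n+k})\cdot(\cdot)$ or $\sqrt{t_n/t_{n+k}}\cdot(\cdot)$, after which the surviving quotient is exactly one of the limits of Lemma \ref{mainlemma} and tends to $0$; Cauchy--Schwarz handles the inner-product term. This yields the pointwise limit $(\alpha^k z,\alpha^{k/2}w)=\eta_k(z,w)$.

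Part (2) is the same calculation with $\tau_n$ replaced by $\eta^{-1}\circ\tau_n$ and $k=1$. The $z$-coefficient of the first component becomes $t_n/(\alpha t_{n+1})\to 1$ and the $w$-coefficient of the second becomes $\sqrt{t_n/(\alpha t_{n+1})}\to 1$, both by (\ref{tlimit}); the remaining terms differ from those of part (1) only by harmless factors of $\alpha^{\pm 1}$ or $\alpha^{\pm 1/2}$ and vanish for the same reasons.

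Uniform convergence on compact subsets of $\Hh^N$ is then automatic: since $\tau_{n+k}^{-1}\circ\tau_n-\eta_k$ and $\tau_{n+1}^{-1}\circ\eta^{-1}\circ\tau_n - \Id$ are affine in $(z,w)$ with coefficient-sequences tending to $0$, any uniform bound on $\|z\|+\|w\|$ promotes the pointwise estimates to uniform ones. The only genuine obstacle is bookkeeping—organizing the many cross terms produced by the two compositions so that each visibly reduces to one of the four limits of Lemma \ref{mainlemma}; no new ideas beyond those already used to establish that lemma are needed.
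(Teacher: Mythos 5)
Your proposal is correct and takes essentially the same approach as the paper: write out the explicit composition $\tau_{n+k}^{-1}\circ\tau_n$ (resp.\ $\tau_{n+1}^{-1}\circ\eta^{-1}\circ\tau_n$) and pass to the limit term by term using (\ref{tlimit}), (\ref{ylimit}), and (\ref{wlimit}) from Lemma~\ref{mainlemma}. Your observation that each composition is affine in $(z,w)$, so that convergence of the finitely many coefficients automatically upgrades to uniform convergence on compacts, is a cleaner way to organize the bookkeeping than the paper's single long display, but the underlying computation is identical.
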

\begin{proof}
Using definition of $\tau_n$ and properties (\ref{xlimit}), (\ref{ylimit}), (\ref{wlimit}) and (\ref{tlimit}),
\[
\tau_{n+k}^{-1}\circ\tau_n(z,w)=\delta_{n+k}\circ h_{n+k}\circ h_n^{-1}\circ\delta_n^{-1}(z,w)=
\]
\[
\left(\frac{t_n}{t_{n+k}}z+\frac{\|w_n\|^2}{t_{n+k}}+i\frac{y_n}{t_{n+k}}+2\frac{\sqrt{t_n}}{t_{n+k}}
\left\langle w,w_n\right\rangle+\frac{\|w_{n+k}\|^2}{t_{n+k}}-i\frac{y_{n+k}}{t_{n+k}}
-\frac{2}{t_{n+k}}\left\langle \sqrt{t_n}w+w_n,w_{n+k}\right\rangle, \right.
\]
\[
\left. \frac{\sqrt{t_n}w+w_n-w_{n+k}}{\sqrt{t_{n+k}}}\right)\xrightarrow[n\to\infty]{}\left(\alpha^k z,\alpha^{k/2}w\right)=\eta_k(z,w).
\]

\[
\tau_{n+1}^{-1}\circ\eta^{-1}\circ\tau_n(z,w)=\delta_{n+1}\circ h_{n+1}\circ\eta^{-1}\circ h_n^{-1}\circ\delta_n^{-1}(z,w)=
\]
\[
\left(\frac{t_n}{t_{n+1}\alpha}z+\frac{\|w_n\|^2}{t_{n+1}\alpha}+i\frac{y_n}{t_{n+1}\alpha}+2\frac{\sqrt{t_n}}{t_{n+1}\alpha}
\left\langle w,w_n\right\rangle+\frac{\|w_{n+1}\|^2}{t_{n+1}}-i\frac{y_{n+1}}{t_{n+1}}
-\frac{2}{t_{n+1}}\left\langle \sqrt{t_n}w+w_n,w_{n+1}\right\rangle, \right.
\]
\[
\left. \frac{\sqrt{t_n}w+w_n}{\sqrt{t_{n+1}}\sqrt{\alpha}}-\frac{w_{n+1}}{\sqrt{t_{n+1}}}\right)\xrightarrow[n\to\infty]{}\left(z,w\right).
\]
\end{proof}

\begin{claim}
$\tau_n(z,w)\xrightarrow[n\to\infty]{}0$ and stays in Koranyi region uniformly on compact sets of $\Hh^N$.
\end{claim}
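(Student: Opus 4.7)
The plan is to expand $\tau_n(z,w)=h_n^{-1}\bigl(\delta_n^{-1}(z,w)\bigr)$ in coordinates and then read off both assertions from the asymptotics of Lemma \ref{mainlemma}. Direct substitution gives
\[
\tau_n(z,w)=\Bigl(t_n z+\|w_n\|^2+iy_n+2\sqrt{t_n}\,\langle w,w_n\rangle,\ \sqrt{t_n}\,w+w_n\Bigr).
\]
On a fixed compact $K\subset\Hh^N$ the quantities $|z|$ and $\|w\|$ are bounded above while $\rea z-\|w\|^2$ is bounded below by a positive constant.

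For the convergence assertion, each summand in each coordinate tends uniformly to $0$ on $K$: by Lemma \ref{mainlemma} we have $t_n\to 0$, and $\|w_n\|\to 0$, $y_n\to 0$, $\sqrt{t_n}\|w_n\|\to 0$ follow from (\ref{ylimit}) and (\ref{wlimit}) together with $t_n\to 0$. Hence $\tau_n(z,w)\to 0$ uniformly on compacts of $\Hh^N$.

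For the Koranyi statement, a brief cancellation (the cross terms $2\sqrt{t_n}\rea\langle w,w_n\rangle$ appearing in $\rea z_n'$ and in $\|w_n'\|^2$ agree) gives the clean identity
\[
t':=\rea z_n'-\|w_n'\|^2=t_n\bigl(\rea z-\|w\|^2\bigr).
\]
Translating the Koranyi condition at $(1,0)\in\partial\B^N$ through the Cayley transform and absorbing the factors $|1+z_n'|\to 1$ and $1+\|\cC^{-1}(\tau_n(z,w))\|\to 2$, membership in a fixed Koranyi region amounts to a uniform upper bound on $|z_n'|/t'$. Dividing the explicit expression for $z_n'$ by $t_n(\rea z-\|w\|^2)$ produces the leading term $|z|/(\rea z-\|w\|^2)$, which is bounded on $K$, plus correction terms with prefactors $\|w_n\|^2/t_n$, $|y_n|/t_n$, and $\|w_n\|/\sqrt{t_n}$, each of which vanishes by Lemma \ref{mainlemma}. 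Thus $|z_n'|/t'$ is uniformly bounded on $K$ for all sufficiently large $n$.

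The remaining and only nontrivial point, which I expect to be the main bookkeeping obstacle, is to promote \emph{sufficiently large} $n$ to \emph{all} $n$ with a single amplitude $M$. For each of the finitely many excluded initial indices $\tau_n$ is a fixed automorphism of $\Hh^N$, so $\tau_n(K)$ is a compact subset of $\Hh^N$ whose image in $\B^N$ is bounded away from $\partial\B^N$ and therefore lies in a Koranyi region of some finite amplitude. Taking the maximum of these finitely many amplitudes and the asymptotic bound produces the required uniform Koranyi region independent of $n$.
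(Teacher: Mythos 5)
Your proof is correct and follows essentially the same route as the paper: expand $\tau_n$ explicitly, reduce the Koranyi condition near $0\in\partial\Hh^N$ (via the Cayley transform pulling back the Koranyi region at $(1,0)$) to a bound on $|z_n'|/(\rea z_n'-\|w_n'\|^2)$, use the cancellation $\rea z_n'-\|w_n'\|^2=t_n(\rea z-\|w\|^2)$, divide out by $t_n$, and invoke the limits from Lemma \ref{mainlemma}. The one small addition you make that the paper leaves implicit is the final remark that the asymptotic bound only gives an amplitude valid for large $n$, and that the finitely many initial indices are handled by compactness of each $\tau_n(K)$; that is a legitimate (if minor) gap in the paper's wording, and your patch is valid.
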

\begin{proof}
\[
\tau_n(z,w)=\left(t_n z+\|w_n\|^2+iy_n+2\left\langle \sqrt{t_n}w,w_n\right\rangle,\sqrt{t_n}w+w_n\right).
\]

Condition for $(z,w)$ being in Koranyi region with vertex $0$ in $\Hh^N$:

\[
\frac{|z|}{\rea  z-\|w\|^2}<M.
\]

For $\tau(z,w)$:

\[
\frac{\left|t_n z+\|w_n\|^2+iy_n+2\left\langle \sqrt{t_n}w,w_n\right\rangle\right|}{t_n \rea  z+\|w_n\|^2+
2\sqrt{t_n}\rea \left\langle w,w_n\right\rangle-\|\sqrt{t_n}w+w_n\|^2}
\]

\[
=\frac{\left|z+\frac{\|w_n\|^2}{t_n}+\frac{iy_n}{t_n}+2\left\langle w,\frac{w_n}{\sqrt{t_n}}\right\rangle\right|}{\rea  z+\frac{\|w_n\|^2}{t_n}+
2\rea \left\langle w,\frac{w_n}{\sqrt{t_n}}\right\rangle-\|w+\frac{w_n}{\sqrt{t_n}}\|^2}\xrightarrow[n\to\infty]{}\frac{|z|}{\rea  z-\|w\|^2}.
\]

The limit is bounded on compact subsets of $\Hh^N$, so $\tau_n(z,w)$ belong to some Koranyi region.
\end{proof}

\begin{claim}
Let $\phi:=f\circ\eta^{-1}$ in $\B^N$. Then
\[
\liminf_{z\to (1,0)}\frac{1-\|\phi(z)\|}{1-\|z\|}=1
\]
and Lemma \ref{lemma:angular} is applicable.
\end{claim}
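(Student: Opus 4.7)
\smallskip

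\noindent\textbf{Proof plan.} I aim to show that the multiplier of $\phi=f\circ\eta^{-1}$ at $(1,0)$ equals $1$; once that is done, $(1,0)$ is a boundary fixed point of $\phi$ with finite Julia multiplier, so Lemma~\ref{lemma:angular} applies with $\alpha=1$. The strategy is to bracket the $\liminf$ from above via a horosphere–composition argument and from below via a factorization of the quotient.

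\emph{Upper bound.} The map $\eta$ is an automorphism of $\B^N$ fixing $(1,0)$ on the boundary with Julia multiplier $\alpha$: in Siegel coordinates $\eta(z,w)=(\alpha z,\sqrt{\alpha}\,w)$ sends the horosphere $\{|z|^2/t<R\}$ bijectively onto $\{|z|^2/t<\alpha R\}$. Thus $\eta(H((1,0),R))=H((1,0),\alpha R)$ with equality, and applying $\eta^{-1}$ gives $\eta^{-1}(H((1,0),R))=H((1,0),R/\alpha)$. Combined with Julia's lemma for $f$ at the BRFP $(1,0)$,
\[
\phi\bigl(H((1,0),R)\bigr)=f\bigl(H((1,0),R/\alpha)\bigr)\subseteq H((1,0),R),
\]
so Theorem~\ref{thm:JuliaN} applied to $\phi$ yields $\liminf_{Z\to(1,0)}(1-\|\phi(Z)\|)/(1-\|Z\|)\leq 1$.

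\emph{Lower bound.} I factor
\[
\frac{1-\|\phi(Z)\|}{1-\|Z\|}=\frac{1-\|f(\eta^{-1}(Z))\|}{1-\|\eta^{-1}(Z)\|}\cdot\frac{1-\|\eta^{-1}(Z)\|}{1-\|Z\|}.
\]
Along any sequence $Z_n\to(1,0)$ the first factor has $\liminf\geq\alpha$, because $\eta^{-1}(Z_n)\to(1,0)$ and the multiplier of $f$ at $(1,0)$ is $\alpha$. For the second factor, from the Cayley-transform identity $1-\|(Z_1,Z_2)\|^2=|1+Z_1|^2(\rea z-\|w\|^2)$ a short computation expresses the ratio $(1-\|\eta^{-1}(Z)\|^2)/(1-\|Z\|^2)$ as $|1+Z_1'|^2/(\alpha|1+Z_1|^2)$, where $Z_1'$ is the first $\B^N$-coordinate of $\eta^{-1}(Z)$. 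Since $Z_1\to 1$ and $Z_1'\to 1$ as $Z\to(1,0)$, the second factor converges to $1/\alpha$ along every approach, and dividing out the harmless factor $(1+\|Z\|)/(1+\|\eta^{-1}(Z)\|)\to 1$ gives the same limit for $(1-\|\eta^{-1}(Z)\|)/(1-\|Z\|)$. As this limit is positive, $\liminf$ of the product is at least $\alpha\cdot(1/\alpha)=1$.

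Combining both bounds yields $\liminf=1$, so $(1,0)$ is a boundary fixed point of $\phi$ with Julia multiplier $1$, and Lemma~\ref{lemma:angular} applies to $\phi$ with $\alpha=1$. The main (mild) obstacle is the \emph{path-independent} convergence of the second factor to $1/\alpha$: for a general holomorphic self-map Julia's lemma supplies only a $\liminf$, but the automorphism structure of $\eta^{-1}$ produces a genuine limit, which is precisely what the factorization argument needs in order to transfer the multiplier $\alpha$ of $f$ into the multiplier $1$ of $\phi$.
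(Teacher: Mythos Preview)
Your proof is correct and shares its core with the paper's argument: both factor the quotient as
\[
\frac{1-\|\phi(Z)\|}{1-\|Z\|}=\frac{1-\|f(\eta^{-1}(Z))\|}{1-\|\eta^{-1}(Z)\|}\cdot\frac{1-\|\eta^{-1}(Z)\|}{1-\|Z\|},
\]
and both compute the second factor explicitly via the Cayley transform to get the genuine limit $1/\alpha$. The difference is only in bookkeeping. The paper observes that because $\eta^{-1}$ is an automorphism of $\B^N$ fixing $(1,0)$, the substitution $W=\eta^{-1}(Z)$ is a bijection of approaches to $(1,0)$, so the $\liminf$ of the first factor is \emph{equal} to $\alpha$, not merely $\geq\alpha$; this yields $\liminf=\alpha\cdot(1/\alpha)=1$ in one stroke. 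You instead extract only the inequality $\liminf\geq\alpha$ from the first factor and supply the upper bound separately via the horosphere inclusion $\phi(H((1,0),R))\subseteq H((1,0),R)$. That horosphere argument is correct (your citation should really be the converse direction of Theorem~\ref{thm:JuliaN}, i.e.\ that the Julia constant dominates the $\liminf$, which is standard), but it is redundant once one notes the bijection property of $\eta^{-1}$. Either route is fine; the paper's is a line shorter.
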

\begin{proof}
\begin{align*}
\liminf_{z\to (1,0)}\frac{1-\|\phi(z)\|}{1-\|z\|}&=\liminf_{z\to (1,0)}\frac{1-\|f\circ\eta^{-1}(z)\|}{1-\|\eta^{-1}(z)\|}\lim_{z\to (1,0)}\frac{1-\|\eta^{-1}(z)\|}{1-\|z\|}\\
&=\liminf_{z\to (1,0)}\frac{1-\|f(z)\|}{1-\|z\|}\lim_{z\to (1,0)}\frac{1-\|\eta^{-1}(z)\|}{1-\|z\|}=\alpha\cdot\frac{1}{\alpha}=1.
\end{align*}

Since $\eta^{-1}$ is an automorphism that fixes $(1,0)$ and

\[
\lim_{z\to (1,0)}\frac{1-\|\eta^{-1}(z)\|}{1-\|z\|}=\lim_{z\to (1,0)}\frac{1-\|\eta^{-1}(z)\|^2}{1-\|z\|^2}=\lim_{(z,w)\to (0,0)}\frac{1-\|\cC^{-1}(\frac{z}{\alpha},\frac{w}{\sqrt{\alpha}})\|^2}{1-\|\cC^{-1}(z,w)\|^2}
\]

\[
=\lim_{(z,w)\to (0,0)}\frac{1-\left|\frac{1-z/\alpha}{1+z/\alpha}\right|^2-\frac{4\|w\|^2}{\alpha|1+z/\alpha|^2}} {1-\left|\frac{1-z}{1+z}\right|^2-\frac{4\|w\|^2}{|1+z|^2}}=\lim_{(z,w)\to (0,0)}\frac{
\frac{\rea  z-\|w\|^2}{\alpha}}{\rea  z-\|w\|^2}\cdot\frac{|1+z|^2}{\left|1+\frac{z}{\alpha}\right|}=\frac{1}{\alpha}.
\]
\end{proof}

Now consider a normal family $\left\{f_n\circ\tau_n\circ p_1\right\}$, where $p_1(z,w)=(z,0)$.

\begin{claim} The sequence $\tau_n\circ p_1(z,w)\to 0$ is restricted uniformly on compact subsets of $\Hh^N$.
\end{claim}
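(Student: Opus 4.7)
The plan is to compute $\tau_n\circ p_1(z,w)=\tau_n(z,0)$ explicitly and then verify the two defining conditions of Definition~\ref{def:restrictedcurve}, translated into the Siegel coordinates at $0$. Using $\tau_n=h_n^{-1}\circ\delta_n^{-1}$,
\[
\tau_n(z,0)=h_n^{-1}(t_nz,0)=\bigl(t_nz+\|w_n\|^2+iy_n,\,w_n\bigr).
\]
Write $(\zeta_n,\omega_n):=\tau_n(z,0)$, so $\omega_n=w_n$ and $\rea\zeta_n=t_n\rea z+\|w_n\|^2$. Fix a compact set $K\subset\Hh^N$; since $\rea z>\|w\|^2\ge 0$ on $K$, there exist constants $c_K,M_K>0$ with $c_K\le\rea z\le M_K$ and $|z|\le M_K$ for all $(z,w)\in K$.

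Next, I would translate ``restricted'' into $\Hh^N$ via the Cayley map, in the same spirit as the translation of the special condition performed at the end of the proof of Lemma~\ref{mainlemma}. For a sequence $(\zeta_n,\omega_n)\to 0$, restrictedness amounts to: (i) the special condition $\|\omega_n\|^2/\rea\zeta_n\to 0$; and (ii) the non-tangential approach of the first-coordinate projection $(\zeta_n,0)$ to $0$ in $\Hh\times\{0\}$, i.e. $|\zeta_n|/\rea\zeta_n$ bounded.

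For (i), dividing numerator and denominator by $t_n$,
\[
\frac{\|\omega_n\|^2}{\rea\zeta_n}=\frac{\|w_n\|^2/t_n}{\rea z+\|w_n\|^2/t_n}\xrightarrow[n\to\infty]{}0,
\]
using $\|w_n\|^2/t_n\to 0$ from (\ref{wlimit}); the bound $\rea z\ge c_K$ makes this convergence uniform on $K$. For (ii), the same rescaling gives
\[
\frac{|\zeta_n|}{\rea\zeta_n}=\frac{\bigl|z+\|w_n\|^2/t_n+iy_n/t_n\bigr|}{\rea z+\|w_n\|^2/t_n}\xrightarrow[n\to\infty]{}\frac{|z|}{\rea z}\le\frac{M_K}{c_K},
\]
again uniformly on $K$, by (\ref{wlimit}) and (\ref{ylimit}). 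This establishes both conditions uniformly in $(z,w)\in K$, which is exactly the claim.

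The only mild obstacle is bookkeeping the translation of ``restricted'' into the Siegel picture at $0$; once that is in place, everything reduces to the asymptotics for the special backward-iteration sequence already gathered in Lemma~\ref{mainlemma}, and no new estimate on $f$ itself is needed.
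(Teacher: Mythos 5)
Your proof is correct and follows essentially the same route as the paper: compute $\tau_n(z,0)=(t_nz+\|w_n\|^2+iy_n,w_n)$, then check the special condition and the non-tangentiality of the first-coordinate projection by dividing through by $t_n$ and invoking the asymptotics (\ref{wlimit}) and (\ref{ylimit}) from Lemma~\ref{mainlemma}. The added explicit bounds $c_K\le\rea z$, $|z|\le M_K$ on a compact set are a welcome but minor amplification of the uniformity that the paper leaves implicit.
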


\begin{proof} Note that $\tau_n\circ p_1(z,w)=(t_n z+\|w_n\|^2+iy_n,w_n)$.

Following Definition \ref{def:restrictedcurve}, we need to show that $\tau_n\circ p_1(z,w)$ is special in $\Hh^N$:
\[
\lim_{n\to\infty}\frac{\|w_n\|^2}{\rea (t_n z+\|w_n\|^2+iy_n)}=\lim_{n\to\infty}\frac{\frac{\|w_n\|^2}{t_n}}{\rea  z+\frac{\|w_n\|^2}{t_n}}=0,
\]
and that the projection on the first component is non-tangential, i.e that
\[
\frac{\left|t_n z+\|w_n\|^2+iy_n\right|}{\rea (t_n z+\|w_n\|^2+iy_n)}
\]
is bounded above, but
\[
\lim_{n\to\infty}\frac{\left|t_n z+\|w_n\|^2+iy_n\right|}{\rea (t_n z+\|w_n\|^2+iy_n)}=\lim_{n\to\infty}\frac{\left|z+\frac{\|w_n\|^2}{t_n}+i\frac{y_n}{t_n}\right|}{\rea z+\frac{\|w_n\|^2}{t_n}}=\frac{|z|}{\rea  z},
\]
so it is bounded ucss of $\Hh^N$.
\end{proof}

Thus Lemma \ref{lemma:angular} is applicable to the function $\phi=f\circ\eta^{-1}$ and the sequence $\tau_n\circ p_1(z,w)$, which gives us the following 
\begin{lemma}\label{dlemma}
\[
\lim_{n\to\infty}d\left(\tau_n(p_1(z,w)),\phi(\tau_n(p_1(z,w)))\right)=0.
\]
\end{lemma}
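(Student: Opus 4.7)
The plan is to apply Lemma~\ref{lemma:angular} to $\phi=f\circ\eta^{-1}$ at its boundary fixed point $(1,0)\in\partial\B^N$ (with multiplier $1$, as computed in the claim just above) along the restricted curve $Z_n^\ast:=\tau_n(p_1(z,w))\to 0$ (whose restrictedness was just verified), and then feed the resulting asymptotic information into the explicit formula (\ref{dHN}) for the pseudo-hyperbolic distance in $\Hh^N$.

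First I would transfer the two conclusions of Lemma~\ref{lemma:angular} to the Siegel side. Writing $Z_n^\ast=(\zeta_n,\omega_n)$ and $\phi(Z_n^\ast)=(\zeta_n',\omega_n')$, the Cayley relations $1-z=2\zeta/(1+\zeta)$ and $w=\omega(1+z)/2$ (for the transform of Lemma~\ref{mainlemma}, which sends the BRFP $(1,0)$ to $0$) convert the restricted $K$-limit statements of Lemma~\ref{lemma:angular} into
\[
\frac{\zeta_n'}{\zeta_n}\longrightarrow 1,\qquad \frac{\|\omega_n'\|}{|\zeta_n|^{1/2}}\longrightarrow 0.
\]
Combining these with the restrictedness of $Z_n^\ast\to 0$ already proved in the preceding claim, which gives $\|\omega_n\|^2=o(\rea \zeta_n)$ and $|\zeta_n|\le C\,\rea \zeta_n$ for some constant $C$, one extracts $\|\omega_n\|^2,\ \|\omega_n'\|^2=o(|\zeta_n|)$, $|\langle\omega_n,\omega_n'\rangle|\le \|\omega_n\|\|\omega_n'\|=o(|\zeta_n|)$, and $\overline{\zeta_n'}=\overline{\zeta_n}(1+o(1))$.

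Finally, substituting into (\ref{dHN}),
\[
1-d^2\!\left(Z_n^\ast,\phi(Z_n^\ast)\right)=\frac{4(\rea \zeta_n-\|\omega_n\|^2)(\rea \zeta_n'-\|\omega_n'\|^2)}{\bigl|\zeta_n+\overline{\zeta_n'}-2\langle\omega_n,\omega_n'\rangle\bigr|^2},
\]
the numerator equals $4(\rea \zeta_n)^2(1+o(1))$, while the denominator becomes $|2\rea \zeta_n+o(|\zeta_n|)|^2=4(\rea \zeta_n)^2(1+o(1))$; hence the quotient tends to $1$, i.e.\ $d(Z_n^\ast,\phi(Z_n^\ast))\to 0$. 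The main bookkeeping obstacle I foresee is the clean translation of Lemma~\ref{lemma:angular} into Siegel coordinates and the verification that all the error terms remain uniform on compact subsets of $\Hh^N$; this uniformity follows because each estimate depends on $(z,w)$ only through the bounded quantities $\rea z$, $|z|$, and $\|w\|$ on a compact set, and because the restrictedness constants for $\tau_n\circ p_1(z,w)$ were already shown to be locally uniform in the preceding claim.
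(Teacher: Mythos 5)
Your proposal is correct and follows essentially the same route as the paper: apply Lemma~\ref{lemma:angular} to $\phi=f\circ\eta^{-1}$ along the restricted curve $\tau_n\circ p_1(z,w)$, translate the two restricted-$K$-limit conclusions into Siegel coordinates, and plug the resulting asymptotics into the formula (\ref{dHN}) for the pseudo-hyperbolic distance. The only cosmetic difference is the normalization: the paper divides everything by $t_n$ (using $u_n/t_n\to z$) before passing to the limit, whereas you absorb all errors into $1+o(1)$ factors relative to $\rea\zeta_n$ — these are equivalent since $|\zeta_n|\asymp\rea\zeta_n\asymp t_n$ on the relevant curve.
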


\begin{proof}
Denote $(u_n,v_n):=\tau_n(z,0)$ and $(\tilde{u}_n,\tilde{v}_n):=\phi(\tau_n(z,0))$. Then the restricted $K$-limits (1) and (2) in Lemma \ref{lemma:angular} in when 
translated to $\Hh^N$ become

\[
\lim_{n\to\infty}\frac{\tilde{u}_n}{u_n}=1\ \ \ \ \mbox{and}\ \ \ \  \lim_{n\to\infty}\frac{\|\tilde{v}_n\|^2}{u_n}=0.
\]

Since $\displaystyle\lim_{n\to\infty}\frac{u_n}{t_n}=z$,

\[
\lim_{n\to\infty}\frac{\tilde{u}_n}{t_n}=z\ \ \ \ \mbox{and}\ \ \ \  \lim_{n\to\infty}\frac{\|\tilde{v}_n\|^2}{t_n}=0.
\]

Now $\displaystyle d((u_n,v_n),(\tilde{u}_n,\tilde{v}_n))^2=1-\frac{4(\rea  u_n-\|v_n\|^2)(\rea  \tilde{u}_n-\|\tilde{v}_n\|^2)}{|\tilde{u}_n+\bar{u}_n-2\left\langle\tilde{v}_n,v_n\right\rangle|^2}$.

\begin{align*}
\lim_{n\to\infty}\frac{4(\rea  u_n-\|v_n\|^2)(\rea  \tilde{u}_n-\|\tilde{v}_n\|^2)}{\left|\tilde{u}_n+\bar{u}_n-2\left\langle\tilde{v}_n,v_n\right\rangle\right|^2}&=\lim_{n\to\infty}\frac{4(\rea  \frac{u_n}{t_n}-\frac{\|v_n\|^2}{t_n})(\rea  \frac{\tilde{u}_n}{t_n}-\frac{\|\tilde{v}_n\|^2}{t_n})}{\left|\frac{\tilde{u}_n}{t_n}+\frac{\bar{u}_n}{t_n}-2\left\langle\frac{\tilde{v}_n}{\sqrt{t_n}},\frac{v_n}{\sqrt{t_n}}\right\rangle\right|^2}\\
&=\frac{4(\rea z-0)(\rea  z-0)}{\left|z+\bar{z}+0\right|^2}=1,
\end{align*}

and 
\[
\lim_{n\to\infty}d(\tau_n(z,0),\phi(\tau_n(z,0)))=0.
\]
\end{proof}

\begin{proof}[Proof of Theorem \ref{thm:mainconjN}]
Consider the normal family $\left\{f_n\circ\tau_n\circ p_1\right\}$ and let $\psi$ be one of its normal limits. Then, by Schwarz's lemma
\begin{align}
d(f_n\circ\tau_n(z,0),f_{n+1}\circ\tau_{n+1}(z,0))\leq d(\tau_n(z,0),f\circ\tau_{n+1}(z,0))\notag\\ 
\leq d(\tau_n(z,0),f\circ\eta^{-1}\circ\tau_n(z,0))+d(\eta^{-1}\circ\tau_n(z,0),\tau_{n+1}(z,0)).\label{d}
\end{align}
The first summand in (\ref{d}) tends to zero by lemma \ref{dlemma}, and the second does by part (2) of lemma \ref{taulemma}, so 
\[
d(f_n\circ\tau_n(z,0),f_{n+1}\circ\tau_{n+1}(z,0))\to 0
\]
as $n$ tends to infinity. It follows that if a subsequence $\left\{f_{n_k}\circ\tau_{n_k}\circ p_1\right\}$ converges ucss of $\Hh^N$ to $\psi$, then so does $\left\{f_{n_{k}+1}\circ\tau_{n_{k}+1
}\circ p_1\right\}$. By construction
\[
f_{n_{k}+1}\circ\tau_{n_{k}+1}\circ p_1=f\circ f_{n_k}\circ\tau_{n_{k}+1}\circ p_1,
\]
where the left hand-side tends to $\psi$, and it is enough to show that $f_{n_k}\circ\tau_{n_k+1}\circ p_1\to\psi\circ\eta^{-1}$ to prove (\ref{conjugation}). Note that $\eta^{-1}$ and $p_1$ are linear functions with diagonal matrices and therefore commute, so $f_{n_k}\circ\tau_{n_k}\circ\eta^{-1}\circ p_1\to\psi\circ\eta^{-1}$ and it is enough to show that
\[
d\left(f_{n_k}\circ\tau_{n_k}\circ\eta^{-1}\circ p_1(Z),f_{n_k}\circ\tau_{n_k+1}\circ p_1(Z)\right)\to 0.
\]
Applying Schwarz's lemma again,
\[
d\left(f_{n_k}\circ\tau_{n_k}\circ\eta^{-1}\circ p_1(Z),f_{n_k}\circ\tau_{n_k+1}\circ p_1(Z)\right)\leq
d\left(\tau_{n_k}\circ\eta^{-1}(z,0),\tau_{n_k+1}(z,0)\right)
\]
\[
=d\left(\tau^{-1}_{n_k+1}\circ\tau_{n_k}\circ\eta^{-1}(z,0),(z,0)\right)\to 0
\]
by statement (1) of Lemma \ref{taulemma}, so we have
\[
\psi=f\circ\psi\circ\eta^{-1},
\]
which is equivalent to (\ref{conjugation}).

All we are left to show is that $\psi$ fixes $0$. Note that the image of $\left(\frac{\alpha^k-1}{\alpha^k+1},0\right)$ under the Cayley transform is $a_k=\left(\alpha^{-k},0\right)$ and that $p_1(a_k)=a_k$. Then by definition of the sequence $Z_n$ and $\tau_n$ and Schwarz's lemma
\[
d\left(f_n\circ\tau_n(a_k),Z_k\right)=d\left(f_n\circ\tau_n(a_k),f_n(Z_{n+k})\right)\leq d\left(a_k,\tau^{-1}_n\circ\tau_{n+k}(1,0)\right)
\]
\[
=d\left(\eta^{-1}_k(1,0),\tau^{-1}_n\circ\tau_{n+k}(1,0)\right)\to 0,
\]
for any $k=1,2,\ldots$ as $n$ tends to infinity, by (1) of lemma \ref{taulemma}. Thus we have
\[
\psi(a_k)=Z_k.
\]
Define the sequence
\begin{align}
g_n(Z):=\tau^{-1}_n\circ\psi\circ\eta^{-1}_n(Z).\label{gndefinition}
\end{align}
Then $g_n((1,0))=(1,0)$ and $g_n(a_1)=\tau^{-1}_n(\tau_{n+1}(1,0))\to\eta^{-1}(1,0)=a_1$, as n tends to infinity. Hence any normal limit of ${g_n}$ fixes $(1,0)$ and $a_1$, and, by Corollary (2.2.15) from \cite{Abate}, must fix the entire subspace, containing $(1,0)$ and $a_1$, i.e. the set $\left\{(z,0)\in\Hh^N\right\}$. Note that $\psi(z,w)=\psi(z,0)$ and by (\ref{gndefinition}) $g_n(z,w)=g_n(z,0)$, so $g_n\to p_1$.

Consider a straight line segment connecting $(1,0)$ and $(0,0)$. Obviously it is special curve and by theorem (2.2.25) from \cite{Abate} $\psi$ will have restricted $K$-limit $0$ at $0$ if
\begin{align}
\displaystyle\lim_{t\to 0}\psi(t,0)=0.\label{psilimit}
\end{align}
By (\ref{gndefinition}), $\psi=\tau_n\circ g_n\circ\eta_n$. Consider a straight line segment connecting $(\alpha^{-(n+1)},0)$ to $(\alpha^{-n},0)$. It will be mapped by $\eta_n$ to a segment $[(\alpha^{-1},0),(1,0)]$. Pick a point $(t,0)$ on this segment. Then
\begin{align*}
\left\|\tau_n(g_n(t,0))\right\|\leq\left\|\tau_n(g_n(t,0))-\tau_n(t,0)\right\|+\left\|\tau_n(t,0)\right\|\xrightarrow[n\to\infty]{}0,
\end{align*} 
since $g_n(t,0)\to (t,0)$, $\tau_n(t,0)\to 0$ uniformly in $t$ and $\tau_n'$ is bounded, and (\ref{psilimit}) follows.

Now we can show that $\left\{f_n\circ\tau_n\circ p_1\right\}$ actually converges to $\psi$. By Schwarz's lemma, (\ref{conjugation}) and (\ref{gndefinition})
\begin{align*}
&d\left(f_n\circ\tau_n\circ p_1(z,w),\psi(z,w)\right)=d\left(f_n\circ\tau_n\circ p_1(z,w),\psi\circ\eta_n\circ\eta^{-1}_n(z,w)\right)\\
= &d\left(f_n\circ\tau_n\circ p_1(z,w),f_n\circ\psi\circ\eta^{-1}_n(z,w)\right)\leq
d\left(\tau_n\circ p_1(z,w),\psi\circ\eta^{-1}_n(z,w)\right)\\
= &d\left(p_1(z,w),g_n(z,w)\right)\xrightarrow[n\to\infty]{}0.
\end{align*}
\end{proof}
\end{section}

\begin{section}{\bf Conjugation for expandable maps}

In this section we will provide conjugation for the maps with some regularity at the BRFP. This class of maps was introduced in \cite{BG}:

\begin{definition}\label{def:expandable}
Let $f:\Hh^N\to\Hh^N$ be holomorphic. We will call the map $f$ {\em expandable} at $0$ (write $f\in\cE^{1}_{\Hh^N}(0)$), if $f$ has the following expansion near $0$:
\[
f(z,w)=(\alpha z+o(|z|), Aw+o(|z|^{1/2})).
\]
In particular, $0$ is a fixed point of $f$.
\end{definition}
By applying part (1) of Lemma \ref{lemma:angular} to any special sequence $(z_n,w_n)\to 0$, we obtain
\[
\lim_{n\to\infty}\frac{\alpha z_n+o(|z_n|)}{z_n}=\alpha,
\]
i.e. $\alpha$ must be the dilation coefficient of $f$ at $0$.

\begin{remark}
Note that $A$ cannot have eigenvalues $|a_{j,j}|^2>\alpha$, because otherwise $f(\Hh^N)\not\subset \Hh^N$.
\end{remark}

\begin{proof}[Proof of Theorem \ref{thm:conjexpand}] The construction is essentially the same as in section 4. We modify the definition of $\tau_n$ as follows: $\tau_n:=\Omega^{-n}\circ h_n^{-1}\circ\delta_n^{-1}$, where $\Omega$ is as in the statement of Theorem \ref{thm:conjexpand}. The following two limits are generalization of lemma \ref{taulemma}:
\[
\tau_{n+k}^{-1}\circ\tau_n(z,w)=\delta_{n+k}\circ h_{n+k}\circ\Omega^{k}\circ h_n^{-1}\circ\delta_n^{-1}(z,w)=
\]
\[
\left(\frac{t_n}{t_{n+k}}z+\frac{\|w_n\|^2}{t_{n+k}}+i\frac{y_n}{t_{n+k}}+2\frac{\sqrt{t_n}}{t_{n+k}}
\left\langle w,w_n\right\rangle+\frac{\|w_{n+k}\|^2}{t_{n+k}}-i\frac{y_{n+k}}{t_{n+k}}
-\frac{2}{t_{n+k}}\left\langle \Omega^{k}(\sqrt{t_n}w+w_n),w_{n+k}\right\rangle, \right.
\]
\[
\left. \frac{\Omega^{k}(\sqrt{t_n}w+w_n)-w_{n+k}}{\sqrt{t_{n+k}}}\right)\xrightarrow[n\to\infty]{}\left({\alpha}^k z,\Omega^{k}{\alpha}^{k/2}w\right)=:\eta_k(z,w).
\]

(Here $\eta_k$ differs from previous $\eta_k$ by rotation by $\Omega^{k}$.)

\[
\tau_{n+1}^{-1}\circ\eta^{-1}\circ\tau_n(z,w)=\delta_{n+1}\circ h_{n+1}\circ\Omega^{n+1}\circ\eta^{-1}\circ\Omega^{-n}\circ h_n^{-1}\circ\delta_n^{-1}(z,w)=
\]
\[
\left(\frac{t_n}{t_{n+1}{\alpha}}z+\frac{\|w_n\|^2}{t_{n+1}{\alpha}}+i\frac{y_n}{t_{n+1}{\alpha}}+2\frac{\sqrt{t_n}}{t_{n+1}{\alpha}}
\left\langle w,w_n\right\rangle+\frac{\|w_{n+1}\|^2}{t_{n+1}}-i\frac{y_{n+1}}{t_{n+1}}
-\frac{2}{t_{n+1}}\left\langle \sqrt{t_n}w+w_n,w_{n+1}\right\rangle, \right.
\]
\[
\left. \frac{\sqrt{t_n}w+w_n}{\sqrt{t_{n+1}}\sqrt{\alpha}}-\frac{w_{n+1}}{\sqrt{t_{n+1}}}\right)\xrightarrow[n\to\infty]{}\left(z,w\right).
\]

Now $\phi(z,w):=f\circ\eta^{-1}(z,w)=f({\alpha}^{-1}z,\Omega^{-1}{\alpha}^{-1/2}w)=(z+o(|z|),\frac{\Omega^{-1}A}{\sqrt{\alpha}}w+o(|z|^{1/2}))$.

Let $p_L(z,w)=(z,w_1,\ldots,w_L,0,\ldots,0)$, i.e. projection on the first $1+L$ dimensions.

Denote $(u_n,v_n):=\tau_n(p_L(z,w))$ and $(\tilde{u}_n,\tilde{v}_n):=\phi(\tau_n(p_L(z,w)))$. Then $u_n=t_n z+\|w_n\|^2+iy_n+2\left\langle \sqrt{t_n}p_L(w),w_n\right\rangle$ and $v_n=\Omega^{-n}(\sqrt{t_n}p_L(w)+w_n)$. Since

\[
\lim_{n\to\infty}\frac{u_n}{t_n}=\lim_{n\to\infty}\frac{t_n z+\|w_n\|^2+iy_n+2\left\langle \sqrt{t_n}p_L(w),w_n\right\rangle}{t_n}=z,
\]

$o(|u_n|)=o(t_n)$ and $o(|u_n|^{1/2})=o(\sqrt{t_n})$, and, consequently,
$\tilde{u}_n=u_n+o(t_n)$ and

\[ \tilde{v}_n=\frac{\Omega^{-1}A}{\sqrt{\alpha}}v_n+o(\sqrt{t_n})=\frac{\Omega^{-(n+1)}A\sqrt{t_n}}{\sqrt{\alpha}}p_L(w)+\frac{\Omega^{-(n+1)}A}{\sqrt{\alpha}}w_n+o\sqrt{t_n})=\Omega^{-n}\sqrt{t_n}p_L(w)+o(\sqrt{t_n}).
\]

The pseudo-hyperbolic distance in $\Hh^N$ is 

\[
 d^2((u_n,v_n),(\tilde{u}_n,\tilde{v}_n))=1-\frac{4(\rea  u_n-\|v_n\|^2)(\rea  \tilde{u}_n-\|\tilde{v}_n\|^2)}{|\tilde{u}_n+\bar{u}_n-2\left\langle\tilde{v}_n,v_n\right\rangle|^2},
\]
and because
\begin{align*}
&\lim_{n\to\infty}\frac{4(\rea  u_n-\|v_n\|^2)(\rea  \tilde{u}_n-\|\tilde{v}_n\|^2)}{\left|\tilde{u}_n+\bar{u}_n-2\left\langle\tilde{v}_n,v_n\right\rangle\right|^2}\\
=&\lim_{n\to\infty}\frac{(\rea \frac{u_n}{t_n}-\frac{\|v_n\|^2}{t_n})(\rea \frac{u_n}{t_n}+\frac{o(t_n)}{t_n}-\|\Omega^{-n}p_L(w)+\frac{o(\sqrt{t_n})}{\sqrt{t_n}}\|^2)}{\left|\rea \frac{u_n}{t_n}+\frac{o(t_n)}{t_n}-\left\langle\Omega^{-n}p_L(w)+\frac{o(\sqrt{t_n})}{\sqrt{t_n}},\frac{v_n}{\sqrt{t_n}}\right\rangle\right|^2}\\
=&\frac{(\rea  z -\|p_L(w)\|^2)(\rea  z -\|p_L(w)\|^2)}{\left|\rea  z-\left\langle \Omega^{-n}p_L(w),\Omega^{-n}p_L(w)\right\rangle\right|^2}=1,
\end{align*}

$d^2((u_n,v_n),(\tilde{u}_n,\tilde{v}_n))\to 0$, i.e. conclusion analogous to the statement of lemma \ref{dlemma} holds.

Now define $\psi$ as one of the normal limits of $\left\{f_n\circ\tau_n\circ p_L\right\}$. The above computations shows that if $f_{n_k}\circ\tau_{n_k}\circ p_L$ converges to $\psi$, then $f_{n_k+1}\circ\tau_{n_k+1}\circ p_L$ also converges to $\psi$.
It is enough to show that $f_{n_k}\circ\tau_{n_k+1}\circ p_L$ converges to $\psi\circ\eta^{-1}$ uniformly on compact subsets of $\Hh^N$. Note that $\eta^{-1}\circ p_L=p_L\circ\eta^{-1}$. Because
\begin{align*}
d\left(f_{n_k}\circ\tau_{n_k}\circ\eta^{-1}\circ p_L(z,w),f_{n_k}\circ\tau_{n_k+1}\circ p_L(z,w)\right)\\
=d\left(\tau_{n_k+1}^{-1}\circ \tau_{n_k}\circ\eta^{-1}\circ p_L(z,w),p_L(z,w)\right)\xrightarrow[n\to\infty]{}0,
\end{align*}
\[
\lim_{n\to\infty}f_{n_k}\circ\tau_{n_k+1}\circ p_L(z,w)=\lim_{n\to\infty}f_{n_k}\circ\tau_{n_k}\circ\eta^{-1}\circ p_L(z,w)=
\psi\circ\eta^{-1}(z,w),
\]
and (\ref{conjugation}) holds.

By the same reasoning as in proof of Theorem (\ref{thm:mainconjN}), $\psi$ fixes $0$ in the sense of restricted $K$-limits.
\end{proof}

\begin{remark} Note that in the case when eigenvalues of $A$ are equal to $\sqrt{\alpha}$, $f$ will be conjugated to same automorphism $\eta$ as in Theorem \ref{thm:mainconjN}, but the intertwining map $\psi$ will be different (its image needs not be one-dimensional).
\end{remark}

\begin{remark}
Consider the hyperbolic map $f:\Hh^N\to\Hh^N$ with the Denjoy-Wolff point infinity and BRFP $0$ with multiplier $1<\alpha<\infty$ : $f(z,w)=(\alpha z,0)$. Clearly, the image of $f$ is one-dimensional and from (\ref{conjugation}) we have that image of $\psi$ must be one-dimensional, so the result of Theorem \ref{thm:mainconjN} cannot be improved in general. For less trivial example, one may consider $f(z,w)=(\alpha z, \beta w)$ with $0<|\beta|^2<\alpha$. Now the image of $f$ has dimension $N$, but 
\[
\bigcap_{n=1}^{\infty}f_n(\Hh^N)
\]
is one-dimensional section of $\Hh^N$ and the range of the intertwining map $\psi$ is also one-dimensional.
\end{remark}
\end{section}

\begin{section}{\bf Examples and open questions}

\begin{subsection}{Examples}
In the beginning of this section we will describe all quadratic polynomials that map the two-dimensional Siegel domain $\Hh^2=\left\{(z,w)\in\C^2\left|\rea z>|w|^2\right.\right\}$ into itself while fixing zero, and completely characterize their dynamics. Some of these polynomials happen to have non-isolated BRFPs (see Example \ref{ex:quadpol}).

\begin{claim}
A quadratic polynomial $f:\C^2\to\C^2$ that fixes zero maps  $\Hh^2$ into $\Hh^2$ if and only if it is of the form $f(z,w)=(Az+Bw^2,Cw)$ with $A-|B|\geq |C|^2$.
\end{claim}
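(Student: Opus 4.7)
My plan is to prove both directions. Sufficiency is a direct computation: whenever $A-|B|\geq|C|^2$ and $\rea z>|w|^2$,
\[
\rea(Az+Bw^2)\geq A\rea z-|B||w|^2>(A-|B|)|w|^2\geq|C|^2|w|^2=|Cw|^2,
\]
so $f(z,w)\in\Hh^2$. The substance lies in necessity.

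For necessity I will write $f=(P,Q)$ with
\[
P(z,w)=p_1z+p_2w+p_3z^2+p_4zw+p_5w^2,\qquad Q(z,w)=q_1z+q_2w+q_3z^2+q_4zw+q_5w^2,
\]
and use the self-map condition $\rea P-|Q|^2>0$ on $\Hh^2$ (which extends to $\geq 0$ on $\overline{\Hh^2}$ by continuity). The strategy is to plug in families of test points and kill the forbidden coefficients one at a time. Starting with the slice $w=0$, the inequality becomes $\rea(p_1z+p_3z^2)\geq|q_1z+q_3z^2|^2$ on the closed right half-plane. Along $z=x\to+\infty$ a degree comparison forces $q_3=0$ and then yields $\rea p_3\geq|q_1|^2$; along $z=iy$ with $|y|\to\infty$ the reversed inequality $-\rea p_3\geq|q_1|^2$ appears, so $\rea p_3=0$ and $q_1=0$. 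The remaining boundary inequality at $z=iy$ collapses to $-\ima p_1\cdot y\geq 0$ for all $y\in\R$, forcing $\ima p_1=0$; a final interior test along $z=x+iy$ with $x>0$ kills the imaginary part of $p_3$. This leaves $P(z,0)=Az$, $Q(z,0)=0$ with $A:=p_1\geq 0$.

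Next I reinstate $w\neq 0$ and send $z=x+iy$ to infinity in the imaginary direction with $x>|w|^2$ fixed. Then $\rea P$ is affine in $y$ with slope $-\ima(p_4w)$, while $|Q|^2$ is quadratic in $y$ with leading coefficient $|q_4w|^2$, so first $q_4=0$ and then $\ima(p_4w)=0$ for every $w\in\C$, hence $p_4=0$. Approach to the origin along the boundary arc $z=r^2,\ w=re^{i\theta}$ followed by division by $r$ and $r\downarrow 0$ yields $\rea(p_2 e^{i\theta})\geq 0$ for every $\theta$, hence $p_2=0$. Finally, blowing up along an interior slice $\rea z=|w|^2+1$ with $|w|\to\infty$ forces $q_5=0$, because $\rea P=O(|w|^2)$ cannot accommodate the $|q_5|^2|w|^4$ term in $|Q|^2$.

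At that point $P=Az+Bw^2$ and $Q=Cw$ survive, with $B:=p_5$, $C:=q_2$ and $A\geq 0$, and the remaining boundary inequality $A|w|^2+\rea(Bw^2)\geq|C|^2|w|^2$ evaluated at the worst $\arg w$ (making $\rea(Bw^2)=-|B||w|^2$) gives $A-|B|\geq|C|^2$. There is no genuine obstacle; the main task is bookkeeping, namely making sure every coefficient has been eliminated by an appropriate test point, each individual elimination being a one-line limit in a well-chosen direction.
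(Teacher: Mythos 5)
Your proof is correct and follows the same overall strategy as the paper's: sufficiency by the one-line estimate, necessity by eliminating the forbidden coefficients one at a time via limits along well-chosen families. The implementations differ in detail — you extend the inequality to $\overline{\Hh^2}$ by continuity and then use the boundary slices $z=iy$, the boundary arc $z=r^2,\,w=re^{i\theta}$, and the interior slice $\rea z=|w|^2+1$, whereas the paper stays in the open domain and uses curves of the form $(t^{2\pm\epsilon},|w|=t)$ with adversarially chosen $\Arg w$ — but these are two renderings of the same coefficient-killing argument, and your version is, if anything, slightly cleaner because passing to the closure lets you work on genuine boundary rays rather than perturbed ones.
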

\begin{proof}
Consider the general form $f(z,w)=(f_1(z,w),f_2(z,w))=(az+bw+cz^2+dzw+ew^2,Az+Bw+Cz^2+Dzw+Ew^2)$. First we will show that most coefficients must be $0$.

Since $\rea f_1(z,w)>|f_2(z,w)|^2\geq 0$, then $\rea f_1(z,0)=\rea(az+cz^2)>0$ $\forall z$ such that $\rea z>0$. When $z\to 0$, $az+cz^2\sim az$, so $a>0$. Now $\rea f_1(z,0)=|z|(a\cos(\Arg z)+|c||z|\cos(2\Arg z+\Arg c))$, we can choose $\Arg z$ such that $\cos(2\Arg z+\Arg c)<0$ and $|z|$ large enough so $\rea f_1(z,0)<0$ unless $|c|=0$, so $c$ must be $0$. 

Thus $f(z,0)=(az,Az+Cz^2)$, and we must have $a|z|\cos(\Arg z)>|z|^2|A+Cz|^2$ or $a\cos(\Arg z)>|z||A+Cz|^2$. The right hand side goes to $\infty$ as $|z|\to\infty$ unless $C=A=0$.

Thus $f$ must be of the form $f(z,w)=(az+bw+dzw+ew^2,Bw+Dzw+Ew^2)$. Consider the set $\left\{(t,1)\in\C^2\left|\right.t>1\right\}\subset\Hh^2$. $f(t,1)=(at+dt+b+e,B+E+Dt)$ and $\rea(at+dt+b+e)<\left|B+E+Dt\right|^2$ for large enough $t$ unless $D=0$.

Now consider the set $\left\{(t^2+\epsilon,t)\in\C^2\left|\right.t>0\right\}\subset\Hh^2$. $\rea f_1(t^2+\epsilon,t)\leq a(t^2+\epsilon)+|b|t+|d|(t^2+\epsilon)t+|e|t^2<|Bt+Et^2|^2$ for large enough $t$ unless $E=0$.

To show that $d=0$, consider $\left\{(z,w)\in\C^2\left|\right.z=t^{2+\epsilon},|w|=t, t>1 \right\}\subset\Hh^2$. Then on this set $\rea f_1(z,w)\leq at^{2+\epsilon}+|b|t+|e|t^2+|d|t^{3+\epsilon}\cos\left(\Arg{d}+\Arg{w}\right)$. We can choose $\Arg{w}$ such that $\cos\left(\Arg{d}+\Arg{w}\right)<0$ and $t$ large enough to make $\rea f_1(z,w)< 0$, unless $d=0$.

The last part is to show that $b=0$. Consider $\left\{(z,w)\in\C^2\left|\right.z=t^{2-\epsilon},|w|=t, 1>t>0 \right\}\subset\Hh^2$. Then on this set $\rea f_1(z,w)\leq at^{2-\epsilon}+|b|t\cos\left(\Arg{b}+\Arg{w}\right)+|e|t^2$. We can choose $\Arg{w}$ such that $\cos\left(\Arg{b}+\Arg{w}\right)<0$ and $t$ close enough to $0$ such that $\rea f_1(z,w)<0$ unless $b=0$.

Thus $f$ has only three nonzero terms, and (by changing notations) the function must have form $f(z,w)=(Az+Bw^2,Cw)$. If $A-|B|\geq |C|^2$, then $\rea(Az+Bw^2)\geq A\rea{z}-|B||w|^2>(A-|B|)|w|^2\geq |C|^2|w|^2$ on $\Hh^2$ and hence $f(\Hh^2)\subseteq\Hh^2$. If $A-|B|<|C|^2$, we can choose $\Arg w$ such that $\cos\left(\Arg{B}+2\Arg{w}\right)=-1$ and $\rea{z}=|w|^2+\frac{\epsilon}{A}|w|^2$, where $\epsilon=|C|^2-A+|B|>0$, and then  $\rea(Az+Bw^2)=A\rea{z}+|B||w|^2\cos\left(\Arg{B}+2\Arg{w}\right)=A\rea{z}-|B||w|^2=(A-|B|+\epsilon)|w|^2=|C|^2|w|^2$, thus $f(\Hh^2)\not\subseteq\Hh^2$.
\end{proof}

\begin{claim}

\begin{enumerate}
\item{Aside from the trivial cases $A=0$ (must be zero map, because then $B=C=0$) and $C=0$ (one-dimensional projection) $f(z,w)=(Az+Bw^2,Cw)$ has well-defined inverse on $\Hh^2$
\[
f^{-1}(z,w)=\left(\frac{z}{A}-\frac{B}{AC^2}w^2,\frac{w}{C}\right)
\]
(though its image may be outside of the Siegel domain).}
\item{$n^{th}$ iterate of $f$ has the form 
\[
f^{\circ n}(z,w)=\left(A^nz+\frac{A^n-C^{2n}}{A-C^2}Bw^2,C^nw\right).
\]}
\end{enumerate}
\end{claim}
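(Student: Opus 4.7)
The plan is to dispatch the trivial cases first, then verify both formulas by a direct algebraic check, with the iterate formula being proved by induction on $n$.

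For the trivial cases: if $A=0$, the constraint $A-|B|\geq|C|^2\geq 0$ forces $|B|\leq 0$ and $|C|^2\leq 0$, hence $B=C=0$ and $f\equiv 0$. If $C=0$, then $f(z,w)=(Az+Bw^2,0)$, so $f(\Hh^2)\subseteq\{(z,0)\}$ lies in a one-dimensional slice. In what follows assume $AC\neq 0$.

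For part (1), I would simply substitute the candidate inverse into $f$ in both orders. Computing
\[
f\!\circ\! f^{-1}(z,w)=f\!\left(\frac{z}{A}-\frac{Bw^2}{AC^2},\frac{w}{C}\right)=\left(A\cdot\frac{z}{A}-\frac{Bw^2}{C^2}+B\frac{w^2}{C^2},\,C\cdot\frac{w}{C}\right)=(z,w),
\]
and similarly $f^{-1}\!\circ\! f(z,w)=(z,w)$, which establishes the formula (nothing is claimed about $f^{-1}(\Hh^2)\subseteq\Hh^2$, so this is all that is needed).

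For part (2), induct on $n$. The base case $n=1$ gives $A^1 z+\frac{A-C^2}{A-C^2}Bw^2=Az+Bw^2$ and $C^1 w=Cw$, matching $f$. For the inductive step, assume the stated form for $f^{\circ n}$; then
\[
f^{\circ(n+1)}(z,w)=f\!\left(A^n z+\frac{A^n-C^{2n}}{A-C^2}Bw^2,\,C^n w\right)=\left(A^{n+1}z+\frac{A(A^n-C^{2n})}{A-C^2}Bw^2+BC^{2n}w^2,\,C^{n+1}w\right).
\]
The only non-trivial point is the identity
\[
\frac{A(A^n-C^{2n})}{A-C^2}+C^{2n}=\frac{A^{n+1}-AC^{2n}+AC^{2n}-C^{2(n+1)}}{A-C^2}=\frac{A^{n+1}-C^{2(n+1)}}{A-C^2},
\]
which collapses the two $Bw^2$ terms into the required coefficient and closes the induction.

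There is no real obstacle here, only a minor subtlety: in the exceptional case $A=C^2$ the quotient $\frac{A^n-C^{2n}}{A-C^2}$ is indeterminate and should be interpreted as its limit $nA^{n-1}$, so the iterate reads $f^{\circ n}(z,w)=(A^n z+nA^{n-1}Bw^2,\,C^n w)$; the same induction (with $\frac{A(A^n-C^{2n})}{A-C^2}+C^{2n}$ replaced by $nA^n+A^n=(n+1)A^n$) handles this degenerate case identically.
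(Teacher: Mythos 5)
Your proof is correct and matches the paper's approach exactly: the paper itself simply asserts that part (1) is obvious and part (2) follows by induction, and you have spelled out the direct verification and the inductive step. Your observation about the degenerate case $A=C^2$, where $\tfrac{A^n-C^{2n}}{A-C^2}$ must be read as $nA^{n-1}$, is a genuine subtlety that the paper's formula glosses over, and your check that the same induction still closes in that case is a welcome addition.
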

\begin{proof}
(1) is obvious. (2) can be shown by induction.
\end{proof}

Now we will find fixed points and classify the dynamical behavior of polynomials based on them.

Cases $C=0$ (projection on the first dimension) and $B=0$ (linear map) are trivial. So assume $B\neq 0$ and $C\neq 0$. To find the set of finite fixed points (either inner or boundary) we need to solve

\[
\begin{cases}
Az+Bw^2=z\\
Cw=w
\end{cases}
\]

If $C=1$, we can assume $A>1$ (otherwise $B=0$ and the map is identity). Then there are solutions $\displaystyle\left(-\frac{Bw^2}{A-1},w\right)$. Since
\[
\rea\left(-\frac{Bw^2}{A-1}\right)-|w|^2\leq\frac{|B||w|^2}{A-1}-|w|^2=\frac{|B|+1-A}{A-1}|w|^2\leq 0,
\]
any solution must be on the boundary of $\Hh^2$ and nonzero solutions exist iff $A=|B|+1$. In this case, there are infinitely many fixed points on the boundary (see Example (\ref{ex:quadpol}) below).

If $C\neq 1$ then nonzero solutions exist iff $A=1$ and they have form $\displaystyle\left(z,0\right)$. Thus we have interior fixed points.

If $C\neq 1$ and $A\neq 1$ then there are no fixed points inside of the domain and only two fixed points on the boundary (zero and infinity). One of them must be the Denjoy-Wolff point and the other BRFP.

The dilatation coefficient at $(0,0)$ is 
\begin{align*}
c=\liminf_{(z,w)\to(0,0)}\frac{\rea(Az+Bw^2)-|C|^2|w|^2}{\rea z-|w|^2} &\geq\liminf_{(z,w)\to(0,0)}\frac{A\rea{z}-|B||w|^2-|C|^2|w|^2}{\rea z-|w|^2}\\
& \geq\liminf_{(z,w)\to(0,0)}\frac{A\rea{z}-A|w|^2}{\rea{z}-|w|^2}=A
\end{align*}
and value $A$ attained for $z=t\to 0$ and $w=0$, so $c=A$.

Thus if $A<1$ then zero is the Denjoy-Wolff point of $f$ and this is hyperbolic case $c=A<1$. If $A>1$ then $(0,0)$ is the BRFP with dilatation coefficient $A>1$ and infinity must be the Denjoy-Wolff point. The dilatation coefficient at infinity
\begin{align*}
c=\liminf_{(z,w)\to\infty}\frac{\rea(Az+Bw^2)-|C|^2|w|^2}{\rea z-|w|^2}\frac{|z+1|^2}{|Az+Bw^2+1|^2} &\leq\lim_{t\to\infty}\frac{A\rea{t}}{t}\frac{|t+1|^2}{|At+1|^2}=\frac{1}{A},
\end{align*}
thus $c\leq\frac{1}{A}<1$ and this is also hyperbolic case.

\begin{example}[Example of a quadratic function with non-isolated BRFP]\label{ex:quadpol}
Consider the function $f(z,w):=(2z+w^2,w)$. Then $f^{\circ n}(z,w)=(2^n z+(2^n-1)w^2,w)$, the Denjoy-Wolff point is infinity and this is the hyperbolic case. The curve $\{ (r^2,ir)|r\in\R\}$ is clearly the set of fixed points on the boundary. Any of those points can be mapped to $(0,0)$ by translation
\[
h_r(z,w):=(z+r^2+2irw,w-ir)
\]
with
\[
h_r^{-1}(z,w)=(z+r^2-2irw,w+ir)
\]
Then 
\[
h_r\circ f\circ h_r^{-1}(z,w)=h_r\circ f(z+r^2-2irw,w+ir)=h_r(2z+r^2-2irw+w^2,w+ir)=(2z+w^2,w),
\]
i.e. the behavior of the function in any of those points is the same as in $(0,0)$.

The dilatation coefficient at zero is
\[
c=\liminf_{(z,w)\to(0,0)}\frac{\rea(2z+w^2)-|w|^2}{\rea z-|w|^2}=1+\liminf_{(z,w)\to(0,0)}\frac{\rea{z}+\rea(w^2)}{\rea{z}-|w|^2}=2.
\]
Thus we have a set of BRFP's on the boundary with the same dilatation coefficient, neither of them is isolated.
\end{example}

\begin{remark}\label{rem:nonisol}
Though $(0,0)$ is non-isolated BRFP for $f(z,w):=(2z+w^2,w)$, the statement of Lemma (\ref{lemma:mainlemma}) still holds in this case. $Z_n=(\frac{1}{2^n},0)$ is clearly an example of backward-iteration sequence with step $d=\frac{1}{3}$. Consequently, it is still possible to construct a conjugation as in Theorem \ref{thm:mainconjN}. 
\end{remark}

Now we will describe another class of self-maps of $\Hh^2$, the construction of these will be based on a function of one-dimensional half-plane $\Hh$.

\begin{example}\label{ex:1to2dim}
Let $\phi:\Hh\to\Hh$ be a holomorphic function of right-hand side half-plane, of hyperbolic or parabolic type, with the Denjoy-Wolff point infinity. Define a function $f$ on $\Hh^2$ as $f(z,w):=(\phi(z-w^2)+w^2,w)$. This function is well-defined since $\forall (z,w)\in\Hh^2$ $\rea(z-w^2)\geq\rea z-|w|^2>0$. Moreover, by Julia's lemma in $\Hh$, $\rea\phi(z-w^2)\geq\rea(z-w^2)$ and thus $\rea(\phi(z-w^2)+w^2)\geq\rea z>|w|^2$, and the function $f$ maps $\Hh^2$ into itself.
\begin{claim}
Infinity is the Denjoy-Wolff point for $f$ and $f$ has the same type and same multiplier at infinity as $\phi$. Moreover, if $\phi$ has a BRFP $y_0 i\in\partial\Hh$ then $f$ has a 1-dimensional real submanifold $\{(y_0 i+t^2,t)|t\in\R\}$ of BRFPs.
\end{claim}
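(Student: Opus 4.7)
The plan divides into three stages: identifying the Denjoy--Wolff point with its multiplier, producing a single BRFP, and then sweeping out the curve by a commuting group of automorphisms.

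\textbf{Stage 1 (Denjoy--Wolff point and multiplier at infinity).} An induction on $n$ gives the explicit iteration formula
\[
f^{\circ n}(z, w) = \bigl(\phi^{\circ n}(z - w^2) + w^2,\, w\bigr).
\]
Since $\phi$ has Denjoy--Wolff point $\infty$, $\phi^{\circ n}(u) \to \infty$ uniformly on compact subsets of $\Hh$, so $f^{\circ n}(z, w) \to \infty$ in $\Hh^2$ (equivalently to $(1,0) \in \partial\B^2$), showing that $\infty$ is the Denjoy--Wolff point of $f$. For the multiplier I use the Siegel-form Julia condition $f(H(ct)) \subseteq H(t)$ with $H(t) = \{\rea z - |w|^2 > t\}$. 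Writing $u = z - w^2$, a short computation gives $\rea u = \rea z - |w|^2 + 2(\ima w)^2$ and $\rea(\phi(u) + w^2) - |w|^2 = \rea\phi(u) - 2(\ima w)^2$. If $Z \in H(c_\phi t)$, then $\rea u > c_\phi t + 2(\ima w)^2 = c_\phi\bigl(t + 2(\ima w)^2/c_\phi\bigr)$, so 1D Julia for $\phi$ at $\infty$ yields $\rea\phi(u) > t + 2(\ima w)^2/c_\phi$, and $c_\phi \leq 1$ then gives $\rea\phi(u) - 2(\ima w)^2 \geq t$. Hence $c_f \leq c_\phi$. The reverse inequality comes from restricting $f$ to the slice $\{w = 0\}$, on which $f$ coincides with $\phi$, forcing $c_f \geq c_\phi$. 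Thus $c_f = c_\phi$ and $f$ inherits the type of $\phi$.

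\textbf{Stage 2 (A single BRFP at $(iy_0, 0)$).} I reduce to $y_0 = 0$ via the $\Hh^2$-automorphism $T(z, w) = (z - iy_0, w)$: the conjugate $\tilde f = T \circ f \circ T^{-1}$ has the same form with $\tilde\phi(u) = \phi(u + iy_0) - iy_0$, whose BRFP on $\partial\Hh$ is now $0$. To show $(0, 0)$ is a BRFP of $\tilde f$, I construct a backward-iteration sequence converging to it with bounded pseudo-hyperbolic step and apply Theorem \ref{thm:main}. The one-dimensional analogue of Lemma \ref{lemma:mainlemma} from \cite{PPC1} (noting that in 1D, BRFPs with bounded multiplier are automatically isolated by Cowen--Pommerenke) produces a backward sequence $\{u_n\} \subset \Hh$ for $\tilde\phi$ with bounded step converging to $0$. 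Since the slice $\{w = 0\} \subset \Hh^2$ is a pseudo-hyperbolic isometric embedding of $\Hh$, the lift $Z_n := (u_n, 0)$ is a backward-iteration sequence for $\tilde f$ with the same bounded step, so Theorem \ref{thm:main} identifies $(0, 0)$ as a BRFP of $\tilde f$.

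\textbf{Stage 3 (Sweeping out the curve).} I use the Heisenberg translations $h_t(z, w) = (z + 2tw + t^2, w + t)$ for $t \in \R$. Each $h_t$ is an automorphism of $\Hh^2$ (it preserves $\rea z - |w|^2$), and a direct check shows $\tilde f \circ h_t = h_t \circ \tilde f$: both sides equal $(\tilde\phi(z - w^2) + (w + t)^2, w + t)$ via the cancellation $z + 2tw + t^2 - (w + t)^2 = z - w^2$. Applying $h_t$ to the sequence $\{Z_n\}$ from Stage 2 yields a backward-iteration sequence for $\tilde f$ with the same bounded step (automorphisms are pseudo-hyperbolic isometries), converging to $h_t(0, 0) = (t^2, t)$. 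Theorem \ref{thm:main} then identifies $(t^2, t)$ as a BRFP of $\tilde f$ for every real $t$. Undoing $T$ finally gives the curve $\{(iy_0 + t^2, t) : t \in \R\}$ of BRFPs of $f$. The main obstacle I anticipate is the bookkeeping in Stage 1 with the $2(\ima w)^2$ correction that appears when passing between $z$ and $u = z - w^2$; it only closes up thanks to $c_\phi \leq 1$.
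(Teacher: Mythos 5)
Stages~1 and~3 of your argument are sound, and Stage~1 in fact takes a cleaner route than the paper's own proof: the paper gets $c_f\le c_\phi$ by restricting the ball-form liminf to the slice $\{w=0\}$, while your global Julia argument with the $2(\ima w)^2$ bookkeeping and the observation $c_\phi\le 1$ is tidier. Stage~3's commutation $\tilde f\circ h_t=h_t\circ\tilde f$ is essentially the same Heisenberg-translation device the paper uses in Example~\ref{ex:quadpol} (there written $h_r$); applying it here gives a uniform explanation, which the paper's proof of this Claim does not actually spell out, of why every point of the curve is a BRFP with the same multiplier once $(iy_0,0)$ is shown to be one.

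The gap is in Stage~2. You invoke Theorem~\ref{thm:main} to conclude that the limit of the lifted backward-iteration sequence is a BRFP, but that theorem is proved only for hyperbolic and elliptic self-maps, whereas the Example explicitly allows $\phi$ to be parabolic, in which case $f$ is parabolic by your own Stage~1 (indeed, convergence of backward-iteration sequences for parabolic maps of the ball is one of the open questions at the end of the paper). Thus the step ``Theorem~\ref{thm:main} identifies $(0,0)$ as a BRFP'' does not cover half of the stated hypothesis, and it is also heavier machinery than the problem requires. A direct argument both repairs and shortens it: since $\tilde\phi$ has a BRFP at $0$ with multiplier $\alpha_\phi\in(1,\infty)$, the liminf in (\ref{multalpha}) for $\tilde f$ at $(0,0)$ taken along the totally geodesic slice $\{w=0\}$ equals $\alpha_\phi<\infty$, and the full liminf is no larger, so Theorem~\ref{thm:JuliaN} yields (\ref{Juliaalpha}) at $(0,0)$ for some $\alpha\le\alpha_\phi$; conversely, restricting (\ref{Juliaalpha}) to that slice (horospheres at $(0,0)$ meet it in the one-dimensional horocycles at $0$, and $f$ acts there as $\tilde\phi$) reproduces the one-dimensional Julia inequality for $\tilde\phi$ with the same constant, forcing $\alpha\ge\alpha_\phi$. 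Hence $\alpha=\alpha_\phi>1$ and $(0,0)$ is a BRFP, with no appeal to backward iteration and no case split on type; your Stage~3 then propagates this along the whole curve exactly as written.
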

\begin{proof}
Iterates of $f$ have a form $f^{\circ n}(z,w)=(\phi^{\circ n}(z-w^2)+w^2,w)$ and clearly the Denjoy-Wolff point is infinity. Assume $\phi$ has multiplier $c_1\leq 1$ at infinity, then $f$ has multiplier
\begin{align*}
c=&\liminf_{(z,w)\to\infty}\frac{\rea(\phi(z-w^2)+w^2)-|w|^2}{\rea z-|w|^2}\left|\frac{z+1}{\phi(z-w^2)+w^2+1}\right|^2\\
\leq &\liminf_{z\to\infty}\frac{\rea \phi(z)}{\rea z}\left|\frac{z+1}{\phi(z)+1}\right|^2=c_1.
\end{align*}
Since $f(z,0)=(\phi(z),0)$ and using Julia's lemma (\ref{JuliaH}), we  have 
\[
\rea\phi(z)\geq\frac{1}{c}\rea z\hspace{0.3 in}or\hspace{0.3 in}\frac{\rea\phi(z)}{\rea z}\geq\frac{1}{c}\hspace{0.3 in}\forall z\in\Hh,
\]
and, taking limit of both sides,
\[
\frac{1}{c_1}\geq\frac{1}{c}.
\]
Thus $c_1=c$, the multipliers coincide and therefore functions $f$ and $\phi$ are of the same type (either both hyperbolic or both parabolic).

Now $f(y_0 i+t^2,t)=(\phi(y_0 i)+t^2,t)=(y_0 i+t^2,t)$ and $(y_0 i+t^2,t)$ is a BRFP for $f$ $\forall t\in\R$.
\end{proof}
\end{example}
\end{subsection}

\begin{subsection}{Open questions.}

\begin{subsubsection}{The dimension of the stable set.} The {\sf stable set} $\cS$ at the BRFP $q$ is defined as the union of all backward-iteration sequences with bounded pseudo-hyperbolic step that tend to $q$. In one dimension, $\cS=\psi(\Hh)$. It is important to understand the properties of the stable set in N dimensions, because it may help to find the "best possible" intertwining map, i.e. the intertwining map whose image has the largest dimension.
\end{subsubsection}

\begin{subsubsection}{Non-isolated fixed points and necessary conditions for conjugation at BRFP}
As we can see from Remark \ref{rem:nonisol}, the condition on the BRFP to be isolated is sufficient, but not necessary. It is still not known if there are any BRFP for which the conjugation construction does not work. One needs to prove a result, similar to Lemma \ref{lemma:mainlemma} for non-isolated BRFP or to find necessary conditions on BRFP so that the conjugation construction will work.
\end{subsubsection}

\begin{subsubsection}{Convergence of backward-iteration sequences in parabolic case.}
Theorem \ref{thm:main} generalizes the one-dimensional Theorem \ref{thm:main1dim} only in hyperbolic and elliptic cases. It is still not known whether backward-iteration sequences with bounded step always converge for parabolic maps in higher dimensions. 
\end{subsubsection}
\end{subsection}

\end{section}
\bibliography{backwardbib}
\end{document}